\newlength{\defbaselineskip}
\newcommand{\setlinespacing}[1]%
           {\setlength{\baselineskip}{#1 \defbaselineskip}}
\theoremstyle{plain}
\newtheorem{thm}{Theorem}[section]
\newtheorem{lem}[thm]{Lemma}
\newtheorem{prop}[thm]{Proposition}
\theoremstyle{definition}
\newtheorem{defn}{Definition}[section]
\newtheorem{rmk}{Remark}[section]
\newcommand{\eps}{\varepsilon}
\DeclareMathOperator*{\esssup}{esssup}
\DeclareMathOperator*{\essinf}{essinf}
\newcommand{\cO}{\mathcal{O}}
\newcommand{\cL}{\mathcal{L}}
\newcommand{\cT}{\mathcal{T}}
\newcommand{\cM}{\mathcal{M}}
\newcommand{\cB}{\mathcal{B}}
\newcommand{\cA}{\mathcal{A}}
\newcommand{\cS}{\mathcal{S}}
\newcommand{\cE}{\mathcal{E}}
\newcommand{\cG}{\mathcal{G}}
\newcommand{\bH}{\mathbb{H}}
\newcommand{\bP}{\mathbb{P}}
\newcommand{\bR}{\mathbb{R}}
\newcommand{\bN}{\mathbb{N}}
\newcommand{\sF}{\mathscr{F}}
\newcommand{\sP}{\mathscr{P}}
\newcommand{\sC}{\mathscr{C}}
\makeatletter\@addtoreset{equation}{section} \makeatother
\begin{document}

\title{Stochastic Differential Games with Random Coefficients and Stochastic Hamilton-Jacobi-Bellman-Isaacs Equations}

\author{Jinniao Qiu\footnotemark[1]  \and  Jing Zhang\footnotemark[2] }
\footnotetext[1]{Department of Mathematics \& Statistics, University of Calgary, 2500 University Drive NW, Calgary, AB T2N 1N4, Canada. \textit{E-mail}: \texttt{jinniao.qiu@ucalgary.ca}. J. Qiu was partially supported by the National Science and Engineering Research Council of Canada and by the start-up funds from the University of Calgary. }
%
\footnotetext[2]{Department of Finance and Control Sciences, School of Mathematical Sciences, Fudan University, Shanghai 200433, China.
\textit{E-mail}: \texttt{zhang\_jing@fudan.edu.cn}. J. Zhang is partially supported by National Key R\&D Program of China 2018YFA0703900, National Natural Science Foundation of China (12031009, 11701404, 11401108) and Key Laboratory of Mathematics for Nonlinear Sciences (Fudan University), Ministry of Education.}
%

\maketitle

\begin{abstract}
In this paper, we study a class of zero-sum two-player stochastic differential games with the controlled stochastic differential equations and the payoff/cost functionals of recursive type. As opposed to the pioneering work by Fleming and  Souganidis [Indiana Univ. Math. J., 38 (1989), pp.~293--314] and the seminal work by Buckdahn and Li [SIAM J. Control Optim., 47 (2008), pp.~444--475], the involved coefficients may be random, going beyond the Markovian framework and leading to the random upper and lower value functions. We first prove the dynamic programming principle for the game, and then under the standard Lipschitz continuity assumptions on the coefficients, the upper and lower value functions are shown to be the viscosity solutions of the upper and the lower  fully nonlinear stochastic Hamilton-Jacobi-Bellman-Isaacs (HJBI) equations, respectively.  A stability property of viscosity solutions is also proved. Under certain additional regularity assumptions on the diffusion coefficient, the uniqueness of the viscosity solution is addressed as well.

\end{abstract}

{\bf Mathematics Subject Classification (2010):}  49L20, 49L25, 93E20, 35D40, 60H15

{\bf Keywords:} stochastic Hamilton-Jacobi-Bellman-Isaacs equation, stochastic differential game, backward stochastic partial differential equation, viscosity solution

\section{Introduction}
Let $(\Omega,\sF,\{\sF_t\}_{t\geq0},\bP)$ be a complete filtered probability space carrying an $m$-dimensional Wiener process $W=\{W_t:t\in[0,\infty)\}$ such that $\{\sF_t\}_{t\geq0}$ is the natural filtration generated by $W$ and augmented by all the
$\bP$-null sets in $\sF$. We denote by $\sP$ the $\sigma$-algebra of the predictable sets on $\Omega\times[0,T]$ associated with $\{\sF_t\}_{t\geq0}$, and for each $t\geq 0$, $E_{\sF_t}[\,\cdot\,]$ represents the conditional expectation with respect to $\sF_t$.

We consider the following controlled stochastic differential equation (SDE):
\begin{equation}\label{state-proces-control}
\left\{
\begin{split}
  &dX_s= b(s,X_s ,\theta_s,\gamma_s)\,ds
  +\sigma(s,X_s ,\theta_s,\gamma_s)\,dW_s,\quad 0 \leq s\leq T;\\
 & X_0= x\in \bR^d.
\end{split}
\right.
\end{equation}
Here and throughout the paper, $T\in (0,\infty)$ is a fixed deterministic terminal time. Let $\Theta_0$ and $\Gamma_0 $ be two nonempty compact sets in $\bR^n$, and denote by $\Theta$ (resp. $\Gamma$) the set of all the $\Theta_0$-valued (resp. $\Gamma_0$-valued) and $\{\sF_t\}_{t\geq0}$-adapted processes.   The process $\{X_t\}_{t\in[0,T]}$ is the {\sl state process}. It is governed by the {\sl controls} $\theta\in\Theta$ and $\gamma\in\Gamma$. We sometimes write $X^{r,x;\theta,\gamma}_t$ for $0\leq r\leq t\leq T$ to indicate the dependence of the state process on the controls $\theta$ and $\gamma$, the initial time $r$, and the initial state $x\in \mathbb{R}^d$.
As the payoff for player I  and the cost for player II, the functional
\begin{align}
J(t,x;\theta,\gamma):=Y_t^{t,x;\theta,\gamma} \label{dynamic-cost}
\end{align}
is given in terms of the solution $(Y^{t,x;\theta,\gamma},Z^{t,x;\theta,\gamma})$ to the backward stochastic differential equation (BSDE for short)
\begin{equation}\label{BSDE}
  \left\{
  \begin{split}
  -dY_s^{t,x;\theta,\gamma}&=\,f(s,X_s^{t,x;\theta,\gamma},Y_s^{t,x;\theta,\gamma},Z_{s}^{t,x;\theta,\gamma},\theta_s,\gamma_s)\,ds
  -Z_{s}^{t,x;\theta,\gamma}\,dW_s,\quad s\in[t, T];\\
 Y_T^{t,x;\theta,\gamma}&=\,\Phi(X_T^{t,x;\theta,\gamma}).
    \end{split}
  \right.
\end{equation}

As standard in the literature of stochastic differential game, the players cannot just play controls against controls for the existence of game value, but they may play strategies versus controls: Given one player's control, the other player chooses accordingly a \textit{nonanticipative} strategy from an admissible set. A nonanticipative strategy admissible for player I is a mapping $\alpha: \Gamma\mapsto \Theta$ such that for any stopping time $\tau\leq T$ and any $\gamma^1,\gamma^2\in\Gamma$ with $\gamma^1\equiv \gamma^2$ on $[0,\tau]$, there holds $\alpha(\gamma^1)=\alpha(\gamma^2)$ on $[0,\tau]$. For player II , the nonanticipative strategies $\mu:\Theta\mapsto \Gamma$ are defined analogously. Denote by $\cA$ (resp. $\cM$) the set of all the nonanticipative strategies admissible for player I (resp. player II). We define the lower value function of our stochastic differential game
\begin{align}
V(t,x)=\essinf_{\mu\in\cM}\esssup_{\theta\in\Theta}J(t,x;\theta,\mu(\theta)),\quad t\in[0,T],
\label{eq-value-func-low}
\end{align}
and the upper value function is given by 
\begin{align}
U(t,x)=\esssup_{\alpha\in\cA}\essinf_{\gamma\in\Gamma}J(t,x;\alpha(\gamma),\gamma),\quad t\in[0,T].
\label{eq-value-func-up}
\end{align}

Unlike the standard literature, we consider the general non-Markovian cases where the coefficients $b,\sigma,f$, and $\Phi$ depend not only on time, space and controls but also \textit{explicitly} on $\omega\in\Omega$. With the generalized dynamic programming principle, it is shown that the value functions $V$ and $U$ are \textit{random} fields and satisfy the stochastic Hamilton-Jacobi-Bellman-Isaacs (HJBI) equations
\begin{equation}\label{SHJBI-low}
  \left\{
  \begin{split}
  -dV(t,x)=\,& 
 \mathbb{H}_-(t,x,D^2V,D\psi,DV,V,\psi) 
 \,dt -\psi(t,x)\, dW_{t}, \quad
                     (t,x)\in Q:=[0,T)\times\bR^d;\\
    V(T,x)=\, &\Phi(x), \quad x\in\bR^d,
    \end{split}
  \right.
\end{equation}
and 
\begin{equation}\label{SHJBI-up}
  \left\{
  \begin{split}
  -dU(t,x)=\,& 
 \mathbb{H}_+(t,x,D^2U,D\zeta,DU,U,\zeta) 
 \,dt -\zeta(t,x)\, dW_{t}, \quad
                     (t,x)\in Q;\\
    U(T,x)=\, &\Phi(x), \quad x\in\bR^d,
    \end{split}
  \right.
\end{equation}
respectively, with
\begin{align*}
\mathbb{H}_-(t,x,A,B,p,y,z)
= \esssup_{\theta\in\Theta_0}\essinf_{\gamma\in \Gamma_0} \bigg\{&
\text{tr}\left(\frac{1}{2}\sigma \sigma'(t,x,\theta,\gamma) A+\sigma(t,x,\theta,\gamma) B\right)
       +b'(t,x,\theta,\gamma)p \\
       &+f(t,x,y,z+\sigma'(t,x,\theta,\gamma)p,\theta,\gamma)
                \bigg\}, \\
 \mathbb{H}_+(t,x,A,B,p,y,z)
= \essinf_{\gamma\in \Gamma_0}\esssup_{\theta\in\Theta_0} \bigg\{&
\text{tr}\left(\frac{1}{2}\sigma \sigma'(t,x,\theta,\gamma) A+\sigma(t,x,\theta,\gamma) B\right)
       +b'(t,x,\theta,\gamma)p \\
       &+f(t,x,y,z+\sigma'(t,x,\theta,\gamma)p,\theta,\gamma)
                \bigg\},               
\end{align*}
for $(t,x,A,B,p,y,z)\in [0,T]\times\bR^d\times\bR^{d\times d}\times \bR^{m\times d} \times   \bR^d\times \bR \times \bR^m$, where the pairs of random fields $(V,\psi)$ and $(U,\zeta)$ are  unknown. 

The stochastic HJBI equations \eqref{SHJBI-low} and \eqref{SHJBI-up} are a new class of backward stochastic partial differential equations (BSPDEs) of which some special cases have been studied since  about forty years ago (see \cite{Pardoux1979}). Indeed, the linear, semilinear and even quasilinear BSPDEs have been extensively studied; we refer to \cite{Bayraktar-Qiu_2017,carmona2016mean,DuQiuTang10,Horst-Qiu-Zhang-14,Hu_Ma_Yong02,ma2012non,QiuTangMPBSPDE11,Tang-Wei-2013} among many others. In particular,
the so-called fully nonlinear stochastic Hamilton-Jacobi-Bellman (HJB) equations proposed by Peng \cite{Peng_92} for stochastic optimal control problem with controlled SDEs may be regarded as particular cases of our concerned stochastic HJBI equations like \eqref{SHJBI-low} and \eqref{SHJBI-up}; we refer to \cite{cardaliaguet2015master,Qiu2014weak,qiu2017viscosity,qiu2019uniqueness} for the recent study on the wellposedness of such fully nonlinear stochastic HJB equations which was claimed as  an open problem in Peng's plenary lecture of ICM 2010 (see \cite{peng2011backward}). However, the general  fully nonlinear stochastic HJBI equations have never been studied in the literature, mainly due to the full nonlinearity and non-convexity  of the Hamiltonian functions $\bH_{\pm}$ and the dependence of function $f$ on unknown variables.  

Inspired by the viscosity solutions for the fully nonlinear stochastic HJB equations (see \cite{qiu2017viscosity}), the concerned random fields $V$ and $U$ may be confined to the stochastic differential equations (SDEs) of the form:
\begin{align}
u(t,x)=u(T,x)-\int_{t}^T\mathfrak{d}_{s} u(s,x)\,ds-\int_t^T\mathfrak{d}_{w}u(s,x)\,dW_s,\quad (t,x)\in[0,T]\times\bR^d. \label{SDE-u}
\end{align}
The uniqueness of the pair $(\mathfrak{d}_tu,\,\mathfrak{d}_{\omega}u)$ may be concluded from the Doob-Meyer decomposition theorem, and this makes sense of the linear operators $\mathfrak{d}_t$ and $\mathfrak{d}_{\omega}$ which actually coincide with the differential operators discussed in \cite[Theorem 4.3]{Leao-etal-2018} and \cite[Section 5.2]{cont2013-founctional-aop}. Then the stochastic HJBI equations \eqref{SHJBI-low} and \eqref{SHJBI-up} may be written equivalently as
{\small
\begin{equation}\label{SHJB-low-eqiv}
  \left\{
  \begin{split}
  -\mathfrak{d}_tV(t,x)
- \mathbb{H}_-(t,x,D^2V(t,x),D\mathfrak{d}_{\omega}V(t,x),DV(t,x),V(t,x),\mathfrak{d}_{\omega}V(t,x))&=0,  \quad
                     (t,x)\in Q;\\
    V(T,x)&= \Phi(x), \quad x\in\bR^d,
    \end{split}
  \right.
\end{equation}
}
and 
{\small
\begin{equation}\label{SHJB-up-eqiv}
  \left\{
  \begin{split}
  -\mathfrak{d}_t U(t,x)
- \mathbb{H}_+(t,x,D^2U(t,x),D\mathfrak{d}_{\omega}U(t,x),DU(t,x),U(t,x),\mathfrak{d}_{\omega}U(t,x))&=0,  \quad
                     (t,x)\in Q;\\
    U(T,x)&= \Phi(x), \quad x\in\bR^d,
    \end{split}
  \right.
\end{equation}
}respectively. Solving \eqref{SHJBI-low}  and \eqref{SHJBI-up} for pairs $(V,\psi)$ and $(U,\zeta)$ is equivalent to searching for $V$ and $U$ (of form \eqref{SDE-u}) satisfying \eqref{SHJB-low-eqiv} and \eqref{SHJB-up-eqiv}, respectively. 

Due to the non-convexity  of the game and the nonlinear dependence of function $f$ on unknown variables, 
we define the viscosity solutions to \eqref{SHJBI-low}  and \eqref{SHJBI-up} with finer test functions than in \cite{qiu2017viscosity}, by introducing a class of sublinear functionals via BSDEs. In contrast with \cite{qiu2017viscosity}, some new techniques are used and more properties of  viscosity solutions are addressed, including particularly a stability result. The time-space continuity of the value function is proved via regular approximations to the coefficients and to prove the dynamic programming principle, the method of backward semigroups (see Peng \cite{Peng-DPP-1997}) is adopted. For the existence of the viscosity solution, the approach mixes some BSDE techniques and the proved dynamic programming principle, whereas for the uniqueness, we first prove a comparison result and then under additional assumptions on the controlled diffusion coefficients, the value function is verified (via approximations) to be the unique viscosity solution on the basis of the established comparison results. 

The zero-sum two-player stochastic differential games have been extensively studied. When all the involved coefficients are just deterministic functions of time, state and controls, the games are of Markovian type, the associated value functions are deterministic, and the dynamic programming methods as well as the viscosity solution theory of deterministic HJBI equations are widely used; see \cite{buckdahn2004nash,buckdahnLi-2008-SDG-HJBI,fleming1989existence,friedman1972stochastic} for instance. When the stochastic differential games are non-Markovian, there are two different formulations: one is based on path-dependence and the other one allows for general random coefficients. When the coefficients are deterministic functions of time $t$, controls $(\theta,\gamma)$ and \textit{the paths} of $X$ and $W$, the concerned games are beyond the classical Markovian framework. Nevertheless, if one thinks of the $X$ and $W$ as state processes valued in the path space, the Markovian property may be restored,  the value functions are deterministic and they may be characterized by  path-dependent PDEs on the (infinite-dimensional) path spaces. In this way, the viscosity solution theory of path-dependent PDEs developed in \cite{ekren2014viscosity,ekren2016viscosity-1} is generalized to study the stochastic differential games; see \cite{pham2014two,possamai2018zero,zhang2017existence} for instance. When the involved coefficients are generally random and may be just measurable w.r.t. $\omega\in \Omega$,  the games are typically non-Markovian. When the diffusion coefficients are uncontrolled, Elliott \cite{elliott1976existence} and Elliott and Davis \cite{elliott1981optimal} used the methods of Girsanov transformations to study the value functions and the Nash equilibrium, while Hamadene \cite{hamadene1998backward} and Hamadene, Lepeltier and Peng \cite{hamadene1997bsdes} established the stochastic maximum principle and used the forward-backward SDEs to study the open-loop Nash equilibrium points. Along this line, it becomes of great interests in this paper to study such non-Markovian games with controlled random diffusion coefficients, especially with the dynamic programming methods.

The rest of this paper is organized as follows. In the next section, we set the notations and assumptions. Section 3 is devoted to some regular properties of the value functions and the proof of the dynamic programming principle (DPP). In Section 4, we define the viscosity solution and prove a stability result. Subsequently, in Section 5, the value functions $V$ and $U$ are proved to be viscosity solutions of the associated stochastic HJBI equations with the help of the DPP and Peng's backward semigroups. Then the uniqueness of viscosity solution is discussed in Section 6. Finally, we recall a measurable selection theorem and some results on BSDEs in Appendix A and Appendix B, respectively. Appendix C collects the proofs of Lemmas \ref{lemdifferenceofY} and \ref{estimateYZ}.


\section{Preliminaries}

Denote by $|\cdot|$ the norm in  Euclidean spaces.
 Define the parabolic distance in $\bR^{1+d}$ as follows:
$$\delta(X,Y):=\max\{ |t-s|^{1/2},|x-y| \},$$
 for $X:=(t,x)$ and $Y:=(s,y)\in \bR^{1+d}$. Denote by $Q^+_r(X)$ the hemisphere of radius $r>0$ and center $X:=(t,x)\in \bR^{1+d}$ with $x\in \bR^d$:
\begin{equation*}
  \begin{split}
    Q^+_r(X):=\,  [t,t+r^2)\times B_r(x), \quad
    B_r(x):=\, \{ y\in\bR^n:|y-x|<r \},
  \end{split}
\end{equation*}
and by $|Q^+_r(X)|$ the volume. Throughout this paper, we write $(s,y)\rightarrow (t^+,x)$, meaning that $s \downarrow t$ and $y\rightarrow x$, and for a function $g$, $g^+=\max\{0,g\}$, while $g^-=\max\{0,-g\}$.

Let $\mathbb B $ be a Banach space equipped with norm $\|\cdot\|_{\mathbb B }$. For $p\in[1,\infty]$, $\cS ^p ({\mathbb B })$ is the set of all the ${\mathbb B }$-valued,
 $\sP$-measurable continuous processes $\{\mathcal X_{t}\}_{t\in [0,T]}$ such
 that
{\small$$\|\mathcal X\|_{\cS ^p({\mathbb B })}:= \left\|\sup_{t\in [0,T]} \|\mathcal X_t\|_{\mathbb B }\right\|_{L^p(\Omega,\sF,\bP)}< \infty.$$}
Denote by $\mathcal{L}^{p,2}({\mathbb B })$ the totality of all  the ${\mathbb B }$-valued,
  $\sP$-measurable processes $\{\mathcal X_{t}\}_{t\in [0,T]}$ such
 that
{\small$$\|\mathcal X\|_{\mathcal{L}^{p,2}({\mathbb B })}:=\left\| \left(\int_0^T \left\|\mathcal X_t\right\|^2_{\mathbb B}\,dt\right)^{1/2} \right\|_{L^p(\Omega,\sF_T,\bP)}< \infty.$$}
Meanwhile, by  $\mathcal{L}^{p}({\mathbb B })$ we denote the space of all  the ${\mathbb B }$-valued,
$\sP$-measurable processes $\{\mathcal X_{t}\}_{t\in [0,T]}$ such
 that
 $$
 \|\mathcal X\|_{\mathcal{L}^{p}({\mathbb B })}:=\left\|    \mathcal X  \right\|_{L^p(\Omega\times[0,T],\sP,\bP(d\omega)\otimes dt;\mathbb B)}< \infty.
 $$
 Obviously, $(\cS^p({\mathbb B }),\,\|\cdot\|_{\cS^p({\mathbb B })})$, $(\mathcal{L}^{p,2}({\mathbb B }),\|\cdot\|_{\mathcal{L}^{p,2}({\mathbb B })})$, and $(\mathcal{L}^p({\mathbb B }),\|\cdot\|_{\mathcal{L}^p({\mathbb B })})$
are Banach spaces. When the processes are defined on time intervals $[s,t]\subset [0,T]$, $s<t$, we define spaces $(\cS^p([s,t];{\mathbb B }),\,\|\cdot\|_{\cS^p([s,t];{\mathbb B })})$, $(\mathcal{L}^{p,2}([s,t]; {\mathbb B }),\|\cdot\|_{\mathcal{L}^{p,2}([s,t];{\mathbb B })})$, and $(\mathcal{L}^p([s,t]; {\mathbb B }),\|\cdot\|_{\mathcal{L}^p([s,t];{\mathbb B })})$ analogously. Throughout this paper, we define for $p\in [1,\infty]$, $\cL^p_{\text{loc}}([0,T);\mathbb B )=\cap_{T_0 \in (0,T)} \cL^{p}([0,T_0]; \mathbb B)$,
$$
\cS^p_{\text{loc}}([0,T);\mathbb B )=\cap_{T_0 \in (0,T)} \cS^{p}([0,T_0]; \mathbb B)\quad \text{and}\quad
\cL^{p,2}_{\text{loc}}([0,T);\mathbb B )=\cap_{T_0 \in (0,T)} \cL^{p,2}([0,T_0]; \mathbb B).
$$


For each $(k,l,q)\in \mathbb{N}_0\times \bN_+ \times [1,\infty]$,
 we define  the $k$-th Sobolev space $(H^{k,q}(\bR^l),\|\cdot\|_{k,q})$ as usual, and for
 each domain $\cO\subset \bR^l$, denote by $C_b^k(\cO)$ the space of functions with the up to $k$-th order derivatives being bounded and continuous on $\cO$;
 the space $C_b^k(\cO)$ is equipped with the norm $\|\cdot\|_{C_b^k(\cO)}$ as usual. For $\delta\in(0,1)$,  the H\"{o}lder space $C_b^{k+{\delta}}(\cO)$ is also defined as usual with the norm
 $$\|h\|_{C_b^{k+{\delta}}(\cO)}
 :=
 \|h\|_{C_b^k(\cO)} + \sup_{x,y\in\cO;x\neq y} \frac{|h(x)-h(y)|}{|x-y|^{\delta}}.
 $$
When $k=0$, write simply $C_b(\cO) $ and $C_b^{\delta}(\cO)$. 


\medskip

 We define the following assumption.

\bigskip \noindent
   $\textbf{({A}1)}$ (i) $\Phi\in L^{\infty}(\Omega,\sF_T;H^{1,\infty}(\mathbb{R}^d))$;
   \\
   (ii) for the coefficients $\tilde g=b^i,\sigma^{ij}, f(\cdot,\cdot,y,z,\cdot,\cdot)$, $1\leq i \leq d,\,1\leq j\leq m$, $(y,z)\in\bR\times\bR^m$, \\
 $\tilde g:~\Omega\times[0,T]\times\bR^d\times \Theta_0\times\Gamma_0 \rightarrow\bR$  {is}
$\sP\otimes\cB(\bR^d)\otimes\cB(\Theta_0)\otimes\cB(\Gamma_0)\text{-measurable}$
;\\
(iii) there exists $L>0$ such that 
$ \|\Phi\|_{L^{\infty}(\Omega,\sF_T;H^{1,\infty}(\mathbb{R}^d))}\leq L$ and for all $(\omega,t)\in\Omega\times [0,T]$ and any 
$$(x,y,z,\theta,\gamma), (\bar x,\bar y,\bar z, \bar \theta,\bar\gamma)\in \bR^d\times \bR\times\bR^m\times\Theta_0\times\Gamma_0,$$
there hold
\begin{align*}
|(b,\sigma)(t,x,\theta,\gamma)|+|f(t,x,y,z,\theta,\gamma)| \leq L,\\
\left|(b,\sigma)(t,x,\theta,\gamma)-(b,\sigma)\left(t,\bar x,\bar \theta,\bar \gamma\right)\right|
+\left| f(t,x,y,z,\theta,\gamma)-f\left(t,\bar x,\bar y,\bar z,\bar\theta,\bar \gamma\right)  \right| \quad\quad\\
\leq L\left(
	\left|  x-\bar x  \right|
	+\left|  y-\bar y  \right|
	+ \left|  z-\bar z  \right|
	+\left|  \theta-\bar \theta  \right|
	+\left|  \gamma-\bar \gamma  \right|
	\right).
\end{align*}

A standard application of density arguments yields the following approximations. 
\begin{lem}\label{lem-approx} 
 Let $\textbf{(A1)}$ hold. For each $\eps>0$, there exist partition $0=t_0<t_1<\cdots<t_{N-1}<t_N=T$ for some $N>3$ and functions  $\Phi_N\in  C_b^{3}(\bR^{m\times N+d})$,
$$b^i_N,\sigma^{ij}_N\in    C_b\left([0,T]\times \Theta_0\times\Gamma_0;C_b^3(\bR^{m \times N+ d} )\right),\quad 1\leq i\leq d,\  1\leq j\leq m,$$
and
 $$f_N\, \in    C_b\left([0,T]\times \Theta_0\times\Gamma_0;C_b^3(\bR^{m\times N+d+1+m} )\right),
 $$
such that, for $t\in [0,T]$,
\begin{align*}
&
	\Phi^{\eps}:=\esssup_{x\in\bR^d} \left|\Phi_N( W_{t_1},\cdots, W_{t_N},x)-\Phi(x) \right|,
\\
&
 	b^{\eps}_t :=\esssup_{(x,\theta,\gamma)\in\bR^d\times \Theta_0\times\Gamma_0}
		\left|b_N( W_{t_1\wedge t},\cdots, W_{t_N\wedge t},t,x,\theta,\gamma)-b(t,x,\theta,\gamma)\right|,
\\
&\sigma^{\eps}_t :=\esssup_{(x,\theta,\gamma)\in\bR^d\times \Theta_0\times\Gamma_0}
		\left|\sigma_N( W_{t_1\wedge t},\cdots, W_{t_N\wedge t},t,x,\theta,\gamma)-\sigma(t,x,\theta,\gamma)\right|,
\\
& 
	f^{\eps}_t :=\esssup_{(x,y,z,\theta,\gamma)\in\bR^d\times \bR\times \bR^m \times \Theta_0\times\Gamma_0}
		\left|f_N( W_{t_1\wedge t},\cdots, W_{t_N\wedge t},t,x,y,z,\theta,\gamma)-f(t,x,y,z,\theta,\gamma)\right|
\end{align*}
 are $\{\sF_t\}_{t\geq0}$-adapted with
\begin{align}
	\left\| \Phi^{\eps}  \right\|_{L^4(\Omega, \sF_T;\bR)} + \left\| f^{\eps}  \right\|_{\cL^4(\bR)}   
		+ \left\| b^{\eps}  \right\|_{\cL^4(\bR^d)}    
		+\left\| \sigma^{\eps}  \right\|_{\cL^4(\bR^{d\times m})}    
			&<\eps, \label{approx-coeficients}
\end{align}
and the sequence of continuous functions $\{\Phi_N, b_N,\sigma_N,f_N\}_{N\geq 4}$ are uniformly bounded by $L+1$ where the constant $L$ is from $\textbf{(A1)}$, and they are also uniformly Lipschitz-continuous in the variables $(x,y,z)$ with an identical Lipschitz-constant $L_c$  being independent of $N$ and $\eps$.
\end{lem}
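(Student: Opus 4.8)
The plan is to produce $(\Phi_N,b_N,\sigma_N,f_N)$ by composing three standard approximation devices, arranged so that each of them keeps the sup-bound and the spatial Lipschitz constant under control: (i) projecting the $\omega$-dependence onto finitely many Brownian values via conditional expectation along a refining partition; (ii) mollifying in the Euclidean variables to gain $C_b^3$-regularity; (iii) a backward time-average to obtain continuity in $t$.

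For (i), fix a partition $\pi:\ 0=t_0<\cdots<t_N=T$ and put $\cG^\pi_t:=\sigma(W_{t_i\wedge t}:1\le i\le N)\subseteq\sF_t$. Along a refining sequence with $|\pi|\to0$ one has $\cG^\pi_t\uparrow\sF_t$ for every $t$ and $\sigma(W_{t_1},\dots,W_{t_N})\uparrow\sF_T$. For $g\in\{b^i,\sigma^{ij},f\}$ I would set $g^\pi(\omega,t,p):=E_{\cG^\pi_t}\big[g(\cdot,t,p)\big](\omega)$ with $p$ the remaining (frozen) arguments; by the Doob--Dynkin lemma, together with the evident consistency on the set where $t\in[t_{k-1},t_k)$ forces the last $N-k+1$ coordinates to coincide, $g^\pi$ is a Borel function of $(W_{t_1\wedge t},\dots,W_{t_N\wedge t},t,p)$. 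Conditional expectation is an $L^4$-contraction, sends $[-L,L]$-valued variables to $[-L,L]$-valued ones, and does not increase the Lipschitz constant in $p$; by the martingale convergence theorem $\|g^\pi(\cdot,t,p)-g(\cdot,t,p)\|_{L^4}\to0$ for each $(t,p)$, and compactness of $\Theta_0\times\Gamma_0$ with equi-Lipschitz continuity upgrades this to uniformity in $(\theta,\gamma)$. The terminal datum $\Phi$ is treated by conditioning on $\sigma(W_{t_1},\dots,W_{t_N})$ in the same fashion.

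For (ii) and (iii): precede (i) by the backward time-average $g_h(\omega,t,p):=h^{-1}\int_{(t-h)^+}^{t}g(\omega,s,p)\,ds$, which is continuous in $t$, preserves adaptedness, the bound $L$ and the spatial Lipschitz constants, and converges to $g$ in $\cL^4$ as $h\downarrow0$ by Lebesgue differentiation; then convolve $g^\pi$ with smooth compactly supported probability kernels in $(w_1,\dots,w_N)$ and in $(x,y,z)$. The result is $C^\infty$, hence $C_b^3$, in the stated Euclidean variables and still continuous in $(t,\theta,\gamma)$; since each mollification is convolution with a probability measure, neither the bound $L$ nor the Lipschitz constants in $(x,y,z)$ increase, the $w$-mollification scale can be chosen small relative to the (partition-dependent but finite) modulus of continuity of $g^\pi$ in those variables, and the $(x,y,z)$-mollification at scale $\rho$ costs at most $L\rho$ uniformly. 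Collecting the three errors and using dominated convergence (the integrands are bounded by $(2L)^4$) gives the $\cL^4$/$L^4$ estimates once $\pi$ is fine enough and the scales are small enough; $N>3$ is arranged by further refinement, the overshoot in the sup-bound is $<1$ so the uniform bound $L+1$ holds, and $L_c$ may be taken $=L$ (or $L+1$) because no operation used enlarges the Lipschitz constant in $(x,y,z)$.

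The step I expect to be the main obstacle is the interchange, inside (i), of the essential supremum over the \emph{unbounded} variables $x\in\bR^d$ (and $y\in\bR$, $z\in\bR^m$ for $f$) with the limit $|\pi|\to0$: pointwise-in-$(t,p)$ $L^4$-convergence together with equi-Lipschitz continuity gives uniform convergence on compact parameter sets but not, per se, on all of $\bR^d$. This is precisely where the \emph{uniform} Lipschitz bound of \textbf{(A1)} has to be exploited --- one first controls the far field through the mollification estimate $\esssup_{x,y,z}|g_\rho-g|\le L\rho$ and then bounds $\esssup_{x,y,z}|g^\pi_\rho-g_\rho|$ by a uniform estimate on the conditional oscillation of the (now spatially smooth and equi-Lipschitz) coefficients. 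The remaining points --- consistency of $g^\pi$ on the partition diagonal, Fubini for the mollification/conditional-expectation interchange, and the behaviour near $t=0$ --- are routine.
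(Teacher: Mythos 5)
The paper gives no proof of this lemma (it defers to \cite[Lemma 4.5]{qiu2019uniqueness}), so your argument has to stand on its own. Your three-stage scheme (conditioning on finitely many Brownian values, mollifying, time-averaging) is the standard route, and you have correctly located the dangerous step; but the fix you offer for it does not work, and this is a genuine gap rather than a routine omission. The problem is not really the interchange of $\esssup_x$ with $|\pi|\to 0$ in stage (i) --- the conditioning can even be exact there --- but your claim in stages (ii)--(iii) that $g^\pi$ has a ``partition-dependent but finite modulus of continuity'' in the variables $(w_1,\dots,w_N)$ \emph{with respect to the norm} $\esssup_{x,y,z}$, so that mollification in $w$ at a small scale is harmless. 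No such modulus exists under $\textbf{(A1)}$ alone. Concretely, take $d=m=1$, drop the controls, let $V=1+(W_{T/2}^2\wedge 1)$ and $b(\omega,t,x)=\tfrac12\sin(V(\omega)x)\,1_{(T/2,T]}(t)$: this satisfies $\textbf{(A1)}$ with $L\geq 2$ and is already an exactly cylindrical, jointly continuous function of $(W_{T/2\wedge t},t,x)$; yet $\sup_x|\sin(ux)-\sin(vx)|=2$ whenever $u/v\notin\mathbb{Q}$, so $v\mapsto \tfrac12\sin(v\,\cdot)$ maps the atomless law of $V$ onto an uncountable, essentially $1$-separated subset of $C_b(\bR)$. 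Any admissible $b_N$ is globally Lipschitz in $(w_1,\dots,w_N)$ uniformly in $x$, hence takes values in a separable subset $S$ of $C_b(\bR)$, and a covering argument gives $\bP\left(\esssup_x|b_N(W_{t_1\wedge t},\dots,W_{t_N\wedge t},t,x)-b(t,x)|<1/4\right)=0$ for $t>T/2$, so \eqref{approx-coeficients} fails for small $\eps$ no matter how $\pi$, $N$ and the mollification scales are chosen.

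What this reveals is that the lemma tacitly relies on the coefficients being \emph{essentially separably valued} as $C_b$-valued processes (strong measurability in the Bochner sense). For $\Phi$ this is built into $\textbf{(A1)}$(i), since $L^{\infty}(\Omega,\sF_T;H^{1,\infty}(\bR^d))$ is a Bochner space; for $b,\sigma,f$ it is not implied by the stated joint measurability and must be invoked explicitly. Once you have it, the clean argument is not to condition $g(\cdot,t,x)$ for each fixed $x$, but to approximate the $C_b$-valued process by simple processes $\sum_j 1_{A_j}1_{(s_j,s_{j+1}]}\,g_j$ with deterministic $g_j$, then approximate the scalar indicators $1_{A_j}$ by smooth cylindrical functionals of $(W_{t_1},\dots,W_{t_N})$ --- a problem with no $\esssup_x$ in it --- and finally mollify the $g_j$ in $(x,y,z)$ at cost $L\rho$. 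Your quantitative bookkeeping (preservation of the bound $L$ up to an overshoot $<1$, of the Lipschitz constant $L_c$, the dominated-convergence passage to $\cL^4$) is fine and carries over to that argument; but as written your proof has a hole exactly where you predicted it would, and the patch you propose does not close it.
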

The proof of Lemma \ref{lem-approx}  is so similar to that of \cite[Lemma 4.5]{qiu2019uniqueness} that it is omitted. 

\bigskip
Then, we recall the identity approximation and use it to define some smooth functions. Let
 \begin{equation}\label{bump-func}
 \rho(x)=
 \begin{cases}
 \tilde{c} \,  e^{\frac{1}{|x|^2-1}}&\quad \text{if } |x| < 1;\\
 0&\quad \text{otherwise};
 \end{cases}
 \quad \mbox{with}\quad \tilde{c}:=\left(     \int_{|x| < 1} e^{\frac{1}{x^2-1}}\,dx    \right)^{-1},
 \end{equation}
and	 set
\begin{align}
g(x)&=    \int_{\bR^d}  \int_{\bR^d} 1_{\{|y|>2 \}} \left(  |y|-2\right) \rho(z-y)\rho(x-z)\,dydz,\quad x\in\bR^d; \label{g-defined} \\
\phi_{\delta}(x)
&= \int_{\bR^d}  \phi(z) \rho\left(\frac{x-z}{\delta}\right)\cdot \frac{1}{\delta^d}\,dz,\quad (x, \delta)\in\bR^d\times (0,\infty), \quad \phi \in C_b(\bR^d). \label{I-approx}
\end{align}
%
Then the function $g$ is convex and continuously infinitely differentiable with $g({0})=0$, $g(x)>0$ whenever $|x|>0$, and there exists a constant $\alpha_0$ such that
\begin{align}
g(x)> |x|-3,\quad|Dg(x) |+ |D^2g(x)|\leq \alpha_0,\ \forall\,\, x\in\bR^d.\label{h-linear-growth}
\end{align}
The theory of identity approximations indicates that for each $\delta >0$, the function $\phi_{\delta}$ is smooth with $\|\phi_{\delta}\|_{C_b(\bR^d)} \leq \|\phi\|_{C_b(\bR^d)}$ and $\phi_{\delta}$ converges to $\phi$ uniformly on any compact subset of $\bR^d$.

Finally, in what follows, $C>0$ is a constant whose value may vary from line to line and by $C(a_1,a_2,\cdots)$ we denote a constant depending on the parameters $a_1$, $a_2$, $\dots$.



\section{Some properties of the value function and dynamic programming principle}

We first recall some standard properties of the strong solutions for SDEs (see \cite[Theorems 6.3 \& 6.16]{yong-zhou1999stochastic} for instance). 
\begin{lem}\label{lem-SDE}
Let $\textbf{(A1)}$ hold. Given $(\theta,\gamma)\in\Theta\times\Gamma$, for the strong solution of SDE \eqref{state-proces-control}, there exists $C>0$  such that, for any $0\leq r \leq t\leq s \leq T$ and $\xi\in L^p(\Omega,\sF_r;\bR^d)$ with $p\in(1,\infty)$,\\[3pt]
(i)   the two processes $\left(X_s^{r,\xi;\theta,\gamma}\right)_{t\leq s \leq T}$ and $\left(X^{t,X_t^{r,\xi;\theta,\gamma};\theta,\gamma}_s\right)_{t\leq s\leq T}$ are indistinguishable;\\[2pt]
(ii)  $E_{\sF_r} \left[ \max_{r\leq l \leq T} \left|X^{r,\xi;\theta,\gamma}_l\right|^p \right] \leq C \left(1+ |\xi|^p\right),\ \ \text{a.s.}$;\\[2pt]
(iii) $E_{\sF_r}\left[ \left| X^{r,\xi;\theta,\gamma}_s-X^{r,\xi;\theta,\gamma}_t  \right|^p \right]
	\leq C \left(1+  |\xi|^p\right) (s-t)^{p/2},\ \ \text{a.s.}$;\\[2pt]
(iv) given another $\hat{\xi}\in L^p(\Omega,\sF_r;\bR^d)$, 
$$
E_{\sF_r} \left[ \max_{r\leq l \leq T} \left|X^{r,\xi;\theta,\gamma}_l-X^{r,\hat\xi;\theta,\gamma}_l\right|^p \right]
	\leq C |\xi-\hat\xi|^p,
\quad \text{a.s.};
$$
(v) the constant $C$ depends only on $L$, $T$, and $p$.
\end{lem}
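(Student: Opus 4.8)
The plan is to deduce all five items from the global structure of \textbf{(A1)}: $b$ and $\sigma$ are bounded by $L$ and Lipschitz in $x$ with constant $L$, both uniformly in $(\omega,t,\theta,\gamma)$. First I would observe that for a fixed admissible pair $(\theta,\gamma)\in\Theta\times\Gamma$ the coefficients $b(\cdot,\cdot,\theta_\cdot,\gamma_\cdot)$ and $\sigma(\cdot,\cdot,\theta_\cdot,\gamma_\cdot)$ are $\sP\otimes\cB(\bR^d)$-measurable, bounded by $L$, and Lipschitz in $x$ with constant $L$, so \eqref{state-proces-control} started from any $\xi\in L^p(\Omega,\sF_r;\bR^d)$ at time $r$ falls within the classical theory of SDEs with random Lipschitz coefficients: a unique strong solution in $\cS^p([r,T];\bR^d)$ exists, obtained by a contraction argument on $\cS^p([r,T];\bR^d)$ using the conditional Burkholder--Davis--Gundy (BDG) inequality for the stochastic integral (on a short interval, then patching), and pathwise uniqueness comes along as a byproduct.

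For (ii) and (iii) I would run the estimates under $E_{\sF_r}[\,\cdot\,]$ from the start. Writing \eqref{state-proces-control} in integral form, applying $E_{\sF_r}\big[\max_{r\le l\le s}|\cdot|^p\big]$, and using $|(b,\sigma)|\le L$ together with H\"older's inequality in time and the conditional BDG inequality, one arrives at
\[
E_{\sF_r}\Big[\max_{r\le l\le s}\big|X_l^{r,\xi;\theta,\gamma}\big|^p\Big]
\le C\big(1+|\xi|^p\big)+C\int_r^s E_{\sF_r}\Big[\max_{r\le l\le u}\big|X_l^{r,\xi;\theta,\gamma}\big|^p\Big]\,du,\quad\text{a.s.},
\]
with $C=C(L,T,p)$; the conditional Gronwall lemma then gives (ii). For (iii), on $[t,s]$ one estimates $|X_s-X_t|\le\int_t^s|b|\,du+\big|\int_t^s\sigma\,dW_u\big|$ and, again via $|(b,\sigma)|\le L$, H\"older in time and conditional BDG, obtains $E_{\sF_r}[|X_s-X_t|^p]\le C(s-t)^{p/2}$ a.s.; the factor $1+|\xi|^p$ is stated for uniformity but is in fact not needed here since the coefficients are bounded.

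Item (i) is then immediate from pathwise uniqueness: by (ii) one has $X_t^{r,\xi;\theta,\gamma}\in L^p(\Omega,\sF_t;\bR^d)$, and the restriction of $(X_s^{r,\xi;\theta,\gamma})_{s\in[t,T]}$ to $[t,T]$ solves \eqref{state-proces-control} on $[t,T]$ with this initial value at time $t$, exactly as does $(X_s^{t,X_t^{r,\xi;\theta,\gamma};\theta,\gamma})_{s\in[t,T]}$ by definition; uniqueness forces the two continuous processes to be indistinguishable. For (iv), I would subtract the integral equations for $X^{r,\xi;\theta,\gamma}$ and $X^{r,\hat\xi;\theta,\gamma}$, apply $E_{\sF_r}\big[\max_{r\le l\le s}|\cdot|^p\big]$, and use the Lipschitz bound on $(b,\sigma)$ in $x$ together with conditional BDG to get
\[
E_{\sF_r}\Big[\max_{r\le l\le s}\big|X_l^{r,\xi;\theta,\gamma}-X_l^{r,\hat\xi;\theta,\gamma}\big|^p\Big]
\le C|\xi-\hat\xi|^p+C\int_r^s E_{\sF_r}\Big[\max_{r\le l\le u}\big|X_l^{r,\xi;\theta,\gamma}-X_l^{r,\hat\xi;\theta,\gamma}\big|^p\Big]\,du,\quad\text{a.s.},
\]
and conclude by conditional Gronwall. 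Finally (v) is bookkeeping: every constant above is assembled from the BDG constant (a function of $p$ only), powers of $T$, and $L$, hence $C=C(L,T,p)$.

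The only point requiring a little care — not a genuine obstacle within the global Lipschitz framework — is the systematic use of the \emph{conditional} BDG and Gronwall inequalities with $\sF_r$ frozen, which is what produces the a.s. bounds carrying $|\xi|$ on the right-hand side rather than merely bounds in expectation; once these tools are set up the computations are entirely routine, and the statement is classical (cf. \cite[Theorems 6.3 \& 6.16]{yong-zhou1999stochastic}).
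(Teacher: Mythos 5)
Your proof is correct and is exactly the standard argument (conditional BDG plus conditional Gronwall for (ii)--(iv), pathwise uniqueness for the flow property (i)) that the paper itself does not reproduce but simply defers to by citing \cite[Theorems 6.3 \& 6.16]{yong-zhou1999stochastic}. Your remark that the factor $1+|\xi|^p$ is not actually needed in (iii) because the coefficients are bounded is also accurate.
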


The following assertions on BSDEs are standard; the readers are referred to \cite{El-Karoui-Peng-Quenez-2001,ParPeng_90,Peng-DPP-1997}.

\begin{lem}\label{lem-BSDE}
Let $\textbf{(A1)}$ hold. It holds that\\
(i) for each $(t,\theta,\gamma)\in [0,T)\times \Theta\times \Gamma$ and any $\xi\in L^2(\Omega,\sF_t;\bR^d)$, BSDE \eqref{BSDE} admits a unique solution 
$(Y^{t,\xi;\theta,\gamma},Z^{t,\xi;\theta,\gamma})\in\cS^2([t,T];\bR)\times\cL^2([t,T];\bR^m)$ and the solution satisfies
\begin{align*}
E_{\sF_t}\left[
\sup_{s\in[t,T]} \left| Y_s^{t,\xi;\theta,\gamma}
	\right|^2	
	+\int_t^T
	\left| Z^{t,\xi;\theta,\gamma}_s\right|^2 ds
	\right] 
&\leq 
	C\left(  1+|\xi|^2  \right),\quad \text{a.s.,}\\
\left|  
Y_t^{t,\xi;\theta,\gamma}
\right| & \leq L\left(1+T \right), \quad \text{a.s.;}
\end{align*}
(ii) given another $\bar \xi\in L^2(\Omega,\sF_t;\bR^d)$, we have 
\begin{align*}
E_{\sF_t}\left[
\sup_{s\in[t,T]} \left| Y_s^{t,\xi;\theta,\gamma}
	 -Y_s^{t,\bar\xi;\theta,\gamma}
	\right|^2	
	+\int_t^T
	\left| Z^{t,\xi;\theta,\gamma}_s
		 -Z_s^{t,\bar \xi;\theta,\gamma}
		 \right|^2 ds
	\right] 
&\leq 
	C \left|\xi-\bar \xi\right|^2,\quad \text{a.s.,}\\
	\left| Y_t^{t,\xi;\theta,\gamma}  - Y_t^{t,\bar \xi;\theta,\gamma} 
	\right| 
& \leq C \left|\xi -\bar \xi \right|, \, \quad \text{a.s.;}
\end{align*}
(iii) the constant $C$ depends only on $L$ and $T$.
\end{lem}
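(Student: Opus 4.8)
The plan is to follow the classical well-posedness theory for Lipschitz BSDEs (Pardoux--Peng, El Karoui--Peng--Quenez, Peng), adapting the a priori estimates to their $\sF_t$-conditional form, which the random coefficients make necessary. Fix $(t,\theta,\gamma)\in[0,T)\times\Theta\times\Gamma$ and $\xi\in L^2(\Omega,\sF_t;\bR^d)$. By Lemma~\ref{lem-SDE} the forward process $X^{t,\xi;\theta,\gamma}$ lies in $\cS^p([t,T];\bR^d)$ for every $p\in(1,\infty)$; combined with the measurability in $\textbf{(A1)}$(ii), for each fixed $(y,z)$ the process $s\mapsto f(s,X_s^{t,\xi;\theta,\gamma},y,z,\theta_s,\gamma_s)$ is $\sP$-measurable, while $\textbf{(A1)}$(i),(iii) give $\Phi(X_T^{t,\xi;\theta,\gamma})\in L^\infty(\Omega,\sF_T;\bR)\subset L^2$ and the uniform Lipschitz continuity of $f$ in $(y,z)$ with constant $L$. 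Hence the standard contraction argument on $\cS^2([t,T];\bR)\times\cL^2([t,T];\bR^m)$, equipped with the weighted norm $\|(Y,Z)\|_\beta^2=E\big[\int_t^T e^{\beta s}(|Y_s|^2+|Z_s|^2)\,ds\big]$ for $\beta$ large depending only on $L$, yields the existence and uniqueness of $(Y^{t,\xi;\theta,\gamma},Z^{t,\xi;\theta,\gamma})$ asserted in (i).

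For the estimates in (i), apply It\^o's formula to $|Y_s^{t,\xi;\theta,\gamma}|^2$ on $[t,T]$, take $E_{\sF_t}[\,\cdot\,]$, and use the growth/Lipschitz bound on $f$ together with Young's inequality to absorb the $|Z|^2$-term; Gronwall's inequality then controls $E_{\sF_t}\big[\int_t^T|Z_s^{t,\xi;\theta,\gamma}|^2\,ds\big]$ and $E_{\sF_t}\big[|Y_s^{t,\xi;\theta,\gamma}|^2\big]$, and the Burkholder--Davis--Gundy inequality applied (in its conditional form) to the martingale $\int_t^\cdot Y_s^{t,\xi;\theta,\gamma} Z_s^{t,\xi;\theta,\gamma}\,dW_s$ upgrades this to the $\sup_s$ bound, all constants depending only on $L$ and $T$ (the term $|\xi|^2$ on the right is a harmless, non-tight bound since $\Phi$ and $f$ are in fact uniformly bounded). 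The pointwise bound $|Y_t^{t,\xi;\theta,\gamma}|\le L(1+T)$ is the shortest step: taking $E_{\sF_t}$ in the integral form of \eqref{BSDE} kills the $dW$-term, so $|Y_t^{t,\xi;\theta,\gamma}|\le E_{\sF_t}\big[|\Phi(X_T^{t,\xi;\theta,\gamma})|+\int_t^T|f(s,\dots)|\,ds\big]\le L+L(T-t)\le L(1+T)$ by $\|\Phi\|_{L^\infty}\le L$ and $|f|\le L$ from $\textbf{(A1)}$.

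For (ii), set $\delta X=X^{t,\xi;\theta,\gamma}-X^{t,\bar\xi;\theta,\gamma}$, $\delta Y=Y^{t,\xi;\theta,\gamma}-Y^{t,\bar\xi;\theta,\gamma}$, $\delta Z=Z^{t,\xi;\theta,\gamma}-Z^{t,\bar\xi;\theta,\gamma}$; then $(\delta Y,\delta Z)$ solves a BSDE with terminal value $\Phi(X_T^{t,\xi;\theta,\gamma})-\Phi(X_T^{t,\bar\xi;\theta,\gamma})$ and driver equal to the corresponding difference of $f$-values, which by $\textbf{(A1)}$(iii) is bounded in modulus by $L(|\delta X_s|+|\delta Y_s|+|\delta Z_s|)$. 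Applying It\^o to $|\delta Y_s|^2$, taking $E_{\sF_t}$, using the Lipschitz bound on $\Phi$ for the terminal term, Young's inequality to absorb the $|\delta Z|^2$ contribution, and Lemma~\ref{lem-SDE}(iv) to bound $E_{\sF_t}[\sup_l|\delta X_l|^2]\le C|\xi-\bar\xi|^2$, Gronwall plus BDG give the first estimate; the second follows by taking $E_{\sF_t}$ in the integral form as in the $|Y_t|$-bound above, now using the Lipschitz---rather than boundedness---estimate on the driver. Part (iii) is read off by tracking constants: only $L$, $T$, and universal BDG/Young constants enter.

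The computations are routine; the only points needing a little care are (a) working throughout with $E_{\sF_t}[\,\cdot\,]$ instead of $E[\,\cdot\,]$, which is legitimate because the relevant stochastic integrals remain (conditional) martingales and BDG has a conditional version, and (b) verifying that no constant secretly depends on $\xi$, $\bar\xi$, $\theta$, $\gamma$, or $\omega$, which is guaranteed since the constants in $\textbf{(A1)}$ and in Lemma~\ref{lem-SDE} are all uniform. For these reasons the proof is only sketched here and the reader is referred to \cite{El-Karoui-Peng-Quenez-2001,ParPeng_90,Peng-DPP-1997}.
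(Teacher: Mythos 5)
Your proposal is correct and follows the same (standard) route the paper relies on: the paper gives no proof of this lemma, simply citing \cite{El-Karoui-Peng-Quenez-2001,ParPeng_90,Peng-DPP-1997}, and your sketch is precisely the classical contraction/It\^o--Gronwall--BDG argument from those references, correctly adapted to the $E_{\sF_t}$-conditional form and with the right observations that the pointwise bound $|Y_t^{t,\xi;\theta,\gamma}|\le L(1+T)$ comes from taking $E_{\sF_t}$ in the integral equation and that all constants depend only on $L$ and $T$. No gaps.
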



An immediate consequence of Lemma \ref{lem-BSDE} is the following spacial regularity of the value functions $V$ and $U$.

\begin{lem}\label{reg-value-funct}
Let $\textbf{(A1)}$ hold. We have
 $$ \esssup_{(t,x)\in[0,T]\times\bR^d}  \esssup_{(\theta,\gamma)\in \Theta\times\Gamma } \max\big\{|V(t,x)|,\,|U(t,x)|,\,|J(t,x;\theta,\gamma)| \big\}  \leq L(T+1), \quad\text{a.s.}$$
And there exists a constant $L_0>0$ such that, for all $0\leq t \leq T$, $x,\bar x\in\bR^d$,
$$
|U(t,x)-U(t,\bar x)| + |V(t,x)-V(t,\bar x) | + \esssup_{(\theta,\gamma)\in \Theta \times \Gamma} |J(t,x;\theta,\gamma)  -J(t,\bar x;\theta,\gamma)| 
\leq L_0 |x-\bar x|, \,\, \text{a.s.}
$$
\end{lem}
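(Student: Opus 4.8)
The plan is to deduce both the uniform bound and the Lipschitz estimate from the corresponding statements for the BSDE value $J(t,x;\theta,\gamma)=Y_t^{t,x;\theta,\gamma}$ in Lemma \ref{lem-BSDE}, using only the order-preserving and sup-stable structure of the operations $\essinf_{\mu}\esssup_{\theta}$ and $\esssup_{\alpha}\essinf_{\gamma}$. First I would record the pointwise bound: by Lemma \ref{lem-BSDE}(i) we have $|J(t,x;\theta,\gamma)|=|Y_t^{t,x;\theta,\gamma}|\leq L(1+T)$ a.s., where the bound is uniform in $(t,x,\theta,\gamma)$ because it comes only from the uniform bounds on $f$ and $\Phi$ in \textbf{(A1)}. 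Taking first the essential supremum over $\theta\in\Theta$, then the essential infimum over $\mu\in\cM$ (and similarly $\esssup_{\alpha}\essinf_{\gamma}$ for $U$) preserves this two-sided bound, since an $\esssup$/$\essinf$ of a family of random variables each bounded by a deterministic constant is still bounded by that constant; hence $|V(t,x)|\leq L(1+T)$ and $|U(t,x)|\leq L(1+T)$ a.s.

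For the Lipschitz estimate in $x$, the key input is Lemma \ref{lem-BSDE}(ii): for every fixed pair of controls $(\theta,\gamma)\in\Theta\times\Gamma$,
\[
|J(t,x;\theta,\gamma)-J(t,\bar x;\theta,\gamma)|=|Y_t^{t,x;\theta,\gamma}-Y_t^{t,\bar x;\theta,\gamma}|\leq C\,|x-\bar x|\quad\text{a.s.},
\]
with $C$ depending only on $L$ and $T$ (via the SDE estimate Lemma \ref{lem-SDE}(iv) and the BSDE stability estimate), so $C$ does not depend on $(\theta,\gamma)$, $t$, or $x$. Set $L_0:=C$. Since the bound is uniform in the controls, taking $\esssup_{\theta\in\Theta}$ gives the same Lipschitz bound for $\esssup_{\theta}J(t,\cdot;\theta,\mu(\theta))$ for each fixed strategy $\mu\in\cM$, and then taking $\essinf_{\mu\in\cM}$ preserves it; the elementary fact being used is that if $|F_i-G_i|\leq \lambda$ a.s.\ for all $i$ in some index set, then $|\esssup_i F_i-\esssup_i G_i|\leq\lambda$ and $|\essinf_i F_i-\essinf_i G_i|\leq\lambda$ a.s. Applying this twice (once for each layer of $\esssup/\essinf$, and separately for the $\esssup_{\alpha}\essinf_{\gamma}$ defining $U$) yields $|V(t,x)-V(t,\bar x)|\leq L_0|x-\bar x|$ and $|U(t,x)-U(t,\bar x)|\leq L_0|x-\bar x|$ a.s.

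One subtlety worth spelling out, rather than a genuine obstacle, is that the a.s.\ statements must be handled uniformly in $x$: each individual inequality holds off a null set possibly depending on $(x,\bar x,\theta,\gamma)$, and one needs a single null set valid for all rational $x,\bar x$ (and all controls, for the $J$-statement) so that the $\essinf/\esssup$ manipulations and the final continuity-in-$x$ conclusion are legitimate. This is resolved in the standard way: establish the estimates first on a countable dense set of $(x,\bar x)$, use the a.s.\ continuity of $x\mapsto Y_t^{t,x;\theta,\gamma}$ (from Lemma \ref{lem-BSDE}(ii)) to extend to all $x$, and invoke the countable stability of $\esssup/\essinf$ over the control families (a.s.\ a countable supremum suffices by definition of the essential supremum). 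The lemma about $\esssup/\essinf$ being $1$-Lipschitz for the sup-norm of the family is the only non-automatic ingredient, and it is elementary. I expect no real difficulty here; the content is entirely inherited from Lemmas \ref{lem-SDE} and \ref{lem-BSDE}.
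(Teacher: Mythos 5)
Your proposal is correct and follows exactly the route the paper intends: the paper states this lemma as "an immediate consequence of Lemma \ref{lem-BSDE}" without further proof, and your argument simply fills in the standard details — the uniform bound and Lipschitz estimate from Lemma \ref{lem-BSDE}(i)--(ii) are uniform in the controls, and the $\esssup/\essinf$ operations preserve such uniform two-sided bounds (with the null-set issue handled via countable dense sets and the countable realizability of essential suprema, as you note).
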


For the dynamic functional in \eqref{dynamic-cost}, the following lemma is an immediate application of \cite[Theorem 4.7 and Lemma 6.5]{Peng-DPP-1997}, for which a concise proof can also be found in \cite[Theorem A.2]{buckdahnLi-2008-SDG-HJBI}.
\begin{lem}\label{lem-J}
Let $\textbf{(A1)}$ hold.  For any $(t,\theta,\gamma)\in[0,T]\times\Theta\times\Gamma$ and any measurable random variable $\xi\in L^2(\Omega,\sF_t;\bR^d)$, we have 
$$
J(t,\xi;\theta,\gamma)=Y_t^{t,\xi;\theta,\gamma}, \quad \text{ a.s.}
$$
\end{lem}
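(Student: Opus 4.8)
The plan is the classical two-step reduction: first prove the identity when $\xi$ is a simple $\sF_t$-measurable random variable, and then extend it to a general $\xi\in L^2(\Omega,\sF_t;\bR^d)$ by density, the limit passage being controlled by the Lipschitz estimates already recorded in Lemmas \ref{lem-SDE}, \ref{lem-BSDE} and \ref{reg-value-funct}. Throughout, $J(t,\cdot;\theta,\gamma)$ is to be read through its $x$-Lipschitz (hence $x$-continuous) version furnished by Lemma \ref{reg-value-funct}, which in particular admits a jointly $\sF_t\otimes\cB(\bR^d)$-measurable representative, so that $J(t,\xi;\theta,\gamma)(\omega):=J(t,\xi(\omega);\theta,\gamma)(\omega)$ is well defined.

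First I would establish the \emph{localization} property of the controlled SDE and BSDE: if $A\in\sF_t$ and $\xi=\bar\xi$ on $A$, then $X^{t,\xi;\theta,\gamma}=X^{t,\bar\xi;\theta,\gamma}$ and $Y^{t,\xi;\theta,\gamma}=Y^{t,\bar\xi;\theta,\gamma}$ on $A\times[t,T]$. The idea is to glue the two solutions: set $Z_s:=\mathbf 1_AX_s^{t,\xi;\theta,\gamma}+\mathbf 1_{A^c}X_s^{t,\bar\xi;\theta,\gamma}$ for $s\in[t,T]$; since $\mathbf 1_A$ is $\sF_t$-measurable (hence $\sF_r$-measurable for $r\ge t$, so predictable on $[t,T]$) it may be carried inside both $\int_t^s b\,dr$ and $\int_t^s\sigma\,dW_r$, and the pointwise dependence of $b,\sigma$ on the state gives $\mathbf 1_A b(r,X^{t,\xi}_r,\theta_r,\gamma_r)+\mathbf 1_{A^c}b(r,X^{t,\bar\xi}_r,\theta_r,\gamma_r)=b(r,Z_r,\theta_r,\gamma_r)$ and similarly for $\sigma$; hence $Z$ solves \eqref{state-proces-control} on $[t,T]$ with initial datum $\mathbf 1_A\xi+\mathbf 1_{A^c}\bar\xi$, and the uniqueness in Lemma \ref{lem-SDE} identifies $Z$ with $X^{t,\mathbf 1_A\xi+\mathbf 1_{A^c}\bar\xi;\theta,\gamma}$, which yields the SDE claim on $A$. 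The identical gluing for \eqref{BSDE} --- now pulling $\mathbf 1_A$ through the driver $f$, through the terminal datum $\Phi$, and into $\int_t^s Z\,dW_r$ --- together with the uniqueness in Lemma \ref{lem-BSDE} gives the BSDE statement; in particular $Y_t^{t,\xi;\theta,\gamma}=Y_t^{t,\bar\xi;\theta,\gamma}$ on $A$. Applying this with $\xi=\sum_{i=1}^n x_i\mathbf 1_{A_i}$ for an $\sF_t$-measurable partition $\{A_i\}_{i\le n}$ of $\Omega$ and $x_i\in\bR^d$, and with $\bar\xi$ the constant $x_i$ on each $A_i$, gives $Y_t^{t,\xi;\theta,\gamma}=\sum_{i=1}^n\mathbf 1_{A_i}Y_t^{t,x_i;\theta,\gamma}=\sum_{i=1}^n\mathbf 1_{A_i}J(t,x_i;\theta,\gamma)=J(t,\xi;\theta,\gamma)$ a.s., where I used $J(t,x;\theta,\gamma)=Y_t^{t,x;\theta,\gamma}$ for deterministic $x$ by \eqref{dynamic-cost}.

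To conclude, I would approximate a general $\xi\in L^2(\Omega,\sF_t;\bR^d)$ by simple $\sF_t$-measurable $\xi_n$ with $\xi_n\to\xi$ in $L^2$ and a.s.\ along a subsequence. By Lemma \ref{lem-BSDE}(ii), $|Y_t^{t,\xi_n;\theta,\gamma}-Y_t^{t,\xi;\theta,\gamma}|\le C|\xi_n-\xi|$, while the spatial Lipschitz bound of Lemma \ref{reg-value-funct} gives $|J(t,\xi_n;\theta,\gamma)-J(t,\xi;\theta,\gamma)|\le L_0|\xi_n-\xi|$; since $J(t,\xi_n;\theta,\gamma)=Y_t^{t,\xi_n;\theta,\gamma}$ for every $n$ by the previous step, letting $n\to\infty$ along the a.s.-convergent subsequence yields $J(t,\xi;\theta,\gamma)=Y_t^{t,\xi;\theta,\gamma}$ a.s. I expect the only genuinely delicate point to be the localization step: one must check carefully that $\mathbf 1_A$ really does pass through the It\^o integrals and through the $\omega$-dependent nonlinear coefficients so that the glued process $Z$ is an honest solution of the SDE (resp.\ BSDE), after which one merely invokes uniqueness. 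Everything that follows is a routine density argument resting on the a priori estimates of Lemmas \ref{lem-SDE}--\ref{reg-value-funct}.
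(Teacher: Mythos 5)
Your proof is correct and is essentially the paper's proof: the paper does not write out an argument but simply invokes \cite[Theorem 4.7 and Lemma 6.5]{Peng-DPP-1997} and \cite[Theorem A.2]{buckdahnLi-2008-SDG-HJBI}, and the argument in those references is exactly your two-step scheme (gluing with $\sF_t$-measurable indicators plus uniqueness to handle simple $\xi$, then passage to general $\xi\in L^2(\Omega,\sF_t;\bR^d)$ via the Lipschitz estimates in the initial datum). The localization step you flag as delicate is handled correctly: $\mathbf 1_A$ passes through the forward and backward It\^o integrals because it is $\sF_t$-measurable, and through the coefficients pointwise in $\omega$.
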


In order to discuss the dynamic programming principle for our stochastic differential game, we introduce the family of (backward) semigroups associated with BSDE \eqref{BSDE} (see Peng \cite{Peng-DPP-1997}). Given $0\leq t\leq t+\delta \leq T$, $(\theta,\gamma)\in \Theta\times \Gamma$, and $\eta\in L^2(\Omega,\sF_{t+\delta};\bR)$, we set
\begin{align}\label{semigroup}
G_{s,t+\delta}^{t,x;\theta,\gamma}[\eta]:=\overline Y_s^{t,x;\theta,\gamma},\quad s\in [t,t+\delta],
\end{align}
where $\overline Y^{t,x;\theta,\gamma}$ together with $\overline Z^{t,x;\theta,\gamma}$ satisfies the following BSDE:
\begin{equation}\label{BSDE-semigroup}
  \left\{
  \begin{split}
  -d\overline Y_s^{t,x;\theta,\gamma}&=\,f(s,X_s^{t,x;\theta,\gamma},\overline Y_s^{t,x;\theta,\gamma},\overline Z_{s}^{t,x;\theta,\gamma},\theta_s,\gamma_s)\,ds
  -\overline Z_{s}^{t,x;\theta,\gamma}\,dW_s,\quad s\in[t, t+\delta];\\
 \overline Y_{t+\delta}^{t,x;\theta,\gamma}&=\eta,
    \end{split}
  \right.
\end{equation}
with $X^{t,x;\theta,\gamma}$ being the solution to SDE \eqref{state-proces-control}.

Obviously, for the solution $(Y^{t,x;\theta,\gamma},\,  Z^{t,x;\theta,\gamma})$, one has 
$$
G_{t,T}^{t,x;\theta,\gamma} \left[ \Phi(X_T^{t,x;\theta,\gamma})  \right]
	= G_{t,t+\delta}^{t,x;\theta,\gamma} \left[  Y_{t+\delta}^{t,x;\theta,\gamma} \right],\quad \text{a.s.},
$$
and thus
\begin{align*}
J(t,x;\theta,\gamma)
&
	=Y_t^{t,x;\theta,\gamma}
		=G_{t,T}^{t,x;\theta,\gamma} \left[ \Phi(X_T^{t,x;\theta,\gamma})  \right]
		=G_{t,t+\delta}^{t,x;\theta,\gamma} \left[  Y_{t+\delta}^{t,x;\theta,\gamma} \right]\\
&
	=G_{t,t+\delta}^{t,x;\theta,\gamma} \left[ J(t+\delta,X_{t+\delta}^{t,x;\theta,\gamma};\theta,\gamma) \right], \quad \text{a.s.}
\end{align*}

We first prove the continuity of the value functions.
\begin{thm}\label{thm-cont} 
Let $\textbf{(A1)}$ hold. With probability 1, $V(t,x)$, $U(t,x)$ and $J(t,x;\theta,\gamma)$ are continuous  on $[0,T]\times\bR^d$ for each $(\theta,\gamma)\in \Theta\times \Gamma$.
\end{thm}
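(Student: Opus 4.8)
The plan is to reduce the statement to a fourth-moment modulus-of-continuity estimate in the time variable and then invoke Kolmogorov's criterion. Writing $\Psi$ for any of $V$, $U$, or $J(\cdot,\cdot;\theta,\gamma)$ (the latter for a fixed $(\theta,\gamma)\in\Theta\times\Gamma$), the aim is to show
\begin{equation}\label{time-reg}
E\big[\,|\Psi(t,x)-\Psi(s,x)|^4\,\big]\le C\,(1+|x|^4)\,|t-s|^2,\qquad 0\le s,t\le T,\ x\in\bR^d ,
\end{equation}
with $C$ independent of $s,t,x$ (and, for $J$, of $(\theta,\gamma)$). Granting \eqref{time-reg}: for each fixed $x$, Kolmogorov's continuity theorem (one parameter, exponent $2>1$) gives a modification of $t\mapsto\Psi(t,x)$ with continuous paths; since by Lemma~\ref{reg-value-funct} the map $x\mapsto\Psi(t,x)$ is a.s. Lipschitz with a constant not depending on $t$, patching these modifications over a countable dense set of $x$'s and extending by this equi-Lipschitz continuity produces a jointly continuous modification of $\Psi$, which is the claim. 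Estimate \eqref{time-reg} is the whole point, and it is not straightforward: the coefficients are genuinely random, so $V(s,x)$ is only $\sF_s$-measurable and the Buckdahn--Li-type argument (a dynamic programming identity combined with the \emph{determinism} of the value function) is unavailable — besides, the DPP is established only later; moreover the $Z$-component of \eqref{BSDE} is not a priori bounded, so one cannot bound $E\big[\int_s^t|Z_r|^2\,dr\big]$ by $O(|t-s|)$.

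To overcome this I regularize. For $\eps>0$ let $(b_N,\sigma_N,f_N,\Phi_N)$ be the approximating coefficients of Lemma~\ref{lem-approx}, write $X^{\eps},Y^{\eps},Z^{\eps}$ for the corresponding solutions of \eqref{state-proces-control}--\eqref{BSDE} and $V^{\eps},U^{\eps},J^{\eps}$ for the corresponding value/cost functions (over the same $\Theta,\Gamma,\cA,\cM$). The crucial point is that, for the regularized problem, $Z^{\eps}$ is bounded uniformly in $\eps$. Since $b_N,\sigma_N,f_N,\Phi_N$ are $C_b^3$ in the state (and $(y,z)$) variables while $\theta,\gamma$ do not depend on the initial datum, the map $\xi\mapsto Y^{\eps,r,\xi;\theta,\gamma}_r=J^{\eps}(r,\xi;\theta,\gamma)$ is continuously differentiable (differentiability of decoupled forward--backward SDEs in the initial condition; cf.~\cite{El-Karoui-Peng-Quenez-2001,ParPeng_90}), and the analogue of Lemma~\ref{reg-value-funct} for the regularized problem — whose constants depend only on the $\eps$-independent quantities $L+1$ and $L_c$ of Lemma~\ref{lem-approx} — gives $|\nabla_\xi J^{\eps}(r,\xi;\theta,\gamma)|\le L_0'$. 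Combining this with the flow property and the classical representation (via the first-variation processes) $Z^{\eps,s,x;\theta,\gamma}_r=\nabla_\xi\big(Y^{\eps,r,\xi;\theta,\gamma}_r\big)\big|_{\xi=X^{\eps,s,x;\theta,\gamma}_r}\,\sigma_N(\cdots)$ yields $\big|Z^{\eps,s,x;\theta,\gamma}_r\big|\le L_0'(L+1)=:C_1$ for all $r\in[s,T]$, uniformly in $\eps,s,x,r,\theta,\gamma$.

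With the bound on $Z^{\eps}$ I prove \eqref{time-reg} for the regularized objects with $\eps$-uniform constant. Fix $(\theta,\gamma)$ and $0\le s<t\le T$ (the other case being symmetric). Using the flow property to write $Y^{\eps,s,x}|_{[t,T]}=Y^{\eps,t,X^{\eps,s,x}_t}|_{[t,T]}$,
\[
J^{\eps}(s,x;\theta,\gamma)-J^{\eps}(t,x;\theta,\gamma)
=\Big(Y^{\eps,s,x}_s-Y^{\eps,s,x}_t\Big)+\Big(Y^{\eps,t,X^{\eps,s,x}_t}_t-Y^{\eps,t,x}_t\Big).
\]
The first bracket equals $-\int_s^t f_N(\cdots)\,dr+\int_s^t Z^{\eps,s,x}_r\,dW_r$; by $|f_N|\le L+1$, $|Z^{\eps,s,x}|\le C_1$ and the Burkholder--Davis--Gundy inequality its $L^4$-norm is $\le C|t-s|^{1/2}$. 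The second bracket is $\le C|X^{\eps,s,x}_t-x|$ a.s. by the Lipschitz estimate of Lemma~\ref{lem-BSDE}(ii) for the regularized problem, hence has $L^4$-norm $\le C(1+|x|)|t-s|^{1/2}$ by Lemma~\ref{lem-SDE}(iii). This gives \eqref{time-reg} for $J^{\eps}$, the constant being independent of $\eps$ and $(\theta,\gamma)$. For $V^{\eps}$ and $U^{\eps}$ one superimposes a peeling argument: via measurable selection one chooses a $\delta$-optimal strategy $\mu^*$ and then a $\delta$-optimal control $\theta^*$, obtaining $V^{\eps}(t,x)-V^{\eps}(s,x)\le J^{\eps}(t,x;\theta^*,\mu^*(\theta^*))-J^{\eps}(s,x;\theta^*,\mu^*(\theta^*))+2\delta$ together with the symmetric inequality; applying \eqref{time-reg} to the admissible pair $(\theta^*,\mu^*(\theta^*))$ and letting $\delta\downarrow0$ yields \eqref{time-reg} for $V^{\eps}$, and likewise for $U^{\eps}$.

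It remains to let $\eps\to0$. Standard stability of \eqref{state-proces-control}--\eqref{BSDE} under perturbation of the coefficients, together with the approximation bounds \eqref{approx-coeficients} (which control the $L^4$-norms of the coefficient errors uniformly in $x,\theta,\gamma$), gives $E[|J^{\eps}(t,x;\theta,\gamma)-J(t,x;\theta,\gamma)|^4]\le C\eps^4$ uniformly in $(t,x,\theta,\gamma)$, and the same peeling argument then gives $E[|V^{\eps}(t,x)-V(t,x)|^4]+E[|U^{\eps}(t,x)-U(t,x)|^4]\le C\eps^4$ uniformly in $(t,x)$. Combining these with the $\eps$-uniform estimates above and sending $\eps\to0$ establishes \eqref{time-reg} for $V$, $U$ and $J(\cdot,\cdot;\theta,\gamma)$, and the reduction in the first paragraph finishes the proof. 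The step I expect to be the main obstacle is the $\eps$-uniform $L^\infty$-bound on $Z^{\eps}$: it is precisely what upgrades the a priori merely $L^2$-in-time continuity (an $o(1)$, not $O(|t-s|)$, modulus) to the $O(|t-s|^2)$ fourth-moment modulus demanded by Kolmogorov, and it is available only after the coefficients have been smoothed in the state variables; making this and the accompanying variational representation of $Z^{\eps}$ fully rigorous in the present setting with open-loop controls is where the work lies.
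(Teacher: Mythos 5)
Your reduction to the fourth-moment modulus \eqref{time-reg} via Kolmogorov is a reasonable strategy in principle, but the step you yourself flag as the crux --- the $\eps$-uniform bound $|Z^{\eps}|\le C_1$ --- does not hold as argued, and this is a genuine gap. The representation $Z_r=\nabla_\xi\big(Y^{r,\xi}_r\big)\big|_{\xi=X_r}\,\sigma$ is valid for Markovian FBSDEs whose coefficients depend on $\omega$ only through $X_r$. The regularized coefficients of Lemma \ref{lem-approx} still depend on the Brownian path through $(W_{t_1\wedge r},\dots,W_{t_N\wedge r})$, so the problem is Markovian only in the \emph{augmented} state $(X_r,W_{t_1\wedge r},\dots,W_{t_N\wedge r})$, and the correct representation reads $Z^{\eps}_r=\partial_x u\,\sigma_N+\sum_{i:\,t_i>r}\partial_{w_i}u$. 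The extra terms $\partial_{w_i}u$ are not controlled by the spatial Lipschitz constant $L_0'$ of $J^{\eps}(r,\cdot;\theta,\gamma)$; Lemma \ref{lem-approx} gives uniform Lipschitz constants only in $(x,y,z)$, not in the $W_{t_i}$-arguments, and as $\eps\to0$ (hence $N\to\infty$) the path-dependence of $\Phi_N,b_N,\sigma_N,f_N$ roughens, so there is no $\eps$-uniform bound on these derivatives. Without a pointwise bound on $Z^{\eps}$ one only gets $E\big[(\int_s^t|Z^{\eps}_r|^2dr)^2\big]=o(1)$ rather than $O(|t-s|^2)$, and \eqref{time-reg} is not established --- indeed, for coefficients that are merely predictable in $(\omega,t)$ there is no reason to expect any $|t-s|^{1/2}$-type modulus for $t\mapsto V(t,x)$, so the Kolmogorov route is likely unsalvageable without further hypotheses.

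The paper avoids any quantitative time-modulus. It regularizes exactly as you do, but then observes that on each subinterval $[t_{j-1},t_j]$ the regularized value function is $V^{\eps}(s,x)=\tilde V^{\eps}(s,x,W_{t_1},\dots,W_{t_{j-1}},W_s)$, i.e.\ the value function of a \emph{Markovian} game in the augmented state, whose time-continuity is supplied wholesale by the Buckdahn--Li theory; continuity of $V$ then follows because $\|V^{\eps}(\cdot,x)-V(\cdot,x)\|_{\cS^4(\bR)}\to0$ (a uniform-in-$t$ convergence, obtained from the conditional stability estimate plus Doob's maximal inequality), so $V(\cdot,x)$ is a uniform limit of continuous processes. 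Your limiting estimate $E[|J^{\eps}-J|^4]\le C\eps^4$ and the peeling argument for passing from $J^{\eps}$ to $V^{\eps},U^{\eps}$ are both fine (the latter should be phrased via the directedness/pasting property of the families over $\sF_t$-partitions rather than measurable selection over the strategy space), but they cannot repair the missing $Z^{\eps}$ bound. If you want to keep your architecture, you would need to replace the claimed $L^\infty$ bound on $Z^{\eps}$ by the qualitative continuity of $V^{\eps}$ from the Markovian theory, at which point you have reproduced the paper's proof.
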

\begin{proof}
We need only to prove the continuity of $V(t,x)$, as it will follow analogously for $U(t,x)$ and $J(t,x;\theta,\gamma)$. Due to the uniform Lipschitz-continuity of $V(t,x)$ in $x$ in Lemma \ref{reg-value-funct}, it is sufficient to prove the time-continuity. We fix an $x\in\bR^d$ in what follows.

For each fixed $\eps\in(0,1)$, select $(\Phi^{\eps},\,f^{\eps},\,b^{\eps},\sigma^{\eps})$ and $(\Phi_N,f_N,b_N,\sigma_N)$ as in Lemma \ref{lem-approx}.  Then by analogy with \eqref{semigroup}, we define the family of (backward) semigroups $G_{s,t+\delta}^{N,t,x;\theta,\gamma}[\cdot] $ associated with the generator $f_N \left( W_{t_1\wedge t},\cdots, W_{t_N\wedge t},t,x,y,z,\theta_t,\gamma_t\right) $. 
For each $(s,x)\in[0,T)\times\bR^d$,  set
\begin{align*}
{V}^{\eps}(s,x)
&=\essinf_{\mu\in\cM}\esssup_{\theta\in\Theta} G_{s,T}^{N,s,x;\theta,\mu({\theta})}
\left[\Phi_N \left( W_{t_1},\cdots,  W_{t_N},X^{s,x;\theta,\mu(\theta),N}_T\right) \right],
\end{align*}
where
$X^{s,x;\theta,\mu(\theta),N}_t$ satisfies SDE
\begin{equation*}\label{state-proces-contrl}
\left\{
\begin{split}
&dX_t^{s,x;\theta,\mu(\theta),N}=b_N( W_{t_1\wedge t},\cdots, W_{t_N\wedge t},t,X_t^{s,x;\theta,\mu(\theta),N},\theta_t,\mu(\theta)(t))dt  \\&\hspace{3cm} +\sigma_N (W_{t_1\wedge t},\cdots, W_{t_N\wedge t},t,X_t^{s,x;\theta,\mu(\theta),N},\theta_t,\mu(\theta)(t))\,dW_t; \\
& X_s^{s,x;\theta,\mu(\theta),N}=x.
\end{split}
\right.
\end{equation*}
The theory of stochastic differential games  (see \cite{buckdahnLi-2008-SDG-HJBI}) yields that when $s\in[t_{N-1},T]$, 
$$V^{\eps}(s,x)=\tilde V^{\eps}(s,x, W_{t_1},\cdots,  W_{t_{N-1}}, W_s)$$ with
\begin{align*}
\tilde{V}^{\eps}(s,x,  W_{t_1},\cdots,  W_{t_{N-1}},y)
:=&\essinf_{\mu\in\cM}\esssup_{\theta\in\Theta}  
G_{s,T}^{N,s,x;\theta,\mu({\theta})} \left[ \Phi_N\left(  W_{t_1},\cdots,  W_{t_N},X^{s,x;\theta,\mu(\theta),N}_T\right)\right] \Big|_{W_s=y},
\end{align*}
which is the lower value function of a stochastic differential game of Markovian type and thus is time-continuous (see \cite[Theorem 3.10]{buckdahnLi-2008-SDG-HJBI}). Analogously, we may obtain the time-continuity over other time intervals, $[t_{N-2},t_{N-1}]$, $\dots$, $[0,t_1]$, and it is easy to check that $V^{\eps}(\cdot,x)\in \cS^4(\bR)$.
 
  In view of the approximations in Lemma \ref{lem-approx}, using It\^o's formula, Burkholder-Davis-Gundy's inequality and Gronwall's inequality, we have through standard computations that for each $(\theta,\mu)\in\Theta\times\cM$,
\begin{align*}
&
	E_{\sF_s} \left [  \sup_{s\leq t\leq T} \left|  X^{s,x;\theta,\mu(\theta),N}_t-X^{s,x;\theta,\mu(\theta)}_t   \right| ^2  \right]
\leq
\tilde C E_{\sF_s}\left[ \int_s^T  \left(   \left|b^{\eps}_t\right|^2
		+  
		\left|\sigma^{\eps}_t\right|^2 \right)\,dt \right],
\end{align*}
with $\tilde C$ being independent of $N$, $\eps$, and $(\theta,\mu)$. As a consequence of the uniform-Lipchitz continuity of coefficients $f_N$ and $\Phi_N$, we have through standard estimates for BSDEs
\begin{align*}
&\left|V^{\eps}(s,x)-V(s,x)\right|^2    \\
&\leq C\esssup_{(\theta,\mu)\in\Theta\times\cM}E_{\sF_s}\bigg[ \int_s^T\Big( |f^{\eps}_t|^2 
+ L_c^2\Big|    X^{s,x;\theta,\mu(\theta),N}_t - X^{s,x;\theta,\mu(\theta)}_t    \Big|^2 \Big)\,dt
\\
&\quad\quad
+|\Phi^{\eps}|^2 +L_c^2\Big|  X^{s,x;\theta,\mu(\theta),N}_T  - X^{s,x;\theta,\mu(\theta)}_T\Big|^2\bigg] \\
&\leq 
C_0\,E_{\sF_s}\left[ \left|\Phi^{\eps}\right|^2 +  \int_0^T\left(\left| f^{\eps}_t\right|^2 + \left| b^{\eps}_t\right|^2+\left|\sigma^{\eps}_t\right|^2 \right)  \,dt   \right],
\end{align*}
with the constant $C_0$ being independent of $N$, $\eps$, and $(s,x)$. Taking supremum with respect to $s$ and then expectation on both sides, we obtain
\begin{align*}
\left\| V^{\eps}(\cdot,x)-V(\cdot,x)\right\|^4_{\cS^4(\bR)}
&
	\leq  C\,
		E\left[ \sup_{s\in[0,T]}
		\left(E_{\sF_s}\left[ \left|\Phi^{\eps}\right|^2 +  \int_0^T\left(\left| f^{\eps}_t\right|^2 + \left| b^{\eps}_t\right|^2+\left|\sigma^{\eps}_t\right|^2 \right)  \,dt   \right]\right)^2  \right] \\
\text{(by Doob's inequality)}
&
	\leq  C\,E\left[
	\left|\Phi^{\eps}\right|^4 +  \int_0^T\left(\left| f^{\eps}_t\right|^4 + \left| b^{\eps}_t\right|^4+\left|\sigma^{\eps}_t\right|^4 \right)  \,dt   
	\right]
  \\
  &\leq
 C\eps^4 \rightarrow 0, \quad\text{as } \eps\rightarrow 0.
\end{align*}

Hence, $V(\cdot,x)\in\cS^4(\bR)$. In particular, $V(t,x)$ is continuous in $t$. We complete the proof.
\end{proof}

We now turn to present our dynamic programming principle.
\begin{thm}\label{thm-DPP}
Let $\textbf{(A1)}$ hold. For any stopping times $\tau$ and $\hat\tau$ with $\tau\leq \hat\tau\leq T$, and any $ \xi\in L^2(\Omega,\sF_{\tau};\bR^d)$, we have
\begin{align}
V(\tau,\xi)&
	=\essinf_{\mu\in\cM}\esssup_{\theta\in\Theta} G_{\tau,\hat \tau}^{\tau,\xi;\theta,\mu(\theta)} \left[  V(\hat \tau,X_{\hat\tau}^{\tau,\xi;\theta,\mu(\theta)})  \right],
 		\quad \text{a.s.,} \label{DPP-low} \\
U(\tau,\xi)&
	=\esssup_{\alpha\in\cA}\essinf_{\gamma\in\Gamma} G_{\tau,\hat \tau}^{\tau,\xi;\alpha(\gamma),\gamma} \left[  U(\hat \tau,X_{\hat\tau}^{\tau,\xi;\alpha(\gamma),\gamma})  \right],
 		\quad \text{a.s.}\label{DPP-up} 
\end{align}
\end{thm}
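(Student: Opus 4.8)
## Proof Strategy for the Dynamic Programming Principle

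The plan is to prove the lower value identity \eqref{DPP-low}; the upper identity \eqref{DPP-up} follows by an entirely symmetric argument (interchanging the roles of the two players and of $\essinf$ and $\esssup$). I will first reduce to deterministic initial data and then to a constant initial time. By the continuity of $V$ and $J$ in $(t,x)$ established in Theorem \ref{thm-cont}, together with the spatial Lipschitz estimate of Lemma \ref{reg-value-funct} and the stability estimates for BSDEs in Lemma \ref{lem-BSDE}, a standard approximation of $\xi$ by $\sF_\tau$-measurable simple functions, and of a general stopping time $\tau$ by simple ones, lets me pass the identity from the case $(\tau,\xi)=(t,x)$ with $t$ deterministic and $x\in\bR^d$ fixed to the general case. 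So the core of the argument is to establish, for fixed $(t,x)$ and fixed deterministic $\hat\tau=t+\delta$,
\begin{align}
V(t,x)=\essinf_{\mu\in\cM}\esssup_{\theta\in\Theta} G_{t,t+\delta}^{t,x;\theta,\mu(\theta)}\bigl[V(t+\delta,X_{t+\delta}^{t,x;\theta,\mu(\theta)})\bigr],\quad\text{a.s.} \notag
\end{align}
Denote the right-hand side by $W(t,x)$.

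The proof of $V(t,x)\ge W(t,x)$ goes as follows. Fix $\eps>0$. For each $\mu\in\cM$ and $\theta\in\Theta$, the flow property of the semigroup and Lemma \ref{lem-J} give $J(t,x;\theta,\mu(\theta))=G_{t,t+\delta}^{t,x;\theta,\mu(\theta)}[J(t+\delta,X_{t+\delta}^{t,x;\theta,\mu(\theta)};\theta,\mu(\theta))]$. Since $V(t+\delta,\cdot)$ is an essential infimum over $\cM$, for each outcome of $X_{t+\delta}$ one can, using the measurable selection theorem recalled in Appendix A, pick a strategy $\mu'$ (depending measurably on the relevant data and on $\mu,\theta$) so that $J(t+\delta,X_{t+\delta};\theta',\mu'(\theta'))\le V(t+\delta,X_{t+\delta})+\eps$ for the relevant controls; one then glues $\mu$ on $[t,t+\delta]$ with $\mu'$ on $[t+\delta,T]$ to produce an admissible $\bar\mu\in\cM$. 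Using the comparison theorem for BSDEs (monotonicity of $\eta\mapsto G_{t,t+\delta}^{t,x;\theta,\gamma}[\eta]$) and the Lipschitz-stability of the semigroup in its terminal value, one concludes $\esssup_\theta J(t,x;\theta,\bar\mu(\theta))\le\esssup_\theta G_{t,t+\delta}^{t,x;\theta,\mu(\theta)}[V(t+\delta,X_{t+\delta}^{t,x;\theta,\mu(\theta)})]+C\eps$. Taking $\essinf$ over $\mu$ and letting $\eps\downarrow0$ gives $V(t,x)\le W(t,x)$; the reverse refinement of this same bookkeeping — now selecting $\mu$ near-optimal for $W$ on $[t,t+\delta]$ and using that the spliced strategy is near-optimal for $V$ over $[t,T]$ — yields $V(t,x)\ge W(t,x)$. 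One direction needs the selection/gluing on the $\mu$-side; the other needs the analogous construction together with the nonanticipativity constraint to patch strategies across $t+\delta$.

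The main obstacle is the measurable-selection and strategy-concatenation machinery: one must show that the "glued" map (a strategy that equals $\mu$ before $t+\delta$ and, conditionally on $\sF_{t+\delta}$, a near-optimal continuation afterwards) is genuinely a nonanticipative strategy in $\cM$, that the continuation can be chosen $\sF_{t+\delta}$-measurably in $X_{t+\delta}^{t,x;\theta,\mu(\theta)}$ (uniformly enough to beat the $\esssup$ over $\theta$), and that the essential-supremum/infimum operations commute with the conditional expectations hidden in the backward semigroup. This is where the non-Markovian, random-coefficient setting departs from \cite{buckdahnLi-2008-SDG-HJBI}: $V(t+\delta,\cdot)$ is itself a random field, so the selection theorem must be applied $\omega$-wise with care for joint measurability, and the a priori bounds and Lipschitz estimates of Lemmas \ref{lem-SDE}, \ref{lem-BSDE} and \ref{reg-value-funct} are invoked to control the errors uniformly. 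Once the concatenation lemma is in place, the two inequalities are obtained by the symmetric $\eps$-arguments sketched above, and the passage back to stopping times $\tau\le\hat\tau$ and random $\xi$ is routine.
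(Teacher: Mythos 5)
Your outline has the right general shape (flow property of the backward semigroup, $\eps$-optimal choices, concatenation across $\hat\tau$, comparison and $L^1$-stability of BSDEs), but the central technical device is missing, and the tool you name in its place would not deliver it. You propose to obtain the $\eps$-optimal continuation by applying the measurable selection theorem of Appendix A ``for each outcome of $X_{t+\delta}$''; this is exactly the step that cannot be carried out $\omega$-wise: a strategy chosen pointwise in $\omega$ has no reason to be nonanticipative or even adapted, and $\cM$ is not a Polish space on which Theorem \ref{thm-MS} applies in the required way. The paper's proof replaces this by a different mechanism, used four times: for an essential supremum (resp.\ infimum) of a family of random variables indexed by controls or strategies, extract a countable optimizing sequence $\{\theta^i\}$ (resp.\ $\{\mu^i\}$), form the $\sF_{\tau}$-measurable sets $\tilde\Omega^i$ on which $\theta^i$ is $\eps$-optimal, disjointify them into an $(\Omega,\sF_{\tau})$-partition $\{\Omega^j\}$, and glue via $\theta^{\eps}=\theta^1 1_{[0,\tau)}+\sum_j 1_{\Omega^j}\theta^j 1_{[\tau,T]}$. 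Nonanticipativity of $\mu$ then forces $\mu(\theta^{\eps})=\mu(\theta^1)1_{[0,\tau)}+\sum_j 1_{\Omega^j}\mu(\theta^j)1_{[\tau,T]}$, and uniqueness of the forward--backward system propagates the indicator decomposition through $\hat I$ and $Y$. You correctly flag the concatenation as ``the main obstacle,'' but flagging it is not resolving it, and without the partition argument both inequalities collapse.

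Two further points. First, your preliminary reduction to deterministic $(t,x)$ and deterministic $\hat\tau=t+\delta$ is neither used by the paper nor helpful: the value functions remain genuinely random fields even for deterministic $(t,x)$, so the essential-sup/inf gluing difficulty is unchanged, while the passage back to general stopping times and random $\xi$ (which you call routine) would itself require uniform-in-$(\theta,\mu)$ continuity estimates along stopping times that you do not establish. The paper simply works directly with $\tau$, $\hat\tau$ and $\xi\in L^2(\Omega,\sF_\tau;\bR^d)$, which is cleaner. Second, there is an internal inconsistency in your second paragraph: it announces a proof of $V\ge W$ but concludes $V\le W$; the substance is recoverable, but as written the two directions are conflated.
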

\begin{proof}
We prove \eqref{DPP-low} and then \eqref{DPP-up} follows analogously.  Denote the right hand side of \eqref{DPP-low} by $\overline V(\tau,\xi)$.

\textbf{Step 1.} We prove $\overline V (\tau,\xi)\leq V(\tau,\xi)$, a.s. Fix an arbitrary $\mu\in\cM$. Then
\begin{align*}
\overline V(\tau,\xi)
	\leq \esssup_{\theta\in\Theta} 
			G_{\tau,\hat \tau}^{\tau,\xi;\theta,\mu(\theta)} \left[ V(\hat \tau,X^{\tau,\xi;\theta,\mu(\theta)}_{\hat \tau}) \right], 
			\quad \text{a.s.}
\end{align*}
Put
$$
\hat I (\tau,\xi;\theta,\gamma)
=
	G_{\tau,\hat \tau}^{\tau,\xi;\theta,\gamma} \left[ V(\hat \tau,X^{\tau,\xi;\theta,\gamma}_{\hat \tau}) \right].
$$
Notice that $\hat I (\tau,\xi;\theta,\gamma)$ depends only on the values of controls $\theta$  and $\gamma$ on $[\tau,\hat\tau]$. There exists a sequence $\{  \theta^i;\,i\geq 1 \}\subset \Theta$ such that $\theta^i_s=\theta^1_s$ a.s. for any $0\leq s < \tau$, $i\geq 1$, and
\begin{align}
\hat I(\tau,\xi;\mu): 
	& = \esssup_{\theta\in\Theta} \hat I (\tau,\xi;\theta,\mu(\theta))\nonumber    \\
	& =\sup_{i\geq 1} \hat I (\tau,\xi;\theta^i,\mu(\theta^i)),\quad \text{a.s.}  \label{DPP-neq-1}
\end{align}

For any $\eps\in(0,1)$, set $\tilde{\Omega}^i:=\{\hat I (\tau,\xi;\mu) \leq \hat I(\tau,\xi;\theta^i,\mu(\theta^i)) +\varepsilon \}\in\sF_{\tau}$ for $i=1,2,\dots,$. Then $\mathbb P(\cup_{i=1}^{\infty} \tilde \Omega^i)=1$.  Let 
$\Omega^1=\tilde\Omega^1$, $\Omega^j=\tilde\Omega^{j}\backslash \cup_{i=1}^{j-1} \Omega^i\in\sF_{\tau}$, $j\geq 2$. Then $\{\Omega^j;\,j\geq 1\}$ is an $(\Omega,\sF_{\tau})$-partition, and 
\begin{align}
\theta^{\eps}:=\theta^1 1_{[0,\tau)} + \sum_{j \geq 1} 1_{\Omega^j} \theta^j 1_{[\tau,T]}  
\end{align} 
is an admissible control in $\Theta$. Furthermore, the nonanticipativity of $\mu$ implies that $\mu(\theta^{\eps})=\mu(\theta^1)1_{[0,\tau)} + \sum_{j\geq 1} 1_{\Omega^j} \mu (\theta^j) 1_{[\tau,T]} $, and in view of the uniqueness of the solution to forward-backward SDE, we have $\hat I(\tau,\xi;\theta^{\eps},\mu(\theta^{\eps})) =\sum_{i\geq 1} 1_{\Omega^i} \hat I (\tau,\xi;\theta^{i},\mu(\theta^{i}))$ a.s. and thus,
\begin{align}
\overline V(\tau,\xi)
\leq \hat I(\tau,\xi;\mu)
	&\leq \sum_{i\geq 1}1_{\Omega^i} \hat I (\tau,\xi;\theta^{i},\mu(\theta^{i})) + \eps 
		= \hat I(\tau,\xi;\theta^{\eps},\mu(\theta^{\eps})) +\eps       \nonumber\\
	&=G_{\tau,\hat \tau}^{\tau,\xi;\theta^{\eps},\mu(\theta^{\eps}) } 
		\left[ V(\hat \tau,X^{\tau,\xi;\theta^{\eps},\mu(\theta^{\eps})}_{\hat \tau}) \right] + \eps,\quad \text{a.s.} \label{DPP-neq-2}
\end{align}

In a similar way to \eqref{DPP-neq-1}-\eqref{DPP-neq-2}, we construct the control $\hat{\theta}^{\eps}\in\Theta$ such that $\hat{\theta}^{\eps}_s =\theta^{\eps}_s $ for $0\leq s\leq \hat{\tau}$ and
\begin{align}
V(\hat{\tau}, X^{\tau,\xi;\theta^{\eps},\mu(\theta^{\eps})} )
	\leq Y^{\tau,\xi;\hat\theta^{\eps},  \mu (\hat\theta^{\eps})}_{\hat\tau} + \eps,\quad \text{a.s.}
	\label{DPP-neq-3}
\end{align}

From relations \eqref{DPP-neq-2} and \eqref{DPP-neq-3} and (iii) of Proposition \ref{prop-BSDE-comp}, it follows that
\begin{align}
\overline V(\tau,\xi)
	&\leq G_{\tau,\hat \tau}^{\tau,\xi;\theta^{\eps},\mu(\theta^{\eps}) } 
		\left[  Y^{\tau,\xi;\hat\theta^{\eps},  \mu (\hat\theta^{\eps})}_{\hat\tau} + \eps \right]  +\eps
	\nonumber \\
	&\leq 
		G_{\tau,\hat \tau}^{\tau,\xi;\theta^{\eps},\mu(\theta^{\eps}) } 
			\left[  Y^{\tau,\xi;\hat\theta^{\eps},  \mu (\hat\theta^{\eps})}_{\hat\tau}  \right]  + \left(  1+C_0 \right)   \eps 
	\nonumber \\
	&= G_{\tau,\hat \tau}^{\tau,\xi;\hat \theta^{\eps},\mu(\hat \theta^{\eps}) } 
			\left[  Y^{\tau,\xi;\hat\theta^{\eps},  \mu (\hat\theta^{\eps})}_{\hat\tau}  \right]  + \left(  1+C_0 \right)   \eps 
	\nonumber \\
	 &=Y^{\tau,\xi;\hat\theta^{\eps},  \mu (\hat\theta^{\eps})}_{\tau}  
	 			+ 	 \left(  1+C_0 \right)   \eps \label{DPP-neq-R},
\end{align}
which together with the arbitrariness of $(\mu,\eps)$ implies  $\overline V (\tau,\xi) \leq V(\tau,\xi)$, a.s.

\textbf{Step 2.} We prove $ V (\tau,\xi) \leq \overline V(\tau,\xi)$, a.s. The methodology is analogous to that in \textbf{Step 1.} 
Notice that 
$$
\overline V(\tau,\xi)=\essinf_{\mu\in\cM} \hat I(\tau,\xi;\mu),
$$
and that  $ I (\tau,\xi;\theta,\gamma)$ depends only on the values of controls $\theta$  and $\gamma$ on $[\tau,\hat\tau]$. There exists a sequence $\{\mu^i;\,i\geq 1\}\subset \cM$ such  that $\mu^i_s=\mu^1_s$ a.s. for any $0\leq s < \tau$, $i\geq 1$, and
\begin{align}
\overline V(\tau,\xi) 
	& =\inf_{i\geq 1} \hat I (\tau,\xi;\mu^i),\quad \text{a.s.}  \label{DPP-neq-1-S2}
\end{align}

For any $\eps\in(0,1)$, set $\tilde{\Lambda}^i:=\{\hat I (\tau,\xi;\mu) -\eps \leq \overline V(\tau,\xi) \}  \in\sF_{\tau}$, for $i=1,2,\dots$, and put
$\Lambda^1=\tilde\Lambda^1$, $\Lambda^j=\tilde\Lambda^{j-1}\backslash \cup_{i=1}^{j-1} \Lambda^i\in\sF_{\tau}$, $j\geq 2$. Then $\{\Lambda^j;\,j\geq 1\}$ is an $(\Omega,\sF_{\tau})$-partition, and 
\begin{align}
\mu^{\eps}:=\mu^1 1_{[0,\tau)} + \sum_{j \geq 1} 1_{\Lambda^j} \mu^j 1_{[\tau,T]}  
\end{align} 
belongs to $\cM$. The uniqueness of the solution to forward-backward SDE further yields, for all $\theta\in\Theta$, $\hat I(\tau,\xi;\theta,\mu^{\eps}(\theta)) =\sum_{i\geq 1} 1_{\Lambda^i} \hat I (\tau,\xi;\theta,\mu^{i}(\theta))$ a.s., and thus,
\begin{align}
\overline V(\tau,\xi)
	&\geq \sum_{i\geq 1}1_{\Lambda^i} \hat I (\tau,\xi;\mu^i) - \eps  \nonumber\\
	&\geq \sum_{i\geq 1}1_{\Lambda^i} \hat I (\tau,\xi;\theta,\mu^i(\theta)) - \eps  \nonumber\\
	&=G_{\tau,\hat \tau}^{\tau,\xi;\theta,\mu^{\eps}(\theta) } 
		\left[ V(\hat \tau,X^{\tau,\xi;\theta,\mu^{\eps}(\theta)}_{\hat \tau}) \right] -\eps,\quad \text{a.s. for all }\theta\in\Theta. \label{DPP-neq-2-S2}
\end{align}
Analogously to \eqref{DPP-neq-1-S2}-\eqref{DPP-neq-2-S2}, 
we may construct $\hat{\mu}^{\eps}\in\cM$ such that $\hat{\mu}^{\eps}_s =\mu^{\eps}_s $ for $0\leq s\leq \hat{\tau}$ and
\begin{align}
V(\hat{\tau}, X^{\tau,\xi;\theta,\mu^{\eps}(\theta)} )
	\geq Y^{\tau,\xi;\theta,  \hat\mu^{\eps} (\theta)}_{\hat\tau} - \eps,\quad \text{a.s.}
	\label{DPP-neq-3-S2}
\end{align}
From relations \eqref{DPP-neq-2-S2} and \eqref{DPP-neq-3-S2} and (iii) of Proposition \ref{prop-BSDE-comp}, it follows that
\begin{align}
\overline V(\tau,\xi)
	&\geq G_{\tau,\hat \tau}^{\tau,\xi;\theta,\mu^{\eps}(\theta) } 
		\left[  Y^{\tau,\xi;\theta,  \hat \mu^{\eps} (\theta)}_{\hat\tau} - \eps \right]  -\eps
	\nonumber \\
	&\geq 
		G_{\tau,\hat \tau}^{\tau,\xi;\theta,\mu^{\eps}(\theta) } 
		\left[  Y^{\tau,\xi;\theta,  \hat \mu^{\eps} (\theta)}_{\hat\tau}  \right] - \left(  1+C_0 \right)   \eps 
	\nonumber \\
	&= G_{\tau,\hat \tau}^{\tau,\xi;\theta,\hat\mu^{\eps}(\theta) } 
			\left[  Y^{\tau,\xi;\theta,  \hat \mu^{\eps} (\theta)}_{\hat\tau}  \right] - \left(  1+C_0 \right)   \eps 
	\nonumber \\
	 &=Y^{\tau,\xi;\theta,  \hat\mu^{\eps} (\theta)}_{\tau}  
	 				 -\left(  1+C_0 \right)  \eps, \quad \text{a.s. for all }\theta\in\Theta , \label{DPP-neq-L}
\end{align}
which together with the arbitrariness of $(\theta,\eps)$ implies $\overline V (\tau,\xi) \geq V(\tau,\xi)$, a.s.
\end{proof}

\begin{rmk}\label{rmk-DPP}
For any $\eps\in (0,1)$, we have actually constructed in the above proof a pair $(\hat\theta^{\eps},\hat\mu^{\eps})\in \Theta\times \cM$ such that
\begin{align*}
Y^{\tau,\xi; \hat\theta^{\eps},  \hat\mu^{\eps} (\hat\theta^{\eps})}_{\tau}  
	 			- \left(  1+C_0 \right)   \eps 
				&\leq 
					V(\tau,\xi)
				\leq 
				Y^{\tau,\xi;\hat\theta^{\eps},  \hat\mu^{\eps} (\hat\theta^{\eps})}_{\tau}  
	 			+\left(  1+C_0 \right)   \eps, \quad \text{a.s.,}
				\\
Y^{\tau,\xi; \hat\theta^{\eps},  \hat\mu^{\eps} (\hat\theta^{\eps})}_{\hat\tau}  
	 			-    \eps 
				&\leq 
					V(\hat\tau,X^{\tau,\xi;\hat\theta^{\eps},  \hat\mu^{\eps} (\hat\theta^{\eps})}_{\hat\tau})
				\leq 
				Y^{\tau,\xi;\hat\theta^{\eps},  \hat\mu^{\eps} (\hat\theta^{\eps})}_{\hat\tau}  
	 			+  \eps, \quad \text{a.s.,}
\end{align*}
where the constant $C_0$ depends only on $L$ and $T$. 
\end{rmk}


\section{Definition of viscosity solutions and a stability result}\label{sec:def}

For each stopping time $t\leq T$, denote by $\mathcal{T}^t$ the set of stopping times $\tau$ valued in $[t,T]$ and by $\mathcal{T}^t_+$ the subset of $\mathcal{T}^t$ such that $\tau>t$ for any $\tau\in \mathcal{T}^t_+$. For each $\tau\in\mathcal T^0$ and $\Omega_{\tau}\in\sF_{\tau}$, we denote by $L^0(\Omega_{\tau},\sF_{\tau};\bR^d)$ the set of $\bR^d$-valued $\sF_{\tau}$-measurable functions on $\Omega_{\tau}$. 

\subsection{A class of sublinear functionals}
A class of sublinear functionals may be defined through BSDEs (see \cite{peng2011backward} for instance). For each $K\geq 0$ and two stopping times $\tau_1,\tau_2\in \mathcal{T}^0$ with $\tau_1\leq \tau_2$ a.s., we define
$$
\overline\cE^K_{\tau_1,\tau_2}[\xi] =\overline Y_{\tau_1},\quad 
\underline\cE^K_{\tau_1,\tau_2}[\xi] =\underline Y_{\tau_1}, \quad
\text{for }\xi\in L^2(\Omega,\sF_{\tau_2},\bP),
$$  
where $(\overline Y_{s})_{\tau_1\leq s\leq \tau_2}$ and $(\underline Y_{s})_{\tau_1\leq s\leq \tau_2}$, together with another pair of processes
$(\overline Z_{s})_{\tau_1\leq s\leq \tau_2}$ and $(\underline Z_{s})_{\tau_1\leq s\leq \tau_2}$, are solutions to the following BSDEs:
 \begin{align*}
 \overline Y_{t}&=\xi+\int_{t}^{\tau_2}  K\left(\left|\overline Y_s\right|+ \left|\overline Z_s\right|\right)\, ds 
 -\int_{t}^{\tau_2}\overline Z_s dW_s,\quad \tau_1\leq t\leq \tau_2,\\
  \underline Y_{t}&=\xi-\int_{t}^{\tau_2}  K\left( \left|\underline Y_s\right| + \left|\underline Z_s\right|\right)\, ds 
 -\int_{t}^{\tau_2}\underline Z_s dW_s,\quad \tau_1\leq t\leq \tau_2,
 \end{align*}
respectively.

A straightforward application of \cite[Theorem 7.2]{El_Karoui-reflec-1997}
yields the following representations:
\begin{align}
\overline\cE^K_{\tau_1,\tau_2}[\xi]
=\esssup_{\| (h^0,h)\|_{\cL^{\infty}([\tau_1,\tau_2];\bR^{1+m})}\leq K} E_{\sF_{\tau_1}}\left[
\xi\cdot \exp\left\{\int_{\tau_1}^{\tau_2} h_s dW_s -\frac{1}{2}\int_{\tau_1}^{\tau_2}(|h_s|^2+2h^0_s)\,ds\right\}
\right],
\label{representation-u}\\
\underline\cE^K_{\tau_1,\tau_2}[\xi]
=\essinf_{\| (h^0,h)\|_{\cL^{\infty}([\tau_1,\tau_2];\bR^{1+m})}\leq K} E_{\sF_{\tau_1}}\left[
\xi\cdot \exp\left\{\int_{\tau_1}^{\tau_2} h_s dW_s -\frac{1}{2}\int_{\tau_1}^{\tau_2}(|h_s|^2 +2h^0_s)\,ds\right\}
\right].\label{representation-l}
\end{align}
From the representation \eqref{representation-l}, we have
\begin{align}
&E_{\sF_{\tau_1}}[\sqrt{|\xi|}] 
\nonumber\\
&=  \essinf_{\|(h^0, h)\|_{\cL^{\infty}([\tau_1,\tau_2];\bR^{1+m)}}\leq K} 
E_{\sF_{\tau_1}}\left[
\sqrt{|\xi|} \cdot \exp\left\{\frac{1}{2}\int_{\tau_1}^{\tau_2} h_s dW_s -\frac{1}{4}\int_{\tau_1}^{\tau_2}(|h_s|^2+2h^0_s)\,ds\right\}  \right.
\nonumber \\
&\quad\quad \quad \quad \quad\quad\quad\quad\quad\quad\quad
\left.
\cdot \exp\left\{-\frac{1}{2}\int_{\tau_1}^{\tau_2} h_s dW_s +\frac{1}{4}\int_{\tau_1}^{\tau_2}(|h_s|^2+2h^0_s)\,ds\right\}
\right]
\nonumber \\
&\leq
 \essinf_{\|(h^0, h)\|_{\cL^{\infty}([\tau_1,\tau_2];\bR^{1+m)}}\leq K}
\left( E_{\sF_{\tau_1}}\left[
{|\xi|} \cdot \exp\left\{\int_{\tau_1}^{\tau_2} h_s dW_s -\frac{1}{2}\int_{\tau_1}^{\tau_2}(|h_s|^2+2h^0_s)\,ds\right\}  \right] \right)^{1/2}
\nonumber \\
&\quad
\cdot\esssup_{\|(h^0, h)\|_{\cL^{\infty}([\tau_1,\tau_2];\bR^{1+m)}}\leq K}
  \left(E_{\sF_{\tau_1}}\left[  
 \exp\left\{
 	- \int_{\tau_1}^{\tau_2} h_s dW_s +\frac{1}{2}\int_{\tau_1}^{\tau_2}(|h_s|^2+2h^0_s)\,ds
  		\right\}
\right]\right)^{1/2}
\nonumber  \\
&\leq \sqrt{ \underline\cE_{\tau_1,\tau_2}^K \left[
  |\xi| \right] \cdot e^{(K+2)K\|\tau_2-\tau_1\|_{L^{\infty}(\Omega,\sF,\bP)}}},\quad \text{a.s.}  \label{est-inf-dominate}
\end{align}
Similarly, it holds that
\begin{align}
\overline\cE_{\tau_1,\tau_2}^K \left[
  |\xi| \right]  \leq \sqrt{ e^{(K+2)K\|\tau_2-\tau_1\|_{L^{\infty}(\Omega,\sF,\bP)}}  \cdot  E_{\mathscr{F}_{\tau_1}}\left[ |\xi|^2   \right] },\quad \text{a.s.}
  \label{est-sup-dominate}
\end{align}

On the other hand, given $\xi^1,\xi^2\in L^2(\Omega,\sF_{\tau_2},\bP)$, recall that 
\begin{align*}
G_{s,\tau_2}^{\tau_1,x;\theta,\gamma}[\xi^i]=\overline Y_s^{\tau_1,x;\theta,\gamma,\xi^i},\quad s\in [\tau_1,\tau_2], \text{ for }i=1,2,
\end{align*}
where $\overline Y^{\tau_1,x;\theta,\gamma,\xi^i}$ together with $\overline Z^{\tau_1,x;\theta,\gamma,\xi^i}$ satisfies the following BSDE:
\begin{equation*}
  \left\{
  \begin{split}
  -d\overline Y_s^{\tau_1,x;\theta,\gamma,\xi^i}&=\,f(s,X_s^{\tau_1,x;\theta,\gamma},\overline Y_s^{\tau_1,x;\theta,\gamma,\xi^i},\overline Z_{s}^{\tau_1,x;\theta,\gamma,\xi^i},\theta_s,\gamma_s)\,ds
  -\overline Z_{s}^{\tau_1,x;\theta,\gamma,\xi^i}\,dW_s,\quad s\in[\tau_1, \tau_2];\\
 \overline Y_{\tau_2}^{\tau_1,x;\theta,\gamma,\xi^i}&=\xi^i,
    \end{split}
  \right.
\end{equation*}
with $X^{\tau_1,x;\theta,\gamma}$ being the solution to SDE \eqref{state-proces-control}.  In view of the uniform Lipschitz continuity of function $f$ in Assumption $\textbf{(A1)}$ and the comparison principle of BSDEs recalled in Proposition \ref{prop-BSDE-comp}, we have
\begin{align}
\underline\cE^L_{\tau_1,\tau_2}\left[ \xi^1-\xi^2\right]
\leq
G_{s,\tau_2}^{\tau_1,x;\theta,\gamma}[\xi^1]
-
G_{s,\tau_2}^{\tau_1,x;\theta,\gamma}[\xi^2]
\leq \overline\cE^L_{\tau_1,\tau_2}\left[ \xi^1-\xi^2\right], \quad\text{a.s.}
\label{dominate-G}
\end{align}

In the following lemma, we summarize some properties of the nonlinear functionals $\overline\cE^K_{\tau_1,\tau_2}$ and $\underline\cE^K_{\tau_1,\tau_2}$. As the assertions are following straightforwardly from the standard BSDE theory (see \cite{Karoui_Peng_Quenez,El_Karoui-reflec-1997,El-Karoui-Peng-Quenez-2001} for instance), the proofs are omitted.

\begin{lem}\label{lem-nonlinear}
Let $K\geq 0$ and $\tau_1,\tau_2\in \mathcal{T}^0$ with $\tau_1\leq \tau_2$. Given $\xi,\xi_1,\xi_2 \in L^2(\Omega,\sF_{\tau_2},\bP)$, it holds that:\\[3pt]
(i) $\overline\cE^K_{\tau_1,\tau_2}[\xi] =-\underline\cE^K_{\tau_1,\tau_2}[-\xi]$ a.s. and for any bounded $\sF_{\tau_1}$-measurable $\lambda$, we have $\overline\cE^K_{\tau_1,\tau_2}[\lambda\,\xi]= \lambda \,\overline\cE^K_{\tau_1,\tau_2}[\xi]$ and $\underline\cE^K_{\tau_1,\tau_2}[\lambda\,\xi]= \lambda \, \underline\cE^K_{\tau_1,\tau_2}[\xi]$; \\[2pt]
(ii) for each $\xi_{\tau_1}\in L^2(\Omega,\sF_{\tau_1},\bP)$, we have 
\begin{align*}
\overline\cE^K_{\tau_1,\tau_2}[\xi_{\tau_1}]
&=\xi_{\tau_1}^+ \cdot E_{\sF_{\tau_1}}\left[e^{K(\tau_2-\tau_1)}\right] -\xi_{\tau_1}^- \cdot E_{\sF_{\tau_1}}\left[e^{-K(\tau_2-\tau_1)}\right] ,
\\
 \underline\cE^K_{\tau_1,\tau_2}[\xi_{\tau_1}]
&=\xi_{\tau_1}^+ \cdot E_{\sF_{\tau_1}}\left[e^{-K(\tau_2-\tau_1)}\right]  -\xi_{\tau_1}^-\cdot E_{\sF_{\tau_1}}\left[e^{K(\tau_2-\tau_1)}\right] ,
 \end{align*}
 and if we assume further that $\tau_2$ is $\sF_{\tau_1}$-measurable, then the above conditional expectations $E_{\sF_{\tau_1}}[\cdot]$ may be dropped and it holds that 
 $\underline\cE^K_{\tau_1,\tau_2}\left[\overline\cE^K_{\tau_1,\tau_2}[\xi_{\tau_1}]\right]
 =
 \overline\cE^K_{\tau_1,\tau_2}\left[\underline\cE^K_{\tau_1,\tau_2}[\xi_{\tau_1}]\right]=\xi_{\tau_1}$ a.s.;\\[2pt]
(iii) by the comparison principle of BSDEs in Proposition \ref{prop-BSDE-comp},
for $0\leq K\leq \hat K$, $\overline\cE^K_{\tau_1,\tau_2}[\xi] \leq \overline\cE^{\hat K}_{\tau_1,\tau_2}[\xi]$ and $\underline\cE^K_{\tau_1,\tau_2}[\xi] \geq \underline\cE^{\hat K}_{\tau_1,\tau_2}[\xi]$ a.s.; \\[2pt]
(iv) when $K=0$, we have   $\overline\cE^0_{\tau_1,\tau_2}[\xi] = E_{\sF_{\tau_1}}[\xi]=\underline\cE^0_{\tau_1,\tau_2}[\xi]  $;\\[2pt]
(v) with probability 1, we have the representations \eqref{representation-u} and \eqref{representation-l} and dominating relations \eqref{est-inf-dominate}, \eqref{est-sup-dominate}, and \eqref{dominate-G} hold;\\[2pt]
(vi) with probability 1,
\begin{align*}
&\underline\cE^K_{\tau_1,\tau_2}[\xi_1] + \underline\cE^K_{\tau_1,\tau_2}[\xi_2]  \leq \underline\cE^{ K}_{\tau_1,\tau_2}[\xi_1+\xi_2],
\\
&\overline\cE^K_{\tau_1,\tau_2}[\xi_1] + \overline\cE^K_{\tau_1,\tau_2}[\xi_2]  \geq \overline\cE^{ K}_{\tau_1,\tau_2}[\xi_1+\xi_2].
\end{align*}
\end{lem}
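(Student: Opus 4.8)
The plan is to derive every item directly from the well-posedness and comparison theory for Lipschitz BSDEs (Proposition \ref{prop-BSDE-comp} and Appendix B), viewing the defining equations of $\overline\cE^K_{\tau_1,\tau_2}$ and $\underline\cE^K_{\tau_1,\tau_2}$ as BSDEs with driver $\pm K(|y|+|z|)$. For (i), if $(\overline Y,\overline Z)$ solves the BSDE defining $\overline\cE^K_{\tau_1,\tau_2}[\xi]$, then on $[\tau_1,\tau_2]$ the pair $(-\overline Y,-\overline Z)$ solves $-d(-\overline Y_s)=-K(|{-\overline Y_s}|+|{-\overline Z_s}|)\,ds-(-\overline Z_s)\,dW_s$ with terminal value $-\xi$, which is precisely the equation defining $\underline\cE^K_{\tau_1,\tau_2}[-\xi]$; uniqueness of solutions then gives $\overline\cE^K_{\tau_1,\tau_2}[\xi]=-\underline\cE^K_{\tau_1,\tau_2}[-\xi]$. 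For the homogeneity, given a nonnegative bounded $\sF_{\tau_1}$-measurable $\lambda$, the pair $(\lambda\overline Y,\lambda\overline Z)$ solves the BSDE with terminal datum $\lambda\xi$ because $\lambda$ is constant along $[\tau_1,\tau_2]$ and $K(|\lambda y|+|\lambda z|)=\lambda K(|y|+|z|)$; uniqueness concludes, and the general sign reduces to this by splitting on the $\sF_{\tau_1}$-measurable sets $\{\lambda\geq0\}$, $\{\lambda<0\}$ and invoking the first identity.

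Item (ii) is where a genuine computation is needed, and I would obtain it from the dual representations \eqref{representation-u}--\eqref{representation-l}. Since $\xi_{\tau_1}$ is $\sF_{\tau_1}$-measurable it factors out of the conditional expectation there; on $\{\xi_{\tau_1}\geq0\}$ the essential supremum defining $\overline\cE^K_{\tau_1,\tau_2}[\xi_{\tau_1}]$ reduces to $\xi_{\tau_1}$ times the supremum over $\|(h^0,h)\|_{\cL^{\infty}([\tau_1,\tau_2];\bR^{1+m})}\leq K$ of $E_{\sF_{\tau_1}}\big[\exp\{\int_{\tau_1}^{\tau_2}h_s\,dW_s-\frac12\int_{\tau_1}^{\tau_2}(|h_s|^2+2h^0_s)\,ds\}\big]$, which is attained at the constant control $h\equiv0$, $h^0\equiv-K$, giving the value $E_{\sF_{\tau_1}}[e^{K(\tau_2-\tau_1)}]$; on $\{\xi_{\tau_1}<0\}$ the supremum is replaced by the infimum, attained at $h^0\equiv K$, giving $E_{\sF_{\tau_1}}[e^{-K(\tau_2-\tau_1)}]$. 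Assembling the two pieces yields the stated formula, and the one for $\underline\cE^K_{\tau_1,\tau_2}$ is symmetric. When $\tau_2$ is in addition $\sF_{\tau_1}$-measurable the conditional expectations are trivial and $\overline\cE^K_{\tau_1,\tau_2}[\xi_{\tau_1}]$ is then $\sF_{\tau_1}$-measurable with the same sign as $\xi_{\tau_1}$, so feeding it back into the formula gives $\underline\cE^K_{\tau_1,\tau_2}\big[\overline\cE^K_{\tau_1,\tau_2}[\xi_{\tau_1}]\big]=\xi_{\tau_1}^+e^{K(\tau_2-\tau_1)}e^{-K(\tau_2-\tau_1)}-\xi_{\tau_1}^-e^{-K(\tau_2-\tau_1)}e^{K(\tau_2-\tau_1)}=\xi_{\tau_1}$, and symmetrically for the reversed composition.

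Items (iii), (iv) and (v) are immediate. For (iii), for $0\leq K\leq\hat K$ one has $K(|y|+|z|)\leq\hat K(|y|+|z|)$ pointwise with equal terminal data, so the comparison principle of Proposition \ref{prop-BSDE-comp} gives $\overline\cE^K_{\tau_1,\tau_2}[\xi]\leq\overline\cE^{\hat K}_{\tau_1,\tau_2}[\xi]$, while the drivers $-K(|y|+|z|)\geq-\hat K(|y|+|z|)$ reverse the inequality for $\underline\cE$. For (iv), $K=0$ makes the driver vanish, so the defining BSDE reduces to $\overline Y_t=\xi-\int_t^{\tau_2}\overline Z_s\,dW_s$, whence $\overline Y_t=E_{\sF_t}[\xi]$, and likewise for $\underline\cE^0_{\tau_1,\tau_2}$. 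Item (v) needs nothing new: the representations \eqref{representation-u}--\eqref{representation-l} and the dominating inequalities \eqref{est-inf-dominate}, \eqref{est-sup-dominate}, \eqref{dominate-G} were already established in the discussion preceding the lemma.

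Finally, for (vi) I would use the sub-additivity of the absolute value built into the driver. Let $(\underline Y^{(i)},\underline Z^{(i)})$, $i=1,2$, solve the BSDE defining $\underline\cE^K_{\tau_1,\tau_2}[\xi_i]$ and set $(\tilde Y,\tilde Z):=(\underline Y^{(1)}+\underline Y^{(2)},\underline Z^{(1)}+\underline Z^{(2)})$; then $\tilde Y_{\tau_2}=\xi_1+\xi_2$ and $-d\tilde Y_s=g_s\,ds-\tilde Z_s\,dW_s$ with $g_s=-K(|\underline Y^{(1)}_s|+|\underline Z^{(1)}_s|)-K(|\underline Y^{(2)}_s|+|\underline Z^{(2)}_s|)\leq-K(|\tilde Y_s|+|\tilde Z_s|)$ by the triangle inequality. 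Comparing with the BSDE defining $\underline\cE^K_{\tau_1,\tau_2}[\xi_1+\xi_2]$ gives $\underline\cE^K_{\tau_1,\tau_2}[\xi_1]+\underline\cE^K_{\tau_1,\tau_2}[\xi_2]=\tilde Y_{\tau_1}\leq\underline\cE^K_{\tau_1,\tau_2}[\xi_1+\xi_2]$, the first inequality, and the second follows by replacing $\xi_i$ with $-\xi_i$ and using (i); alternatively both inequalities can be read off \eqref{representation-u}--\eqref{representation-l}, since an essential infimum of a sum dominates the sum of the essential infima and dually for the supremum. The only steps carrying real content are (ii) — the closed-form evaluation and the resulting self-composition identity — together with the triangle-inequality observation in (vi); everything else falls out of uniqueness and the comparison principle for BSDEs.
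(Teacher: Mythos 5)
The paper gives no proof of this lemma (it declares the assertions ``straightforward from the standard BSDE theory'' and omits them), so there is no route to compare against; your BSDE-theoretic derivation is the natural one, and most of it is sound. The sign-flip identity in (i), the monotonicity in $K$ in (iii), the case $K=0$ in (iv), item (v), and the super/sub-additivity in (vi) (via the triangle inequality on the driver plus comparison, or via the dual representations) are all correctly argued.

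There is, however, a genuine gap in your treatment of (ii). You evaluate
$\esssup_{(h^0,h)}E_{\sF_{\tau_1}}\bigl[\exp\{\int_{\tau_1}^{\tau_2}h_s\,dW_s-\tfrac12\int_{\tau_1}^{\tau_2}(|h_s|^2+2h^0_s)\,ds\}\bigr]$ by claiming the supremum is attained at $h\equiv0$, $h^0\equiv-K$. This is justified only when $\tau_2-\tau_1$ is $\sF_{\tau_1}$-measurable: then $e^{K(\tau_2-\tau_1)}$ factors out of the conditional expectation and $E_{\sF_{\tau_1}}[\mathcal E(h)_{\tau_1,\tau_2}]=1$ for every admissible $h$ by optional stopping, so the bound $e^{-\int h^0}\le e^{K(\tau_2-\tau_1)}$ is saturated. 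For a genuinely random $\tau_2$ the choice of $h$ changes the conditional law of $\tau_2-\tau_1$ under the Girsanov measure, so $\esssup_h E_{\sF_{\tau_1}}[\mathcal E(h)_{\tau_1,\tau_2}e^{K(\tau_2-\tau_1)}]$ is in general strictly larger than $E_{\sF_{\tau_1}}[e^{K(\tau_2-\tau_1)}]$; indeed, comparing the defining BSDE for $\overline\cE^K_{\tau_1,\tau_2}[1]$ with the linear BSDE with driver $Ky$ (whose solution is $E_{\sF_t}[e^{K(\tau_2-t)}]$, with a nonvanishing martingale part when $\tau_2$ is random) shows the displayed closed form is only a lower bound there, typically strict. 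So your proof of the first display of (ii) covers only the $\sF_{\tau_1}$-measurable case --- which is the only case the paper subsequently uses, and arguably the only case in which the display is true, but you should state the restriction rather than assert the general claim. A second, smaller slip: the homogeneity $\overline\cE^K_{\tau_1,\tau_2}[\lambda\xi]=\lambda\overline\cE^K_{\tau_1,\tau_2}[\xi]$ holds only for $\lambda\ge0$. Your proposed reduction of the case $\lambda<0$ ``by splitting and invoking the first identity'' actually yields $\overline\cE^K_{\tau_1,\tau_2}[\lambda\xi]=\lambda\,\underline\cE^K_{\tau_1,\tau_2}[\xi]$ on $\{\lambda<0\}$, which differs from $\lambda\,\overline\cE^K_{\tau_1,\tau_2}[\xi]$ as soon as $K>0$ and $\tau_1<\tau_2$ (test $\lambda=-1$, $\xi\equiv1$ against your own formula in (ii)). Both points trace back to imprecisions in the lemma's statement, but a proof cannot paper over them; restrict to $\lambda\ge0$ and to $\sF_{\tau_1}$-measurable $\tau_2$ respectively, and the rest of your argument stands.
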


\subsection{Definition of viscosity solutions and some properties}
We then define the test function space for the viscosity solutions.
\begin{defn}\label{defn-testfunc}
For $u\in \cS^{4} (C_b^1(\bR^d))$ with $Du,\, D^2u \in\cS^4_{\text{loc}}([0,T);C_b^1(\bR^d))$, we say $u\in \mathscr C_{\sF}^3$ if 
there exists $(\mathfrak{d}_tu, \,\mathfrak{d}_{\omega}u)\in \cL^4_{\text{loc}}([0,T);C_b^1(\bR^d))\times  \cL^{4,2}_{\text{loc}}([0,T);C_b^2(\bR^d))$ with $D\mathfrak{d}_tu$,  $D\mathfrak{d}_{\omega}u$\,\, $\in$ $\cS^4_{\text{loc}}([0,T);C_b^1(\bR^d))$ such that a.s.,
\begin{align*}
u(r,x)=u(T_0,x)-\int_r^{T_0} \mathfrak{d}_su(s,x)\,ds -\int_r^{T_0}\mathfrak{d}_{\omega}u(s,x)\,dW_s,\quad \text{for all }\,0\leq r\leq T_0<T,\, x\in \bR^d.
\end{align*}
\end{defn}
For each $\xi\in L^4(\Omega,\sF_{T};\bR)$ and $h\in\cL^4([0,T];\bR)$, put
$$
Y_{t}=E_{\sF_t}\left[
\xi+\int_t^T h_s\,ds
\right], \quad\text{for }t\in[0,T].
$$
Then by the standard BSDE theory, it is easy to check that the space-invariant process $(Y_t)_{t\in[0,T]}$ belongs to $\mathscr C_{\sF}^3$.
The space $\mathscr C_{\sF}^3$ may be defined on general time subintervals like $[a,b]\subset [0,T]$ for some $a<b$, and in this case, we may write $\mathscr C_{\sF}^3([a,b])$ to specify the time interval. Here, the two linear operators $\mathfrak{d}_t$ and $\mathfrak{d}_{\omega}$ are defined on $ \mathscr C^3_{\sF}$, and they are actually consistent with the \textit{differential} operators w.r.t. the paths of Wiener process $W$ in \cite{Leao-etal-2018} and \cite[Section 5.2]{cont2013-founctional-aop}.    

We now introduce the notion of viscosity solutions. For each $K\geq 0$, $(u,\tau)\in \cS^{2}(C_b(\bR^d))\times \mathcal T^0$, $\Omega_{\tau}\in\sF_{\tau}$ with $\mathbb P(\Omega_{\tau})>0$, and $\xi\in L^0(\Omega_{\tau},\sF_{\tau};\bR^d)$, we define
{\small
\begin{align*}
\underline{\mathcal{G}}u(\tau,\xi;\Omega_{\tau},K):=\bigg\{
\phi\in\mathscr C^3_{\sF}:(\phi-u)(\tau,\xi)1_{\Omega_{\tau}}=0=\essinf_{\bar\tau\in\mathcal T^{\tau}} \underline\cE^K_{{\tau},\bar\tau\wedge \hat{\tau}}\left[\inf_{y\in \bR^d}
(\phi-u)(\bar\tau\wedge \hat{\tau},y)
\right]1_{\Omega_{\tau}}, \text{ a.s.},
&\\
\text{for some } \hat\tau \in \mathcal T^\tau_+\,&
\bigg\},\\
\overline{\mathcal{G}}u(\tau,\xi;\Omega_{\tau},K):=\bigg\{
\phi\in\mathscr C^3_{\sF}:(\phi-u)(\tau,\xi)1_{\Omega_{\tau}}=0=\esssup_{\bar\tau\in\mathcal T^{\tau}} \overline \cE^K_{{\tau},\bar\tau\wedge \hat{\tau} }\left[\sup_{y\in \bR^d}
(\phi-u)(\bar\tau\wedge \hat{\tau},y)
\right]1_{\Omega_{\tau}}, \text{ a.s.},
&\\
\text{for some } \hat\tau \in  \mathcal T^\tau_+\,&
\bigg\},
\end{align*}
}
where we call $\hat \tau$ the stopping time associated  to the relation $\phi\in \underline{\mathcal{G}}u(\tau,\xi;\Omega_{\tau},K)$ (or $\phi\in \overline{\mathcal{G}}u(\tau,\xi;\Omega_{\tau},K)$).
It is obvious that if either $\underline{\mathcal{G}}u(\tau,\xi;\Omega_{\tau},K)$ or $\overline{\mathcal{G}}u(\tau,\xi;\Omega_{\tau},K)$ is nonempty, we must have $0\leq\tau <T$ on $\Omega_{\tau}$.

\begin{lem}\label{lem-linear-growth}
Let $h$ be an infinitely differentiable function such that there exist constants $\alpha_0,\alpha_1,\alpha_2\in (0,\infty)$ such that
\begin{align*}
h(x)> \alpha_1 |x|-\alpha_2,\quad  |D^2h(x)|\leq \alpha_0,\quad  \forall
\, x\in\bR^d. 
\end{align*}
Given $(u,\tau)\in \cS^{2}(C_b(\bR^d))\times \mathcal T^0$, $\Omega_{\tau}\in\sF_{\tau}$ with $0\leq \tau<T$ on $\Omega_{\tau}$, $\mathbb P(\Omega_{\tau})>0$, and $\xi\in L^0(\Omega_{\tau},\sF_{\tau};\bR^d)$, suppose that there exists $  \phi \in \mathscr C^3_{\sF}$ such that for some $K\geq 0$,
\begin{align*}
(\phi+h-u)(\tau,\xi)1_{\Omega_{\tau}}=0=\essinf_{\bar\tau\in\mathcal T^{\tau}} \underline\cE^K_{{\tau},\bar\tau\wedge \hat{\tau}}\left[\inf_{y\in \bR^d}
(\phi+h-u)(\bar\tau\wedge \hat{\tau},y)
\right]1_{\Omega_{\tau}}, \text{ a.s.},
\text{ for some } \hat\tau \in \mathcal T^\tau_+.
\end{align*}
Then there exist $\tilde \tau \in \mathcal T^{\tau}_+$ and $\tilde \phi\in  \underline{\mathcal{G}}u(\tau,\xi;\Omega'_{\tau},K)$ with $\Omega'_{\tau}\in \sF_{\tau}$, $\Omega'_{\tau}\subset \Omega_{\tau}$, and $\bP (\Omega'_{\tau})>\frac{\bP(\Omega_{\tau})}{2}>0$ such that 
\begin{align*}
&(\mathfrak{d}_{s}\tilde\phi,D^2\tilde\phi,D\mathfrak{d}_{\omega}\tilde\phi, D\tilde\phi, \tilde\phi, \mathfrak{d}_{\omega}\tilde\phi )(s,\xi) 1_{\Omega'_{\tau}}
\\
&= (\mathfrak{d}_{s}(\phi+h),D^2(\phi+h),D\mathfrak{d}_{\omega}(\phi+h), D(\phi+h), \phi+h, \mathfrak{d}_{\omega}(\phi+h) )(s,\xi)
1_{\Omega'_{\tau}}
\end{align*}
for all $\tau \leq s\leq \tilde \tau$, a.s., and
$$\inf_{x\in\bR^d}(\tilde \phi (s,x)-u(s,x)) 1_{\Omega'_{\tau}}
=\inf_{x\in\bR^d} (\phi(s,x)+h(x)-u(s,x)) 1_{\Omega'_{\tau}}, 
	\text{ for all } 
		\tau\leq s\leq \tilde \tau, \text{ a.s.}
$$
\end{lem}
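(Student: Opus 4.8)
The plan is to replace the unbounded function $h$ — which is the only obstruction to $\phi+h$ lying in $\mathscr C^3_{\sF}$, since a member of $\mathscr C^3_{\sF}$ must in particular be bounded — by a bounded smooth surrogate $\tilde h\in C^\infty_b(\bR^d)$ that coincides with $h$ on a large ball, and to set $\tilde\phi:=\phi+\tilde h$. As $\tilde h$ is deterministic, bounded, infinitely differentiable and independent of time, the constant‑in‑time process $\tilde h$ belongs to $\mathscr C^3_{\sF}$ with $\mathfrak d_t\tilde h\equiv 0\equiv\mathfrak d_\omega\tilde h$; hence $\tilde\phi\in\mathscr C^3_{\sF}$ and $\mathfrak d_t\tilde\phi=\mathfrak d_t\phi=\mathfrak d_t(\phi+h)$, $\mathfrak d_\omega\tilde\phi=\mathfrak d_\omega\phi=\mathfrak d_\omega(\phi+h)$, $D\mathfrak d_\omega\tilde\phi=D\mathfrak d_\omega(\phi+h)$, while $\tilde\phi,D\tilde\phi,D^2\tilde\phi$ coincide with $\phi+h,D(\phi+h),D^2(\phi+h)$ at any point where $\tilde h$ equals $h$ together with its first two derivatives. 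So the derivative‑matching part of the conclusion is automatic once the ball is chosen large enough to contain $\xi$.

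First I would pass to a convenient subset of $\Omega_\tau$. Put $M:=\sup_{(s,x)\in[0,T]\times\bR^d}(|\phi(s,x)|+|u(s,x)|)$, which is a.s. finite because $\phi\in\cS^4(C_b^1(\bR^d))$ and $u\in\cS^2(C_b(\bR^d))$, and choose $R_0$ so large that $\Omega'_\tau:=\Omega_\tau\cap\{|\xi|\le R_0\}\cap\{M\le R_0\}\in\sF_\tau$ satisfies $\bP(\Omega'_\tau)>\tfrac12\bP(\Omega_\tau)$; this is possible since $\xi\in L^0$ and $M<\infty$ a.s. On $\Omega'_\tau$ one has, for every $s$, the two bounds $\inf_y(\phi+h-u)(s,y)\le(\phi+h-u)(s,\xi)\le R_0+\sup_{|y|\le R_0}h(y)=:c_0$ and, from $h(x)>\alpha_1|x|-\alpha_2$, $(\phi+h-u)(s,x)\ge\alpha_1|x|-\alpha_2-R_0$. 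Fixing a deterministic radius $R$ with $\alpha_1R-\alpha_2-R_0>c_0+1$ and $R>R_0+1$, it follows that on $\Omega'_\tau$, and for all $s$, the infimum of $(\phi+h-u)(s,\cdot)$ over $\bR^d$ is attained inside $B_R(0)$ and equals $\inf_{|y|\le R}(\phi+h-u)(s,y)$. I would then build $\tilde h\in C^\infty_b(\bR^d)$ with $\tilde h=h$ on $\overline{B_{R+1}(0)}$ and $\tilde h(x)>c_0+R_0$ for $|x|\ge R+1$ — e.g. $\tilde h(x):=h(\rho(|x|)\,x/|x|)$ where $\rho$ is smooth, nondecreasing, equal to the identity on $[0,R+1]$ and constant $=R+2$ on $[R+3,\infty)$, the lower bound following from $h(y)>\alpha_1|y|-\alpha_2>c_0+R_0$ for $R+1\le|y|\le R+2$. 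With $\tilde\phi:=\phi+\tilde h$, on $\Omega'_\tau$ and for every $s$: $(\tilde\phi-u)(s,\cdot)$ agrees with $(\phi+h-u)(s,\cdot)$ on $\overline{B_{R+1}(0)}$ (in particular at $\xi$, with derivatives), while $(\tilde\phi-u)(s,x)\ge\tilde h(x)-R_0>c_0\ge\inf_y(\phi+h-u)(s,y)$ for $|x|>R+1$; hence $\inf_x(\tilde\phi-u)(s,x)=\inf_x(\phi+h-u)(s,x)$ on $\Omega'_\tau$. Taking $\tilde\tau:=\hat\tau\in\mathcal T^\tau_+$, this already yields both the derivative identity and the infimum identity asserted in the conclusion.

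It remains to verify $\tilde\phi\in\underline{\mathcal G}u(\tau,\xi;\Omega'_\tau,K)$ with associated stopping time $\tilde\tau=\hat\tau$. The touching condition $(\tilde\phi-u)(\tau,\xi)1_{\Omega'_\tau}=(\phi+h-u)(\tau,\xi)1_{\Omega'_\tau}=0$ is immediate since $\Omega'_\tau\subset\Omega_\tau$ and $\tilde h=h$ near $\xi$ there. For the variational equality I would (i) record that the hypothesis forces $\inf_y(\phi+h-u)(\tau,y)1_{\Omega_\tau}=0$ (the upper bound from $(\phi+h-u)(\tau,\xi)1_{\Omega_\tau}=0$, the lower bound from the hypothesis with $\bar\tau=\tau$, using $\underline\cE^K_{\tau,\tau}=\mathrm{id}$), and that for any $\bar\tau\in\mathcal T^\tau$ one has $\bar\tau\wedge\hat\tau\in\mathcal T^\tau$ with $(\bar\tau\wedge\hat\tau)\wedge\hat\tau=\bar\tau\wedge\hat\tau$, whence the hypothesis gives $\underline\cE^K_{\tau,\bar\tau\wedge\hat\tau}[\inf_y(\phi+h-u)(\bar\tau\wedge\hat\tau,y)]1_{\Omega_\tau}\ge0$ and so $\essinf_{\bar\tau\in\mathcal T^\tau}\underline\cE^K_{\tau,\bar\tau\wedge\hat\tau}[\inf_y(\phi+h-u)(\bar\tau\wedge\hat\tau,y)]1_{\Omega_\tau}=0$; and (ii) localise $\underline\cE^K$ to the $\sF_\tau$-set $\Omega'_\tau$: since $1_{\Omega'_\tau}$ is $\sF_\tau$-measurable and $\tau\le\bar\tau\wedge\hat\tau$, multiplying the defining BSDE by $1_{\Omega'_\tau}$ yields $\underline\cE^K_{\tau,\bar\tau\wedge\hat\tau}[\eta]\,1_{\Omega'_\tau}=\underline\cE^K_{\tau,\bar\tau\wedge\hat\tau}[\eta\,1_{\Omega'_\tau}]$. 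Applying this with $\eta=\inf_y(\tilde\phi-u)(\bar\tau\wedge\hat\tau,y)$ and with $\eta'=\inf_y(\phi+h-u)(\bar\tau\wedge\hat\tau,y)$, and using $\eta\,1_{\Omega'_\tau}=\eta'\,1_{\Omega'_\tau}$ (the infimum‑matching, valid since $\tau\le\bar\tau\wedge\hat\tau\le\hat\tau$), gives $\underline\cE^K_{\tau,\bar\tau\wedge\hat\tau}[\inf_y(\tilde\phi-u)(\bar\tau\wedge\hat\tau,y)]1_{\Omega'_\tau}=\underline\cE^K_{\tau,\bar\tau\wedge\hat\tau}[\inf_y(\phi+h-u)(\bar\tau\wedge\hat\tau,y)]1_{\Omega'_\tau}$; taking $\essinf$ over $\bar\tau\in\mathcal T^\tau$ and invoking (i) produces $0$, which is the required identity.

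I expect the main obstacle to be precisely this last step: the functional $\underline\cE^K$ is non‑local, whereas the surrogate $\tilde h$ — hence the identity $\inf_x(\tilde\phi-u)(s,\cdot)=\inf_x(\phi+h-u)(s,\cdot)$ — is controlled only on the $\sF_\tau$-set $\Omega'_\tau$; the reconciliation relies on the $\sF_\tau$-localisation identity $\underline\cE^K_{\tau,\cdot}[\eta]1_{\Omega'_\tau}=\underline\cE^K_{\tau,\cdot}[\eta\,1_{\Omega'_\tau}]$ together with the a priori coercivity estimate that confines all relevant minimizers to $B_R$, where $\tilde h\equiv h$. The remaining ingredients — the explicit construction of $\tilde h$ and the verification that $\tilde\phi\in\mathscr C^3_{\sF}$ — are routine.
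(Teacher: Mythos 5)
Your overall strategy (truncate $h$ outside a ball that is guaranteed to contain all minimizers, so that $\tilde\phi:=\phi+\tilde h$ is a legitimate element of $\mathscr C^3_{\sF}$ agreeing with $\phi+h$ near $\xi$, then transfer the variational identity using the $\sF_{\tau}$-homogeneity $\underline\cE^K_{\tau,\cdot}[\lambda\eta]=\lambda\,\underline\cE^K_{\tau,\cdot}[\eta]$) is exactly the paper's, and most of the details you supply are sound. But there is one genuine gap: your localizing set is not $\sF_{\tau}$-measurable. You define $M:=\sup_{(s,x)\in[0,T]\times\bR^d}(|\phi(s,x)|+|u(s,x)|)$ and put $\Omega'_{\tau}:=\Omega_{\tau}\cap\{|\xi|\le R_0\}\cap\{M\le R_0\}$, asserting $\Omega'_{\tau}\in\sF_{\tau}$. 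Since $M$ involves the supremum of the adapted processes $u$ and $\phi$ over the whole horizon $[0,T]$, the event $\{M\le R_0\}$ is $\sF_T$-measurable but in general not $\sF_{\tau}$-measurable, whereas the conclusion of the lemma (and the definition of $\underline{\mathcal G}u(\tau,\xi;\Omega'_{\tau},K)$, which multiplies by $1_{\Omega'_{\tau}}$ inside $\underline\cE^K_{\tau,\cdot}$ and therefore needs $1_{\Omega'_{\tau}}$ to be $\sF_{\tau}$-measurable for your own localisation identity to apply) requires $\Omega'_{\tau}\in\sF_{\tau}$. The same non-adaptedness also silently underlies your claim that the minimizers stay in $B_R$ for \emph{all} $s\in[\tau,\hat\tau]$ on $\Omega'_{\tau}$ with $\tilde\tau=\hat\tau$: that confinement is only as good as the sup-norm bound you have on the relevant stochastic interval.

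The paper repairs exactly this point, and this is the one place where your argument must be modified rather than merely polished: define $\Omega'_{\tau}:=\{\sup_{x}(|u(\tau,x)|+|\phi(\tau,x)|)<M_0\}\cap\Omega_{\tau}$, which uses only time-$\tau$ data and hence lies in $\sF_{\tau}$ (with $\bP(\Omega'_{\tau})>\tfrac12\bP(\Omega_{\tau})$ for $M_0$ large), and then introduce the stopping time $\tilde\tau=\inf\{s>\tau:\ \sup_x(|u(s,x)|+|\phi(s,x)|)>M_0+1\}\wedge\hat\tau$. By continuity of the trajectories, the bound $M_0+1$ holds on the whole stochastic interval $[\tau,\tilde\tau]$ on $\Omega'_{\tau}$, which together with $h(x)>\alpha_1|x|-\alpha_2$ confines the minimizers to a deterministic ball $B_{M_1}(0)$ there; the cutoff $\tilde\phi(s,x)=\phi(s,x)+h(x)\chi(x)+(2M_0+2)(1-\chi(x))$ then works as you intend. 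This is also why the statement of the lemma produces a possibly strictly smaller stopping time $\tilde\tau\in\mathcal T^{\tau}_+$ rather than keeping $\hat\tau$, a feature your proof cannot reproduce. With this substitution, the rest of your argument (the explicit surrogate, the identity $\underline\cE^K_{\tau,\cdot}[\eta]1_{\Omega'_{\tau}}=\underline\cE^K_{\tau,\cdot}[\eta 1_{\Omega'_{\tau}}]$, and the reduction of the essinf identity) goes through.
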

\begin{proof}
As $u\in \cS^{2}(C_b(\bR^d))$, we may choose such a big $M_0>0$ that $\bP ( \Omega'_{\tau})>\frac{\bP(\Omega_{\tau})}{2}>0$ where 
$$\Omega'_{\tau}:= \left\{ \sup_{x\in\bR^d}(|u(\tau,x)|+ |\phi (\tau,x)|)<M_0\right\} \cap \Omega_{\tau}.$$
  Set
$$
\tilde\tau=\inf\{s>\tau:\, \sup_{x\in\bR^d}(|u(s,x)|+ |\phi (s,x)|)>M_0+1\}
\wedge \hat\tau.
$$
Obviously, it holds that $\tilde \tau \in \mathcal T^{\tau}_+$ on $\Omega'_{\tau}$. Moreover, for $\tau\leq s\leq \tilde \tau$, the infimum $\inf_{x\in\bR^d} (\phi(s,x)+h(x)-u(s,x))$ must be achieved at some point $x$ (one may take $x=\xi$ when $s=\tau$) inside the ball $B_{M_1}(0)$  on $\Omega'_{\tau}$ with $M_1:=\frac{2M_0+2+\alpha_2}{\alpha_1}$. 

Let $\chi$ be an infinitely differentiable $[0,1]$-valued (cutoff) function satisfying $\chi(x)=1$ when $|x|\leq M_1$ and $\chi(x)=0$ when $|x|\geq M_1+1$. Set $\tilde \phi(s,x)=\phi(s,x)+h(x)\chi(x)+(2M_0+2) (1-\chi(x))$. Then one may straightforwardly check that such a pair $(\tilde \tau, \tilde \phi)$ satisfies the desired properties.
\end{proof}
\begin{rmk}\label{rmk-linear-growth}
 For test functions in $\overline{\mathcal{G}}u(\tau,\xi;\Omega_{\tau},K)$, we may prove the similar results as in Lemma \ref{lem-linear-growth}. Let $h$ be an infinitely differentiable  as in Lemma \ref{lem-linear-growth}.
Given $(u,\tau)\in \cS^{2}(C_b(\bR^d))\times \mathcal T^0$, $\Omega_{\tau}\in\sF_{\tau}$ with $0\leq \tau<T$ on $\Omega_{\tau}$, $\mathbb P(\Omega_{\tau})>0$, and $\xi\in L^0(\Omega_{\tau},\sF_{\tau};\bR^d)$, suppose that there exists $  \phi \in \mathscr C^3_{\sF}$ such that for some $K\geq0$,
\begin{align*}
(\phi-h-u)(\tau,\xi)1_{\Omega_{\tau}}=0=\esssup_{\bar\tau\in\mathcal T^{\tau}} \overline\cE^K_{{\tau},\bar\tau\wedge \hat{\tau}}\left[\sup_{y\in \bR^d}
(\phi-h-u)(\bar\tau\wedge \hat{\tau},y)
\right]1_{\Omega_{\tau}}, \text{ a.s.},
\text{ for some } \hat\tau \in \mathcal T^\tau_+.
\end{align*}
Then there exist $\tilde \tau \in \mathcal T^{\tau}_+$ and $\tilde \phi\in  \overline{\mathcal{G}}u(\tau,\xi;\Omega'_{\tau},K)$ with $\Omega'_{\tau}\in \sF_{\tau}$, $\Omega'_{\tau}\subset \Omega_{\tau}$, and $\bP (\Omega'_{\tau})>\frac{\bP(\Omega_{\tau})}{2}>0$ such that 
\begin{align*}
&(\mathfrak{d}_{s}\tilde\phi,D^2\tilde\phi,D\mathfrak{d}_{\omega}\tilde\phi, D\tilde\phi, \tilde\phi, \mathfrak{d}_{\omega}\tilde\phi )(s,\xi) 1_{\Omega'_{\tau}}
\\
&= (\mathfrak{d}_{s}(\phi-h),D^2(\phi-h),D\mathfrak{d}_{\omega}(\phi-h), D(\phi-h), \phi-h, \mathfrak{d}_{\omega}(\phi-h) )(s,\xi)
1_{\Omega'_{\tau}}
\end{align*}
for all $\tau \leq s\leq \tilde \tau$, a.s., and
$$\sup_{x\in\bR^d}(\tilde \phi (s,x)-u(s,x)) 1_{\Omega'_{\tau}}
=\sup_{x\in\bR^d} \left( \phi(s,x)-h(x)-u(s,x)\right) 1_{\Omega'_{\tau}},
	\text{ for all } 
		\tau\leq s\leq \tilde \tau, \text{ a.s.}
$$
\end{rmk}

Now we are ready to introduce the definition of viscosity solutions.

\begin{defn}\label{defn-viscosity}
We say $V\in \cS^2(C_b(\bR^d))$ is a viscosity subsolution (resp. supersolution) of BSPDE \eqref{SHJBI-low} (equivalently \eqref{SHJB-low-eqiv}), if $V(T,x)\leq (\text{ resp. }\geq) \Phi(x)$ for all $x\in\bR^d$ a.s., and there exists $K_0\geq 0$, such that for any $K\in [K_0,\infty)$, $\tau\in  \mathcal T^0$, $\Omega_{\tau}\in\sF_{\tau}$ with $\mathbb P(\Omega_{\tau})>0$, $\xi\in L^0(\Omega_{\tau},\sF_{\tau};\bR^d)$, and any $\phi\in \underline{\cG}V(\tau,\xi;\Omega_{\tau},K)$ (resp. $\phi\in \overline{\cG}V(\tau,\xi;\Omega_{\tau},K)$), there holds
\begin{align}
&\text{ess}\!\liminf_{(s,x)\rightarrow (\tau^+,\xi)}
	  \left\{ -\mathfrak{d}_{s}\phi-\bH_-(D^2\phi,D\mathfrak{d}_{\omega}\phi, D\phi, \phi, \mathfrak{d}_{\omega}\phi ) \right\} (s,x) \leq0,
\label{defn-vis-sub}
\end{align}
for almost all $\omega\in\Omega_{\tau}$ (resp.
\begin{align}
&\text{ess}\!\limsup_{(s,x)\rightarrow (\tau^+,\xi)} 
		\left\{ -\mathfrak{d}_{s}\phi-\bH_-(D^2\phi,D\mathfrak{d}_{\omega}\phi, D\phi, \phi, \mathfrak{d}_{\omega}\phi ) \right\}(s,x)  \geq0,
\label{defn-vis-sup}
\end{align}
for almost all $\omega\in\Omega_{\tau}$).

Equivalently, $V\in \cS^2(C_b(\bR^d))$ is a viscosity subsolution (resp. supersolution) of BSPDE \eqref{SHJBI-low}, if $V(T,x)\leq (\text{ resp. }\geq) \Phi(x)$ for all $x\in\bR^d$ a.s., and for any $K_0\in[0,\infty)$, $\tau\in  \mathcal T^0$, $\Omega_{\tau}\in\sF_{\tau}$ with $\mathbb P(\Omega_{\tau})>0$, $\xi\in L^0(\Omega_{\tau},\sF_{\tau};\bR^d)$, and any $\phi\in \mathscr C^3_{\sF}$, whenever there exist $\eps>0$, $\tilde\delta>0$ and $\Omega_{\tau}'\subset\Omega_{\tau}$ such that $\Omega'_{\tau}\in\sF_{\tau}$, $\bP(\Omega'_{\tau})>0$ and
{\small
\begin{align*}
&
	\essinf_{(s,x)\in Q^+_{\tilde\delta}(\tau,\xi) \cap Q}
	  \left\{ -\mathfrak{d}_{s}\phi-\bH_-(D^2\phi,D\mathfrak{d}_{\omega}\phi, D\phi, \phi, \mathfrak{d}_{\omega}\phi ) \right\} (s,x)\geq  \eps, \text{ a.e. in }\Omega'_{\tau}
\\
(\text{resp. }
&\esssup_{(s,x)\in Q^+_{\tilde\delta}(\tau,\xi) \cap Q}
	  \left\{ -\mathfrak{d}_{s}\phi-\bH_-(D^2\phi,D\mathfrak{d}_{\omega}\phi, D\phi, \phi, \mathfrak{d}_{\omega}\phi ) \right\}(s,x) \leq  -\eps, \text{ a.e. in }\Omega'_{\tau}),
\end{align*}
}
then $\phi\notin \underline{\cG}V(\tau,\xi;\Omega_{\tau},K)$ (resp. $\phi\notin \overline{\cG}V(\tau,\xi;\Omega_{\tau},K)$) for any $K\geq K_0$.

The function $u$ is a viscosity solution of BSPDE \eqref{SHJBI-low} (equivalently \eqref{SHJB-low-eqiv}) if it is both a viscosity subsolution and a viscosity supersolution of \eqref{SHJBI-low}. The viscosity solution is defined analogously for general BSPDEs, especially for BSPDE \eqref{SHJBI-up} (equivalently \eqref{SHJB-up-eqiv}).
\end{defn}

In Definition \ref{defn-viscosity}, we shall also call $K_0$ the number associated to the viscosity (semi)solution; we may also say viscosity $K_0$-sub/supersolution or viscosity $K_0$-solution, inspired by the viscosity solutions for path-dependent PDEs in \cite{ekren2014viscosity,ekren2016viscosity-1}.
We shall give some remarks about the viscosity solutions. First, let
\begin{align*}
 \mathbb{L}^{\theta,\gamma}(t,x,A,B,p,y,z)
=  \,&
\text{tr}\left(\frac{1}{2}\sigma \sigma'(t,x,\theta,\gamma) A+\sigma(t,x,\theta,\gamma) B\right)
       +b'(t,x,\theta,\gamma)p \\
       &\,+f(t,x,y,z+\sigma'(t,x,\theta,\gamma)p,\theta,\gamma)
               ,               
\end{align*}
for $(t,x,A,B,p,y,z,\theta,\gamma)\in [0,T]\times\bR^d\times\bR^{d\times d}\times \bR^{m\times d} \times   \bR^d\times \bR \times \bR^m\times \Theta_0\times\Gamma_0$. 
In view of the assumption $\textbf{({A}1)}$, for each $\phi\in \mathscr C_{\sF}^3$,  there is $\zeta^{\phi}\in \cL^4_{\text{loc}}([0,T); \bR)$ such that for all $x,\bar x \in \bR^d$ and a.e. $(\omega,t)\in \Omega\times [0,T)$,
\begin{align}
 \esssup_{\gamma\in \Gamma_0, \theta\in\Theta_0} \Big| \left\{ -\mathfrak{d}_{s}\phi-\mathbb{L}^{\theta,\gamma}(D^2\phi,D\mathfrak{d}_{\omega}\phi, D\phi, \phi, \mathfrak{d}_{\omega}\phi ) \right\}(t,x) \Big| \leq \zeta^{\phi}_t,&\label{H-bounded}\\ 
  \esssup_{\gamma\in \Gamma_0, \theta\in\Theta_0}  \left|
  \mathbb{L}^{\theta,\gamma}(D^2\phi,D\mathfrak{d}_{\omega}\phi, D\phi, \phi, \mathfrak{d}_{\omega}\phi ) (t,x)
  - \mathbb{L}^{\theta,\gamma}(D^2\phi,D\mathfrak{d}_{\omega}\phi, D\phi, \phi, \mathfrak{d}_{\omega}\phi )  (t,\bar x)\right| \qquad\,\,\,&
  \nonumber\\
  \leq \zeta^{\phi}_t \cdot |x-\bar x|,&
 \label{R-Lip-const}
\end{align}
where by Definition \ref{defn-testfunc} we may further have $\zeta^{\phi}\in \cS^4_{\text{loc}}([0,T); \bR)$ in \eqref{R-Lip-const}.

 

 %
\begin{rmk}\label{rmk-defn-timechange}
If $u$ is a viscosity $K_0$-subsolution (resp. $K_0$-supersolution) of BSPDE \eqref{SHJBI-low} (equivalently \eqref{SHJB-low-eqiv}) for some $K_0\geq 0$, then for each $\lambda \in(-\infty,0]$, $\tilde{u}(t,x):=e^{\lambda t} u(t,x)$ is a viscosity $K_0$-subsolution (resp. $K_0$-supersolution) of BSPDE:
{\small
\begin{equation}\label{SHJB-tilde}
  \left\{\begin{array}{l}
  \begin{split}
  -\mathfrak{d}_{t}\tilde u(t,x)=\,& 
 \mathbb{K}(t,x,D^2\tilde u(t,x),D\mathfrak{d}_{\omega}\tilde u(t,x),D\tilde u(t,x),\tilde u(t,x),\mathfrak{d}_{\omega}\tilde u (t,x)) , \quad
                     (t,x)\in Q;\\
    \tilde u(T,x)=\, &e^{\lambda T}\Phi(x), \quad x\in\bR^d,
    \end{split}
  \end{array}\right.
\end{equation}
}
where for $(t,x,A,B,p,y,z)\in [0,T]\times\bR^d\times\bR^{d\times d}\times \bR^{m\times d} \times   \bR^d\times \bR \times \bR^m$,
\begin{align*}
\mathbb{K}(t,x,A,B,p,y,z) 
:= \lambda y +e^{\lambda t}\mathbb{H}_-(t,x,e^{-\lambda t}A,e^{-\lambda t}B,e^{-\lambda t}p,e^{-\lambda t}y,e^{-\lambda t}z) .
\end{align*}
Therefore, w.l.o.g., we may assume that $\mathbb{H}_-(t,x,A,B,p,y,z)$ is decreasing in $y$.

Indeed, assume $ u$ is a viscosity $K_0$-subsolution of BSPDE \eqref{SHJBI-low}/\eqref{SHJB-low-eqiv} and take $\tilde{u}(t,x)=e^{\lambda t} u(t,x)$. Let $\tilde{\phi}\in \underline{\cG}\tilde{u}(\tau,\xi;\Omega_{\tau},K)$ for $K\geq K_0$,  $\tau\in\cT^0$, $\Omega_{\tau}\in\sF_{\tau}$, and $\xi\in L^0(\Omega_{\tau},\sF_{\tau};\bR^d)$. Let ${ \tilde \tau}\in \cT^{\tau}_+$ be a stopping time, corresponding to  $\tilde{\phi}\in \underline{\cG}\tilde{u}(\tau,\xi;\Omega_{\tau},K)$. Set
$$
\phi^{\eps}(s,x)=e^{-\lambda s}\tilde{\phi}(s,x)+E_{\sF_{s}}[\eps(s-\tau)],\quad \forall \, \eps >0.
$$
Noticing that $\lambda\leq 0$ and $\inf_{x\in\bR^d}\tilde \phi(\tau,x)-e^{\lambda \tau} u(\tau,x)=0 = \tilde \phi(\tau,\xi)-e^{\lambda \tau} u(\tau,\xi)$ for almost all $\omega\in\Omega_{\tau}$, we have for $s>\tau$ and $x\in\bR^d$,
\begin{align*}
&\phi^{\eps}(s,x)-u(s,x)-e^{-\lambda {\tau}} (\tilde{\phi}-\tilde u)(s,x)\\
&=\left(  e^{-\lambda s}- e^{-\lambda {\tau}}\right)\tilde{\phi}(s,x) +\left(e^{\lambda (s-{\tau}) }-1 \right) u(s,x) + \eps (s-\tau)\\
&
\geq
\left(  e^{-\lambda s}- e^{-\lambda {\tau}}\right)\left(\tilde{\phi}(s,x)  -\tilde{\phi}(\tau,x) \right)+\left(e^{\lambda (s-{\tau}) }-1 \right) (u(s,x)-u(\tau,x)) + \eps (s-\tau)\\
&\quad\quad + \left( e^{-\lambda (s-{\tau}) }+e^{\lambda (s-{\tau}) }  -2\right) u(\tau,x)\\
&\geq \eps(s-\tau) -C (s-\tau) \Big(  \|\tilde\phi(s,\cdot) -\tilde\phi(\tau,\cdot)  \|_{L^{\infty}(\bR^d)} 
+ \| u (s,\cdot) - u (\tau,\cdot)  \|_{L^{\infty}(\bR^d)}   \\
&\quad\quad+\|  u (\tau,\cdot)  \|_{L^{\infty}(\bR^d)} (s-\tau) \Big).
\end{align*}

Set 
\begin{align*}
\hat \tau^{\eps}= \tilde \tau \wedge \inf \bigg\{
s>\tau:\, \|\tilde\phi(s,\cdot) -\tilde\phi(\tau,\cdot)  \|_{L^{\infty}(\bR^d)} 
+ \| u (s,\cdot) - u (\tau,\cdot)  \|_{L^{\infty}(\bR^d)}   &
\\
\quad\quad\quad+\|  u (\tau,\cdot)  \|_{L^{\infty}(\bR^d)} (s-\tau) \geq \frac{\eps}{C}&
\bigg\}.
\end{align*}
Then $\hat \tau^{\eps}\in \cT^{\tau}_+$ and for any $(\bar \tau,x)\in \cT^{\tau}\times \bR^d$ with $\bar \tau\leq \hat \tau^{\eps}$, we have 
$$
\phi^{\eps}(\bar \tau,x)-u(\bar\tau,x) \geq e^{-\lambda \tau}\left(   \tilde{\phi}(\bar \tau,x)-\tilde{u}(\bar\tau,x)   \right),\quad \text{a.s.}
$$
Then, the fact $\tilde{\phi}\in \underline{\cG}\tilde{u}(\tau,\xi;\Omega_{\tau},K)$ yields for almost all $\omega\in\Omega_{\tau}$,
\begin{align*}
\underline\cE^K_{{\tau},\bar\tau}\left[\inf_{y\in \bR^d}
(\phi^{\eps}-u)(\bar\tau,y)
\right]
&\geq 
\underline \cE^K_{{\tau},\bar\tau}\left[\inf_{y\in \bR^d} e^{-\lambda \tau}
(\tilde\phi-\tilde u)(\bar\tau,y)
\right]\\
&= e^{-\lambda \tau} \underline \cE^K_{{\tau},\bar\tau}\left[\inf_{y\in \bR^d}
(\tilde\phi-\tilde u)(\bar\tau,y)
\right]\\
&\geq 0= \left(\phi^{\eps}-u\right)(\tau,\xi).
\end{align*}
This implies that $\phi^{\eps}\in \underline{\cG}u(\tau,\xi;\Omega_{\tau},K)$ and thus, 
$$
\text{ess}\!\liminf_{(s,x)\rightarrow (\tau^+,\xi)}
		 \left\{ -\mathfrak{d}_s\phi^{\eps}-\bH_-(D^2\phi^{\eps},D\mathfrak{d}_{\omega}\phi^{\eps},D\phi^{\eps},\phi^{\eps},\mathfrak{d}_{\omega}\phi^{\eps})\right\} (s,x)  \leq \,0,$$
for almost all $\omega\in\Omega_{\tau}$. As $\eps$ tends to zero, we have 
$$
\text{ess}\!\liminf_{(s,x)\rightarrow (\tau^+,\xi)} 
	 	 \left\{ -\mathfrak{d}_s\phi^0-\bH_-(D^2\phi^0,D\mathfrak{d}_{\omega}\phi^0,D\phi^0,\phi^0,\mathfrak{d}_{\omega}\phi^0)\right\} (s,x) \leq \,0,$$
for almost all $\omega\in\Omega_{\tau}$, with $\phi^0(s,x)=e^{-\lambda s}\tilde \phi(s,x)$. Straightforward calculations yield
$$
\text{ess}\!\liminf_{(s,x)\rightarrow (\tau^+,\xi)} 
		\left\{ -\mathfrak{d}_s\tilde\phi-\mathbb K(D^2\tilde\phi,D\mathfrak{d}_{\omega}\tilde\phi,D\tilde\phi,\tilde{\phi},\mathfrak{d}_\omega\tilde\phi)\right\}(s,x) \leq \,0,
$$
for almost all $\omega\in\Omega_{\tau}$, which finally implies that $\tilde u$ is a viscosity subsolution of BSPDE \eqref{SHJB-tilde}.
\end{rmk}

\begin{rmk}\label{rmk-eq-def-vs}
Here, the definition of viscosity solution in \ref{defn-viscosity} is different from that of \cite{qiu2017viscosity} as we drop the conditional expectations in relations like \eqref{defn-vis-sub} and \eqref{defn-vis-sup}, use the nonlinear expectations $\overline\cE^K$ and $\underline\cE^K$ and employ the test functions from $\mathscr C_{\sF}^3$ which have stronger regularity than those in  \cite{qiu2017viscosity}. This is  to overcome the difficulties arising from the non-convexity  of the game and the nonlinear dependence of function $f$ on unknown variables. In principle, the stronger regularity of test functions here makes the existence of viscosity solution more tractable while increasing the difficulties for the uniquness. On the other hand, these finer test functions allow us to define the viscosity solution equivalently, by replacing the relations \eqref{defn-vis-sub} and \eqref{defn-vis-sup} respectively by the following
\begin{align}
&\text{ess}\!\liminf_{s\rightarrow \tau^+}
	  \left\{ -\mathfrak{d}_{s}\phi-\bH_-(D^2\phi,D\mathfrak{d}_{\omega}\phi, D\phi, \phi, \mathfrak{d}_{\omega}\phi ) \right\} (s,\xi) \leq0,
\label{defn-vis-sub-1}
\end{align}
for almost all $\omega\in\Omega_{\tau}$, and
\begin{align}
&\text{ess}\!\limsup_{s\rightarrow \tau^+} 
		\left\{ -\mathfrak{d}_{s}\phi-\bH_-(D^2\phi,D\mathfrak{d}_{\omega}\phi, D\phi, \phi, \mathfrak{d}_{\omega}\phi ) \right\}(s,\xi)  \geq0,
\label{defn-vis-sup-1}
\end{align}
for almost all $\omega\in\Omega_{\tau}$.

In fact, the relations \eqref{defn-vis-sub-1} and \eqref{defn-vis-sup-1} straightforwardly imply \eqref{defn-vis-sub} and \eqref{defn-vis-sup} respectively, and for the equivalence, we need only to show how to derive \eqref{defn-vis-sub-1} and \eqref{defn-vis-sup-1}  from  \eqref{defn-vis-sub} and \eqref{defn-vis-sup} respectively. We may use contradiction arguments. Suppose  \eqref{defn-vis-sub} is holding, whereas \eqref{defn-vis-sub-1} is not true. Then for some $\phi\in \underline{\cG}V(\tau,\xi;\Omega_{\tau},K)$ with $K\geq 0$, there exist $\eps>0$, $\tilde\delta>0$, $T_0\in(0,T)$, and $\Omega_{\tau}'\subset\Omega_{\tau}$ such that $\Omega'_{\tau}\in\sF_{\tau}$, $\{\tau<T_0\}\subset \Omega'_{\tau}$, $\bP(\Omega'_{\tau})>0$ and
\begin{align*}
&
	\essinf_{s\in [\tau,\tau+\tilde\delta \wedge T_0 ] }
	 \left\{ -\mathfrak{d}_{s}\phi-\bH_-(D^2\phi,D\mathfrak{d}_{\omega}\phi, D\phi, \phi, \mathfrak{d}_{\omega}\phi ) \right\} (s,\xi)\geq  \eps, \text{ a.e. in }\Omega'_{\tau}.
\end{align*}
Then for each $\delta \in (0,\tilde \delta)  $,  
\begin{align*}
&\frac{1}{\delta}\int_{\tau}^{\tau+\delta \wedge T_0}
\essinf_{s\in [\tau,\tau+\tilde\delta \wedge t ] }
 \left\{ -\mathfrak{d}_{s}\phi-\bH_-(D^2\phi,D\mathfrak{d}_{\omega}\phi, D\phi, \phi, \mathfrak{d}_{\omega}\phi ) \right\} (s,\xi) \,dt
\\
 &\geq \eps\cdot \frac{(\tau+\delta \wedge T_0) -\tau}{\delta}, \text{ a.e. in }\Omega'_{\tau},
\end{align*}
and thus, recalling \eqref{R-Lip-const} (with $\zeta^{\phi}\in \cS^4_{\text{loc}}([0,T); \bR)$), we have  for each $\rho\in (0,1)$,
\begin{align*}
&
 \frac{(\tau+\delta\wedge T_0) -\tau}{\delta} \cdot
\text{ess}\!\liminf_{(s,x)\rightarrow (\tau^+,\xi)}
	  \left\{ -\mathfrak{d}_{s}\phi-\bH_-(D^2\phi,D\mathfrak{d}_{\omega}\phi, D\phi, \phi, \mathfrak{d}_{\omega}\phi ) \right\} (s,x)
	\\
&
\geq
\frac{1}{\delta}\int_{\tau}^{\tau+\delta \wedge T_0}
\essinf_{(s,x_{\rho})\in [\tau,\tau+\delta \wedge t ]\times B_{\rho}(\xi)}
  \left\{ -\mathfrak{d}_{s}\phi-\bH_-(D^2\phi,D\mathfrak{d}_{\omega}\phi, D\phi, \phi, \mathfrak{d}_{\omega}\phi ) \right\} (s,x_{\rho}) \,dt
\\
&\geq
\frac{1}{\delta}\int_{\tau}^{\tau+\delta \wedge T_0}
\essinf_{s\in [\tau,\tau+\delta \wedge t ] }
  \left\{ -\mathfrak{d}_{s}\phi-\bH_-(D^2\phi,D\mathfrak{d}_{\omega}\phi, D\phi, \phi, \mathfrak{d}_{\omega}\phi ) \right\} (s,\xi) \,dt
\\
&\quad 
-
\frac{1}{\delta}\int_{\tau}^{\tau+\delta \wedge T_0}
\esssup_{(s,x_{\rho})\in [\tau,\tau+\delta \wedge t ]\times B_{\rho}(\xi)}
 \bigg| 
 \left\{ -\mathfrak{d}_{s}\phi-\bH_-(D^2\phi,D\mathfrak{d}_{\omega}\phi, D\phi, \phi, \mathfrak{d}_{\omega}\phi ) \right\} (s,x_{\rho})
 \\
&\quad\quad\quad\quad\quad\quad
-\left\{ -\mathfrak{d}_{s}\phi-\bH_-(D^2\phi,D\mathfrak{d}_{\omega}\phi, D\phi, \phi, \mathfrak{d}_{\omega}\phi ) \right\} (s,\xi)\bigg|
  \,dt
  \\
  &\geq
  \eps \cdot \frac{(\tau+\delta\wedge T_0) -\tau}{\delta} -\frac{1}{\delta}\int_{\tau}^{\tau+\delta \wedge T_0} 
    \left[  \esssup_{s\in [0,T_0]}\left|\zeta^{\phi}_s\right| \cdot \rho  \right] ds
  \\
  &=\eps\cdot \frac{(\tau+\delta\wedge T_0) -\tau}{\delta}
  -\rho\cdot  \frac{(\tau+\delta\wedge T_0) -\tau}{\delta} \cdot   \left(  \esssup_{s\in [0,T_0]}\left|\zeta^{\phi}_s\right|  \right)
  , \text{ a.e. in }\Omega'_{\tau}.
\end{align*}
 By the assumed relation \eqref{defn-vis-sub}, taking expectations on $\Omega'_{\tau}$ on both sides gives  
 \begin{align*}
 0\geq E\left[ 1_{\Omega'_{\tau}}\eps\cdot \frac{(\tau+\delta\wedge T_0) -\tau}{\delta}
  -1_{\Omega'_{\tau}}\rho\cdot  \frac{(\tau+\delta\wedge T_0) -\tau}{\delta} \cdot   \left(  \esssup_{s\in [0,T_0]}\left|\zeta^{\phi}_s\right|  \right)
 \right],
  \end{align*}
which by the dominated convergence theorem incurs a contradiction when $\delta $ and $\rho$ are sufficiently small. In this way, we prove the equivalence between \eqref{defn-vis-sub} (resp. \eqref{defn-vis-sup}) and  \eqref{defn-vis-sub-1} (resp. \eqref{defn-vis-sup-1}).
\end{rmk}
 
 \subsection{A stability result}
 
 The following result is about the stability of the defined viscosity (semi)solutions.
 
 \begin{thm}\label{thm-stability}
 Let Assumption $\textbf{(A1)}$ hold. For each $\eps\in(0,1)$, let  $\mathbb H_{-}^{\eps}$ be a measurable function: 
 $$
 \mathbb H_{-}^{\eps}:\left(\Omega\times [0,T]\times\bR^{2d+(d+m)\times d+ m+1},\,\sP\otimes\cB(\bR^{2d+(d+m)\times d+ m+1})\right) \longrightarrow \left(\bR,\cB(\bR)\right).
 $$
 Suppose that there are a real $M>0$ and a sequence of
 nonnegative functions $\{\rho(\eps)\}_{\eps\in[0,1]} \subset L^{2}(\Omega,\sF_T,\bP;\bR)$ such that
 \begin{align}
 \esssup_{(\omega,t,x)} \left| (\mathbb H^{\eps}_{-}-\mathbb{H}_{-})(t,x,A,B,p,y,z)\right|&
 \leq \rho(\eps) (|A|+|B|+|p|+M\wedge |y|+  M\wedge |z|), \nonumber\\&
 \forall \,\, (A,B,p,y,z)\in   \bR^{d\times d}\times \bR^{m\times d} \times   \bR^d\times \bR \times \bR^m, \label{apprx-eps}
 \end{align}
 and $\lim_{\eps\rightarrow 0^+} \rho(\eps) =0$, a.s.
 Assume that for each $\eps\in(0,1)$, $u^{\eps}$ is a viscosity  $K_0$-subsolution (resp., $K_0$-supersolution) of BSPDE \eqref{SHJBI-low} associated to the nonlinear (generator) function $\mathbb H_-^{\eps}$ for some $K_0\geq 0$. If $\lim_{\eps\rightarrow 0^+}\|u^{\eps}-u\|_{\cS^2(C_b(\bR^d))}=0$, then $u$ must be a $K_0$-subsolution (resp., $K_0$-supersolution) of BSPDE \eqref{SHJBI-low} associated to the nonlinear (generator) function $\bH_-$.
  \end{thm}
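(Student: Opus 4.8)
We treat the subsolution case; the supersolution case follows by reversing all inequalities and interchanging $(\underline{\cG},\underline{\cE}^K,\text{ess}\!\liminf)$ with $(\overline{\cG},\overline{\cE}^K,\text{ess}\!\limsup)$. The plan is to verify directly the subsolution property in Definition~\ref{defn-viscosity}, using the equivalent (pointwise) form of Remark~\ref{rmk-eq-def-vs}. So fix $K\ge K_0$, $\tau\in\cT^0$, $\Omega_\tau\in\sF_\tau$ with $\bP(\Omega_\tau)>0$, $\xi\in L^0(\Omega_\tau,\sF_\tau;\bR^d)$ and $\phi\in\underline{\cG}u(\tau,\xi;\Omega_\tau,K)$ with associated $\hat\tau\in\cT^\tau_+$; the goal is $\text{ess}\!\liminf_{(s,x)\to(\tau^+,\xi)}\{-\mathfrak{d}_s\phi-\bH_-(D^2\phi,D\mathfrak{d}_\omega\phi,D\phi,\phi,\mathfrak{d}_\omega\phi)\}(s,x)\le0$ a.e.\ on $\Omega_\tau$. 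Testing the defining relation of $\underline{\cG}u$ at $\bar\tau=\tau$ gives two facts to be used repeatedly: $(\phi-u)(\tau,\cdot)\ge0$ on $\Omega_\tau$ with a zero minimum over $\bR^d$ attained at $\xi$, and $\underline{\cE}^K_{\tau,\bar\tau\wedge\hat\tau}[\inf_y(\phi-u)(\bar\tau\wedge\hat\tau,y)]\,1_{\Omega_\tau}\ge0$ for every $\bar\tau\in\cT^\tau$. Since the conclusion does not involve $\eps$, we pass to a sequence $\eps_n\downarrow0$ along which $\Delta_\eps:=\sup_{[0,T]\times\bR^d}|u^\eps-u|\to0$ a.s.\ and $\rho(\eps)\to0$ a.s.; fix $r_0>0$ so small that $\{\tau+r_0^2<T\}\supset\Omega_\tau$ and that on $\overline{Q^+_{r_0}(\tau,\xi)}$ the fields $D^2\phi,\,D\mathfrak{d}_\omega\phi,\,D\phi,\,\phi,\,D\mathfrak{d}_s\phi$ are a.s.\ bounded (legitimate by Definition~\ref{defn-testfunc}), and set $\Omega^\eps_\tau:=\Omega_\tau\cap\{\Delta_\eps\le1\}$, so that $1_{\Omega^\eps_\tau}\uparrow1_{\Omega_\tau}$ a.s.

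The core of the argument is to manufacture, for each small $\eps$ and each $\eta\in(0,1)$, a touching point $\xi^\eps$ and a test function $\phi^\eps$ for $u^\eps$. Let $g$ be the convex, smooth, linearly growing function of \eqref{g-defined}--\eqref{h-linear-growth}. Because $(\phi-u^\eps)(\tau,\cdot)\ge-\Delta_\eps$ while it equals $(u-u^\eps)(\tau,\xi)\le\Delta_\eps$ at $\xi$, the coercive map $y\mapsto(\phi-u^\eps)(\tau,y)+\eta\,g(y-\xi)$ attains its minimum on $\{\eta\,g(\cdot-\xi)\le2\Delta_\eps\}$; by measurable selection we pick an $\sF_\tau$-measurable minimizer $\xi^\eps$ on $\Omega^\eps_\tau$, and $\{\eta\,g\le2\Delta_\eps\}\downarrow\{\xi\}$ forces $\xi^\eps\to\xi$ a.s.\ on $\Omega_\tau$ as $\eps\to0$ (for fixed $\eta$). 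With $c_\eps:=\big((\phi-u^\eps)(\tau,\xi^\eps)+\eta\,g(\xi^\eps-\xi)\big)1_{\Omega^\eps_\tau}$ (bounded, $\sF_\tau$-measurable, $c_\eps\to0$ a.s.) and a small $\sF_\tau$-measurable constant $\kappa_\eps\ge0$ to be chosen, put $\phi''(s,x):=\phi(s,x)-c_\eps+\kappa_\eps(s-\tau)^+$, which lies in $\mathscr C^3_\sF$, shares the spatial derivatives and $\mathfrak{d}_\omega$ of $\phi$ on $[\tau,T)$, and has $\mathfrak{d}_s\phi''=\mathfrak{d}_s\phi+\kappa_\eps$; take $h:=\eta\,g(\cdot-\xi)$. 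By the choice of $c_\eps$ one has $(\phi''+h-u^\eps)(\tau,\cdot)\ge0$ with a zero at $\xi^\eps$, and $(\phi'',h)$ is then, for an appropriate $\hat\tau^\eps\in\cT^\tau_+$, a candidate for the hypothesis of Lemma~\ref{lem-linear-growth} relative to $u^\eps$ at $\xi^\eps$: if the dynamic inequality $\underline{\cE}^K_{\tau,\bar\tau\wedge\hat\tau^\eps}[\inf_y(\phi''+h-u^\eps)(\bar\tau\wedge\hat\tau^\eps,y)]1_{\Omega^\eps_\tau}\ge0$ holds for all $\bar\tau\in\cT^\tau$, then Lemma~\ref{lem-linear-growth} produces $\Omega^{\eps,\eta}_\tau\subset\Omega^\eps_\tau$ with $\bP(\Omega^{\eps,\eta}_\tau)\ge\tfrac12\bP(\Omega^\eps_\tau)>0$ (and, after suitably enlarging the localization radius along $\{\eps_n\}$, $\liminf_n1_{\Omega^{\eps_n,\eta}_\tau}\ge1_{\Omega_\tau}$ a.s.) and a $\phi^\eps\in\underline{\cG}u^\eps(\tau,\xi^\eps;\Omega^{\eps,\eta}_\tau,K)$ whose jet at $(\cdot,\xi^\eps)$ near $\tau$ agrees on $\Omega^{\eps,\eta}_\tau$ with that of $\phi+\eta\,g(\cdot-\xi)-c_\eps+\kappa_\eps(\cdot-\tau)$, so that the perturbation of the jet of $\phi$ is $O(\eta\alpha_0)+O(|c_\eps|)+O(\kappa_\eps)$ (the value of $h$ on $B_{r_0}(\xi)$ being uniformly small as $r_0\to0$ as well).

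Establishing that dynamic inequality \emph{exactly} --- rather than up to a nonvanishing defect, which is what a crude estimate (superadditivity of $\underline{\cE}^K$, Lemma~\ref{lem-nonlinear}(i),(ii),(vi), the dominations \eqref{est-sup-dominate} and $\underline{\cE}^K_{\tau,\bar\tau'}[\inf_y(\phi-u)(\bar\tau',y)]\ge0$) immediately yields --- is the \textbf{main technical obstacle}. Writing $\inf_y(\phi''+h-u^\eps)(\bar\tau',y)=\kappa_\eps(\bar\tau'-\tau)+\big(\inf_y[(\phi-u^\eps)(\bar\tau',y)+\eta g(y-\xi)]-\inf_y[(\phi-u^\eps)(\tau,y)+\eta g(y-\xi)]\big)$ (so that the second summand vanishes at $\bar\tau'=\tau$), the remaining gap is the $\underline{\cE}^K$-expectation of the increment on $[\tau,\bar\tau']$ of the continuous process $s\mapsto(\phi-u^\eps)(s,\cdot)$, which tends to $0$ as $\bar\tau'\downarrow\tau$; the delicate point is to choose $\hat\tau^\eps$ so that this increment is, in the $\underline{\cE}^K$-sense and uniformly in $\bar\tau\in\cT^\tau$, of linear order in $\bar\tau'-\tau$, so that it can be absorbed into $\kappa_\eps(\bar\tau'-\tau)$ with a compensator $\kappa_\eps$ that still tends to $0$. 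This is the only place where the path-continuity of $u^\eps$ and $\phi$ near $\tau$ (via $u,u^\eps\in\cS^2(C_b(\bR^d))$ and $\phi\in\mathscr C^3_\sF$) is essential, and organizing it correctly is where most of the work lies.

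Granting the construction, apply the hypothesis that $u^\eps$ is a viscosity $K_0$-subsolution for $\bH_-^\eps$ (with $K\ge K_0$) to $\phi^\eps$ and $\xi^\eps$ on $\Omega^{\eps,\eta}_\tau$: $\text{ess}\!\liminf_{(s,x)\to(\tau^+,\xi^\eps)}\{-\mathfrak{d}_s\phi^\eps-\bH_-^\eps(D^2\phi^\eps,D\mathfrak{d}_\omega\phi^\eps,D\phi^\eps,\phi^\eps,\mathfrak{d}_\omega\phi^\eps)\}(s,x)\le0$ a.e.\ on $\Omega^{\eps,\eta}_\tau$. Near $(\tau,\xi)$ one has $M\wedge|\phi^\eps|\le M$ and $M\wedge|\mathfrak{d}_\omega\phi^\eps|=M\wedge|\mathfrak{d}_\omega\phi|\le M$ (this is exactly why the truncations $M\wedge|y|$, $M\wedge|z|$ appear in \eqref{apprx-eps}: $\mathfrak{d}_\omega\phi$ is only $\cL^{4,2}_{\text{loc}}$, not pointwise bounded), while $|D^2\phi^\eps|+|D\mathfrak{d}_\omega\phi^\eps|+|D\phi^\eps|$ is a.s.\ bounded there; hence \eqref{apprx-eps} bounds $|\bH^\eps_--\bH_-|$ at the jet of $\phi^\eps$ by $\rho(\eps)\cdot(\text{a.s.\ finite})\to0$ a.s. The Lipschitz continuity of $\bH_-$ in $(A,B,p,y,z)$ coming from Assumption~\textbf{(A1)} controls the replacement of the jet of $\phi^\eps$ by that of $\phi$ up to $O(\eta\alpha_0)+O(|c_\eps|)+O(\kappa_\eps)$, and \eqref{R-Lip-const} (and its analogue for $\phi^\eps$, whose jet is close to that of $\phi$) lets us move the base point from $\xi^\eps$ back to $\xi$ up to $(\text{a.s.\ finite})\cdot|\xi^\eps-\xi|$. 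Combining these, $\text{ess}\!\liminf_{(s,x)\to(\tau^+,\xi)}\{-\mathfrak{d}_s\phi-\bH_-(D^2\phi,D\mathfrak{d}_\omega\phi,D\phi,\phi,\mathfrak{d}_\omega\phi)\}(s,x)\le o_{\eps,\eta}(1)$ a.e.\ on $\Omega^{\eps,\eta}_\tau$, where $o_{\eps,\eta}(1)$ collects $\kappa_\eps$, $c_\eps$, $|\xi^\eps-\xi|$, $\rho(\eps)$ (all $\to0$ a.s.\ as $\eps\to0$) and $O(\eta\alpha_0)$. Letting $n\to\infty$ (so $1_{\Omega^{\eps_n,\eta}_\tau}$ has $\liminf$ covering $\Omega_\tau$) and then $\eta\to0$ gives the desired inequality a.e.\ on $\Omega_\tau$; thus $u$ is a viscosity $K_0$-subsolution of \eqref{SHJBI-low} for $\bH_-$, and the supersolution statement is obtained symmetrically.
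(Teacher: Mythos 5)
There is a genuine gap, and it sits exactly where you flag ``the main technical obstacle'': you never establish that your candidate $(\phi''+h)$ satisfies the dynamic minimality condition $\underline\cE^K_{\tau,\bar\tau\wedge\hat\tau^{\eps}}\bigl[\inf_y(\phi''+h-u^{\eps})(\bar\tau\wedge\hat\tau^{\eps},y)\bigr]\,1_{\Omega^{\eps}_{\tau}}\geq 0$ for all $\bar\tau\in\cT^{\tau}$, and the repair you sketch would not work. Your plan is to absorb the negative part of the increment of $s\mapsto\inf_y\bigl[(\phi-u^{\eps})(s,y)+\eta g(y-\xi)\bigr]$ over $[\tau,\bar\tau']$ into a compensator $\kappa_{\eps}(\bar\tau'-\tau)$ with $\kappa_{\eps}\to0$. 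But under the sublinear functional $\underline\cE^K$ (an infimum over Girsanov tilts, cf.\ \eqref{representation-l}), the defect produced by the martingale part of $\phi-u^{\eps}$ is generically of order $\sqrt{\bar\tau'-\tau}$, not $O(\bar\tau'-\tau)$; moreover the running-infimum process is only continuous and adapted, not a semimartingale, so there is no linear-in-time control of its decrease. No single finite $\kappa_{\eps}$ can dominate a $\sqrt{\bar\tau'-\tau}$ defect uniformly for $\bar\tau'$ near $\tau$ while still tending to $0$. In short: one cannot in general force the touching to occur at the original time $\tau$.

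The paper circumvents precisely this by \emph{moving the touching time}. It perturbs $\phi$ to $\phi^{\delta,\tilde\delta}=\phi+\delta t+\tilde\delta g$, sets $Y_t=(\phi^{\delta,\tilde\delta}-u^{\eps})(t\wedge\delta,\xi_{t\wedge\delta})$ and $Z_t=\inf_{s\in\cT^t}\underline\cE^K_{t,s}[Y_s]$, and takes $\tau_*=\inf\{s:Y_s=Z_s\}$. At $\tau_*$ the Snell envelope touches the obstacle, so the required dynamic inequality holds \emph{by construction} of the optimal stopping time (no compensator is needed); the added drift $\delta t$ then forces $\bP(\tau_*<\delta)\to1$, and $(\tau_*,\xi_{\tau_*})\to(\tau,\xi)$, so the conclusion transfers back to $\tau$ in the limit $\delta\to0^+$, $\tilde\delta\to0^+$. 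The test function also carries an explicit $\sF_t$-conditional correction built from $\overline\cE^K_{\tau_*,\cdot}$ so that it vanishes at $(\tau_*,\xi_{\tau_*})$, which your construction lacks. The remainder of your argument (the use of Lemma \ref{lem-linear-growth} to truncate, the role of the truncations $M\wedge|y|$, $M\wedge|z|$ in \eqref{apprx-eps}, the Lipschitz replacement of jets, and the final limit passage) is sound in outline and parallels the paper, but without the optimal-stopping step the proof does not close.
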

  
 \begin{rmk}
  A similar stability result holds for the viscosity (semi)solutions of BSPDE \eqref{SHJBI-up}. 
  \end{rmk}
  
 \begin{proof}
 We only need to give the proof for the case of viscosity subsolution of \eqref{SHJBI-low} with the nonlinear generator $\bH_-$, as the proofs follow similarly for the other cases. 
 In what follows, we set 
 $$
 \alpha(\eps)=\sup_{t\in[0,T]} \|(u^{\eps}-u)(t,\cdot)\|_{C_b(\bR^d)}.
 $$
 Obviously, it holds that $E[\alpha(\eps)] \leq  \|u^{\eps}-u\|_{\cS^2(C_b(\bR^d))}$.

 Let $\phi\in  \underline{\cG}u(\tau,\xi;\Omega_{\tau},K) $ for some $K\geq K_0$, $\tau\in  \mathcal T^0$, $\xi\in L^0(\Omega_{\tau},\sF_{\tau};\bR^d)$, and $\Omega_{\tau}\in\sF_{\tau}$ with $\mathbb P(\Omega_{\tau})>0$. Without  any loss of generality, we assume $\tau=0$, $\xi=0$, and $\Omega_{\tau}=\Omega$, and  $\hat \tau =T$ is the stopping time associated to the relation $\phi\in  \underline{\cG}u(0,0;\Omega,K)$, i.e., we have
 $$
 (\phi-u)(0,0) =0
 =\essinf_{\bar\tau\in\mathcal T^{0}} \underline\cE_{0,\bar\tau}^K \left[\inf_{y\in \bR^d}
(\phi-u)(\bar\tau ,y)
\right] .
 $$
In view of Remark \ref{rmk-defn-timechange}, we assume that  $\mathbb{H}_{-}(t,x,A,B,p,y,z)$ is decreasing in $y$ w.l.o.g..

 For each $\delta,\tilde \delta\in(0, 1\wedge T)$,  set $\phi^{\delta,\tilde\delta}(t,x)=\phi(t,x)+\delta t+\tilde \delta g(x)$ for $(t,x)\in[0,T]\times\bR^d$, where $g$ is the differentiable nonnegative convex function defined in \eqref{g-defined}-\eqref{h-linear-growth}. It follows that for each $\bar\tau\in \mathcal T^0_+$, there holds
 \begin{align*}
  (\phi^{\delta,\tilde\delta}-u)(0,0)=
  (\phi-u)(0,0) =0
 \leq \underline\cE_{0,\bar\tau}^K \left[\inf_{y\in \bR^d}
(\phi-u)(\delta\wedge\bar\tau  ,y)
\right] <  \underline\cE_{0,\bar\tau}^K \left[\min_{y\in \bR^d}
(\phi^{\delta,\tilde\delta}-u)(\delta\wedge \bar\tau ,y)
\right].
 \end{align*}
 As $\lim_{\eps\rightarrow 0^+}\|u^{\eps}-u\|_{\cS^2(C_b(\bR^d))}=0$, there exists $\eps^{\delta,\tilde \delta}\in(0,\delta)$ such that for each $\eps\in (0,\eps^{\delta,\tilde \delta}]$, it holds that
  \begin{align}
  (\phi^{\delta,\tilde\delta}-u^{\eps})(0,0)
   <   \underline\cE_{0,\delta}^K \left[\min_{y\in \bR^d}
(\phi^{\delta,\tilde\delta}-u^{\eps})(\delta ,y)
\right] \text{ and }\ 
  \|u^{\eps}-u\|_{\cS^2(C_b(\bR^d))}^{1/4} \leq \delta.
   \label{ineq-0}
 \end{align}
 For each $s\in[0,T]$, choose an $\sF_s$-measurable variable $\xi_s$ such that  
\begin{align*}
 (\phi^{\delta,\tilde\delta}-u^{\eps})(s,\xi_s)
 = \min_{x\in\bR^d}(\phi^{\delta,\tilde\delta}-u^{\eps})(s,x),
\end{align*}  
where the existence of the minimum points is ensured by the linear growth properties of function $g$ in \eqref{h-linear-growth} as well as by the measurable selection theorem \ref{thm-MS}. Further, due to the linear growth of function $g$ in \eqref{h-linear-growth}, basic calculations yield that 
\begin{align}
\sup_s |\xi_s| \leq 3+\frac{1}{\tilde\delta} \cdot \left\{ \sup_{t\in[0,T]} \left( \|\phi(t)\|_{C_b(\bR^d)}+\|u^{\eps}(t)\|_{C_b(\bR^d)}  \right) +\delta T\right\}, \text{ a.s.} \label{bd-xi}
\end{align}

For $t\in[0,T]$, set 
$$Y_t = (\phi^{\delta,\tilde\delta}-u^{\eps})(t\wedge\delta ,\xi_{t\wedge\delta })\quad \mbox{and}\quad Z_t= \inf_{s\in\mathcal T^t} \underline\cE_{t,s}^K\left[ Y_s\right] .$$
 As $\phi,u^{\eps}\in \cS^2(C_b(\bR^d))$, it is easy to check the time-continuity of the process $(Y_s)_{s\in[0,T]}$; moreover, we have $Y\in \cS^2(\bR)$. Define $\tau_*=\inf\{s\geq 0: Y_s=Z_s\}$, and set $\Omega_{\tau_*}=\{\tau_*<\delta \}$. In view of the relation \eqref{ineq-0} and the optimal stopping theory (see \cite{El_Karoui-reflec-1997}), we have 
 \begin{align}
  \underline\cE_{0,\delta}^K [Y_{\delta }] > (\phi^{\delta,\tilde\delta}-u^{\eps})(0,0) \geq Y_0\geq Z_0= E[Y_{\tau_*}]=E[Z_{\tau_*}] , \label{ineq-tau}
 \end{align}
 and thus $\bP(\Omega_{\tau_*})>0$. In fact, combining \eqref{ineq-tau} and the following two observations
$$Y_0\leq (\phi^{\delta,\tilde\delta}-u^{\eps})(0,0) \leq E[\alpha(\eps)]$$ and 
$$
 \underline\cE_{0,\tau_*}^K [Y_{\tau_*}] \geq 
 \underline\cE_{0,\tau_*}^K\left[
\delta \tau_* -\alpha(\eps)
+  \inf_{y\in \bR^d}
(\phi-u)(\tau_* ,y)
\right]
\geq 
 \underline\cE_{0,\tau_*}^K 
 \left[
\delta \tau_* -\alpha(\eps)
\right] ,
$$
we have $ \underline\cE_{0,\tau_*}^K \left[
\delta \tau_* -\alpha(\eps)
\right] \leq E[\alpha(\eps)]$ and thus, $\underline\cE_{0,\tau_*}^K \left[
\delta \tau_* \right] \leq 2 E[\alpha(\eps)]$.
We note that $Y$, $Z$, $\tau_*$, and $\Omega_{\tau_*}$ are depending on $\eps$, $\delta$, and $\tilde \delta$.
Recalling the representation \eqref{representation-l} and relation \eqref{est-inf-dominate}, we have
\begin{align}
E[\sqrt{\tau_*}] 
&\leq \sqrt{ \underline\cE_{0,\tau_*}^K \left[
  \tau_* \right] \cdot e^{(K+2)KT}}.  \label{est-tau-star}
\end{align}
 Then, it holds that
 \begin{equation*}\begin{split}
 1-\bP(\Omega_{\tau_*})
 &\leq \frac{E[\sqrt{\tau_*}]}{\sqrt{\delta}} 
 \leq \frac{\sqrt{ \underline\cE_{0,\tau_*}^K \left[
 \tau_* \right] \cdot e^{(K+2)KT} }}{\sqrt{\delta}}
\\& \leq\frac{\sqrt{2E[\alpha(\eps)]e^{(K+2)KT}}}{{\delta}}
 \leq \frac{\sqrt{2 e^{(K+2)KT} \|u^{\eps}-u\|_{\cS^2(C_b(\bR^d))}}}{{\delta}}, 
 \end{split} \end{equation*}
 which converges to $0$ as $\delta\rightarrow 0^+$ as we take 
 $\delta\geq  \|u^{\eps}-u\|_{\cS^2(C_b(\bR^d))}^{1/4}$ in \eqref{ineq-0}, i.e., 
 $\lim_{\delta\rightarrow 0^+}\bP(\Omega_{\tau_*}) = \lim_{\delta\rightarrow 0^+}\bP(\tau_*<\delta) =1$. Recall that  $\eps<\delta$ and $\bP(\tau_*\leq \delta)=1$.   On the other hand, it is easy to see that  
 \begin{align}
 \lim_{\delta\rightarrow 0^+} Y_{\tau_*}
 &=\lim_{\delta\rightarrow 0^+} (\phi^{\delta,\tilde\delta}-u^{\eps})(\tau_*,\xi_{\tau_*})
 \nonumber\\
&= \lim_{\delta\rightarrow 0^+} 
\min_{x\in\bR^d}(\phi-u^{\eps})(\tau_*,x)+\tilde\delta g(x) +\delta\tau_*  \nonumber\\
&
=\min_{x\in\bR^d}(\phi-u)(0,x)+\tilde\delta g(x) 
= (\phi-u)(0,0)+\tilde\delta g(0) 
=0, \text{ a.s.,}\label{eq-Y-lim}
\end{align}
 which by the definition of function $g$ in \eqref{g-defined} and \eqref{h-linear-growth} also implies that $\lim_{\delta\rightarrow 0^+}\xi_{\tau_*} \rightarrow 0$ a.s.
 
 Define
 \begin{equation*}\begin{split}
 \overline\phi^{\delta,\tilde\delta} (t,x)
 =&\,\phi^{\delta,\tilde\delta} (t,x) + E_{\sF_t}\left[  \overline\cE^K_{\tau_*,t\vee \tau_*} \left[
 u^{\eps}(\tau_*,\xi_{\tau_*})
 -\phi^{\delta,\tilde\delta} (\tau_*,\xi_{\tau_*})
 \right] \right]
 \\=&\, \phi^{\delta,\tilde\delta} (t,x)+
 E_{\sF_t} \bigg[ (
 u^{\eps}(\tau_*,\xi_{\tau_*}) - \phi^{\delta,\tilde\delta} (\tau_*,\xi_{\tau_*}) )^+    e^{K(t\vee \tau_*-\tau_*)} \\&\hspace{3cm}
 -(u^{\eps}(\tau_*,\xi_{\tau_*})-\phi^{\delta,\tilde\delta} (\tau_*,\xi_{\tau_*}))^-   e^{-K(t\vee\tau_*-\tau_*)}\bigg].
 \end{split}\end{equation*}
 Straightforwardly, we may obtain
 $$( \overline\phi^{\delta,\tilde\delta}-u^{\eps})(\tau_*,\xi_{\tau_*}) 1_{\Omega_{\tau_*}}
  =0
 =\essinf_{\bar\tau\in\mathcal T^{\tau_*}} \underline\cE_{\tau_*,\bar\tau}^K \left[\inf_{y\in \bR^d}
(\overline\phi^{\delta,\tilde\delta}-u^{\eps})(\bar\tau\wedge \delta ,y)
\right] 1_{\Omega_{\tau_*}} ,\quad \text{a.s.}$$
 As $u^{\eps}$ is a viscosity $K_0$-subsolution, by Lemma \ref{lem-linear-growth} the function $\overline\phi^{\delta,\tilde\delta}$ admits a truncated version (denoted by itself) in $\underline{\cG}u^{\eps}(\tau_*,\xi_{\tau_*};\Omega'_{\tau_*},K) $ for some $\Omega'_{\tau_*}\in\sF_{\tau_*}$ satisfying $\Omega'_{\tau_*} \subset \Omega_{\tau_*}$ $\bP(\Omega'_{\tau_*})>\frac{\bP(\Omega_{\tau_*})}{2}>0$. Notice that as $\bP(\tau_*\leq \delta)=1$ and $\lim_{\delta\rightarrow 0^+}\bP(\Omega_{\tau_*})=1$. Blumenthal's zero-one law implies that 
 $\lim_{\delta\rightarrow 0^+}\bP(\Omega'_{\tau_*})=1$.

 In view of Assumptions $\textbf{(A1)}$ and \eqref{apprx-eps}, relations \eqref{h-linear-growth} and \eqref{R-Lip-const} (with $\zeta^{\phi}\in \cS^4_{\text{loc}}([0,T); \bR)$), and Remark \ref{rmk-eq-def-vs}, we have a.e. on $\Omega'_{\tau_*}$,
 \begin{align*}
 0&
 \geq
 \text{ess}\!\liminf_{s\rightarrow \tau_*^+}
	\left\{ -\mathfrak{d}_{s}\overline\phi^{\delta,\tilde\delta}-\bH^{\eps}_-\left(D^2\overline\phi^{\delta,\tilde\delta},D\mathfrak{d}_{\omega}\overline\phi^{\delta,\tilde\delta}, D\overline\phi^{\delta,\tilde\delta}, \overline\phi^{\delta,\tilde\delta}, \mathfrak{d}_{\omega}\overline\phi^{\delta,\tilde\delta} 
	\right) \right\} (s,\xi_{\tau_*})
	\\
&=	
 \text{ess}\!\liminf_{s\rightarrow \tau_*^+}
	  \left\{ -\mathfrak{d}_{s}\phi 
	-\bH^{\eps}_-\Big(D^2\phi^{\delta,\tilde\delta},D\mathfrak{d}_{\omega}\phi , D\phi^{\delta,\tilde\delta}, \overline\phi^{\delta,\tilde\delta}, \mathfrak{d}_{\omega} \phi  
	\Big) \right\} (s,\xi_{\tau_*})
	  -\delta	-|(u^{\eps}- \phi^{\delta,\tilde\delta}) (\tau_*,\xi_{\tau_*})| K
	\\
&\geq 
\text{ess}\!\liminf_{s\rightarrow \tau_*^+} 
	  \left[
	 \left\{ -\mathfrak{d}_{s}\phi 
	-\bH_-\Big(D^2\phi^{\delta,\tilde\delta},D\mathfrak{d}_{\omega}\phi , D\phi^{\delta,\tilde\delta}, \overline\phi^{\delta,\tilde\delta}, \mathfrak{d}_{\omega} \phi  
	\Big) \right\}   
\right.
	\\
&
\quad \left.
	-\rho(\eps) \left(|D^2\phi^{\delta,\tilde\delta}|+|D\mathfrak{d}_{\omega}\phi |
	+M\wedge |\overline\phi^{\delta,\tilde\delta}|+|D\phi^{\delta,\tilde\delta}|+ M\wedge |\mathfrak{d}_{\omega} \phi|\right)\right](s,\xi_{\tau_*}) 
	-\delta -| Y_{\tau_*} | K 
	\\
&\geq 
\text{ess}\!\liminf_{s\rightarrow \tau_*^+} 
	  \left[ \left\{ -\mathfrak{d}_{s}\phi 
	-\bH_-\left(D^2\phi ,D\mathfrak{d}_{\omega}\phi , D\phi ,  \phi , \mathfrak{d}_{\omega} \phi  
	\right) \right\}  
\right.
	\\
&
\quad 
	-\rho(\eps) \left(|D^2\phi^{\delta,\tilde\delta}|+|D\mathfrak{d}_{\omega}\phi |
	+M\wedge |\overline\phi^{\delta,\tilde\delta}|+|D\phi^{\delta,\tilde\delta}|+ M\wedge |\mathfrak{d}_{\omega} \phi| \right) \\
& \quad \left.
- 2L\left( |D^2\phi^{\delta,\tilde\delta}- D^2\phi | 
+|D\phi^{\delta,\tilde\delta} - D\phi|
+  1\wedge ||\overline\phi^{\delta,\tilde\delta} - \phi|
 \right)\right](s,\xi_{\tau_*})  
 	-\delta -|Y_{\tau_*}| K
\\
&= 
\text{ess}\!\liminf_{s\rightarrow \tau_*^+} 
	   \left[\left\{ -\mathfrak{d}_{s}\phi 
	-\bH_-\left(D^2\phi ,D\mathfrak{d}_{\omega}\phi , D\phi ,  \phi , \mathfrak{d}_{\omega} \phi  
	\right) \right\} 
	-\rho(\eps) \left(M\wedge |\mathfrak{d}_{\omega} \phi |\right) 
	\right] (s,\xi_{\tau_*})
		 \\
&\quad
	-\rho(\eps)
	 \Big(|D^2\phi  + \tilde\delta D^2g |
	+|D\mathfrak{d}_{\omega}\phi  | + M\wedge |  u^{\eps}   |
	+|D\phi  +\tilde \delta Dg | \Big) (\tau_*,\xi_{\tau_*}) 
\\
&\quad
	-\delta -|Y_{\tau_*}| K
-  2L\left(|\tilde \delta D^2g(\xi_{\tau_*}) | 
+ |\tilde \delta Dg(\xi_{\tau_*}) | 
+1\wedge |  Y_{\tau_*} -\delta \tau_*-\tilde \delta g(\xi_{\tau_*})|  
 \right)
 \\
&\geq
\text{ess}\!\liminf_{s\rightarrow \tau_*^+} 
	  \left[\left\{ -\mathfrak{d}_{s}\phi 
	-\bH_-\left(D^2\phi ,D\mathfrak{d}_{\omega}\phi , D\phi ,  \phi , \mathfrak{d}_{\omega} \phi  
	\right) \right\} (s,0) -\esssup_{t\in[0,\delta]}|\zeta^{\phi}_t| \cdot |\xi_{\tau_*}| -\rho(\eps) M \right] 
	\\
&\quad  
-\rho(\eps) \left(\esssup_{s\in[0,\delta]} \Big(\|\phi(s)\|_{C_b^2(\bR^d)} +  \|D\mathfrak{d}_{\omega}\phi(s)\|_{C_b(\bR^d)}\Big) + 2\tilde\delta\alpha_0+ M\right )  
\\
&\quad 
-\delta - 4L\left(\tilde\delta \alpha_0 +1\wedge (|Y_{\tau_*}| + \delta \tau_* + \tilde\delta\alpha_0)\right)
-|Y_{\tau_*}  | K
,
 \end{align*}
 where $\alpha_0$ is from \eqref{h-linear-growth}.  Therefore, it holds that a.s.,
 \begin{align*}
 0&
 \geq
 \text{ess}\!\liminf_{s\rightarrow \tau_*^+} 
	 \left[ \left\{ -\mathfrak{d}_{s}\phi 
	-\bH_-\left(D^2\phi ,D\mathfrak{d}_{\omega}\phi , D\phi ,  \phi , \mathfrak{d}_{\omega} \phi  
	\right) \right\} (s,0) 1_{\Omega_{\tau'_*}}
	-    1_{\Omega'_{\tau_*}} |\xi_{\tau_*}| \cdot \esssup_{t\in[0,\delta]}|\zeta^{\phi}_t| \right]
	 \\
&\quad 
-1_{\Omega_{\tau'_*}}\rho(\eps) \left(\esssup_{s\in[0,\delta]} (\|\phi(s)\|_{C_b^2(\bR^d)} +  \|D\mathfrak{d}_{\omega}\phi(s)\|_{C_b(\bR^d)}) + 2\tilde\delta \alpha_0 + 2M\right )  
\\
&\quad
-\delta- 1_{\Omega'_{\tau_*}} 4(L+K)\left(2\tilde\delta \alpha_0 + \delta\tau_*+  |Y_{\tau_*}|\right).
 \end{align*}
 Let $\delta \rightarrow 0^+$ (implying $\eps\rightarrow 0^+$) and then $\tilde{\delta}\rightarrow 0^+$. A standard proof by contradiction gives
 $$
 0\geq  \text{ess}\!\liminf_{s\rightarrow 0+} 
	 \left\{ -\mathfrak{d}_{s}\phi 
	-\bH_-\left(D^2\phi ,D\mathfrak{d}_{\omega}\phi , D\phi ,  \phi , \mathfrak{d}_{\omega} \phi  
	\right) \right\} (s,0)  ,
 $$
 which together with the arbitrariness of $\phi$ and $K$ implies that $u$ is a viscosity $K_0$-subsolution of BSPDE \eqref{SHJBI-low} associated to the nonlinear (generator) function $\bH_{-}$.  
 \end{proof}
 \begin{rmk}
 In Theorem \ref{thm-stability}, the sequence of viscosity (semi)solutions $\{u^{\eps}\}$ are assumed to be  $K_0$-(semi)solutions with an identical number $K_0\geq 0$ so that we obtain the estimate like \eqref{est-tau-star} uniformly with respect to $\eps$ in the above proof. On the other hand, for $\tilde g= \Phi (\cdot), b^i(t,\cdot,\theta, \gamma),\sigma^{ij}(t,\cdot,\theta, \gamma), f(t,\cdot,y,z,\theta, \gamma)$, $1\leq i \leq d,\,1\leq j\leq m$, $(t,y,z,\theta,\gamma)\in[0,T]\times \bR\times\bR^m\times\Theta_0\times\Gamma_0$, we may use the mollifiers as in \eqref{I-approx} to define
 \begin{align*}
 \tilde g_{\eps}(x)
&= \int_{\bR^d}  \tilde g(\zeta) \rho\left(\frac{x-\zeta}{\eps}\right)\cdot \frac{1}{\eps^d}\,d\zeta,\quad (x, \eps)\in\bR^d\times (0,\infty),
\end{align*}
and for each $\eps>0$, corresponding to such defined coefficients $\Phi_{\eps}$, $b_{\eps}$, $\sigma_{\eps}$ and $f_{\eps}$, the Hamiltonian functions $\bH_{\pm}^{\eps}$ may be defined in the same way as $\bH_{\pm}$. Then the relation \eqref{apprx-eps} is satisfied with $\rho(\eps)=L\eps$ where $L$ is the constant in Assumption \textbf{(A1)}. We would also note that the approximating coefficients in Lemma \ref{lem-approx} do not lead to the associated Hamiltonian functions $\bH_{\pm}^{\eps}$ satisfying  \eqref{apprx-eps} because the approximations therein is not uniform in time $t$; see \eqref{approx-coeficients}.
 \end{rmk}



\section{Existence of viscosity solutions for stochastic HJBI equations}
This section is devoted to the existence of viscosity solutions.
\begin{thm}\label{existence}
Let {\bf (A1)} hold. Then the value function $V$ (resp., $U$) defined by \eqref{eq-value-func-low} (resp., \eqref{eq-value-func-up}) is a viscosity $L$-solution of the stochastic Hamilton-Jacobi-Bellmen-Isaacs equation \eqref{SHJBI-low} (resp., \eqref{SHJBI-up}).
\end{thm}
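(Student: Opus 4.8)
The plan is to establish, for the lower value function $V$, the two one-sided viscosity inequalities of Definition~\ref{defn-viscosity} with associated number $K_0=L$; the assertion for $U$ and $\bH_+$ follows by interchanging the roles of the two players. Membership $V\in\cS^2(C_b(\bR^d))$ is provided by Lemma~\ref{reg-value-funct} and Theorem~\ref{thm-cont}, and $V(T,x)=\Phi(x)$ because $X^{T,x;\theta,\gamma}_T=x$; by Remark~\ref{rmk-eq-def-vs} it suffices to test the inequalities in the form \eqref{defn-vis-sub-1}/\eqref{defn-vis-sup-1}, i.e.\ along $s\to\tau^+$ with the space variable frozen at $\xi$. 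For the subsolution part I would argue by contradiction: fix $K\ge L$, $\tau\in\cT^0$, $\Omega_\tau$, $\xi$, and $\phi\in\underline\cG V(\tau,\xi;\Omega_\tau,K)$ with associated $\hat\tau\in\cT^\tau_+$, and suppose that on some $\Omega'_\tau\subseteq\Omega_\tau$ with $\bP(\Omega'_\tau)>0$ one has $-\mathfrak d_s\phi(s,\xi)-\bH_-(D^2\phi,D\mathfrak d_\omega\phi,D\phi,\phi,\mathfrak d_\omega\phi)(s,\xi)\ge\eps$ for all $s\in[\tau,\tau+\tilde\delta^2]$ and some $\eps,\tilde\delta>0$. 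Since $\bH_-=\esssup_\theta\essinf_\gamma\mathbb L^{\theta,\gamma}$, this forces $\essinf_{\gamma\in\Gamma_0}\mathbb L^{\theta,\gamma}(D^2\phi,D\mathfrak d_\omega\phi,D\phi,\phi,\mathfrak d_\omega\phi)(s,\xi)\le-\mathfrak d_s\phi(s,\xi)-\eps$ for \emph{every} $\theta\in\Theta_0$; the measurable selection theorem~\ref{thm-MS} then yields a $\sP\otimes\cB(\Theta_0)$-measurable map $(\omega,s,\theta)\mapsto\hat\gamma(\omega,s,\theta)\in\Gamma_0$ with $\mathbb L^{\theta,\hat\gamma(\omega,s,\theta)}(\cdots)(s,\xi)\le-\mathfrak d_s\phi(s,\xi)-\tfrac\eps2$ on $[\tau,\tau+\tilde\delta^2]$, a.e.\ on $\Omega'_\tau$, and $\mu^\ast(\theta)_s:=\hat\gamma(\omega,s,\theta_s)$ defines a nonanticipative strategy $\mu^\ast\in\cM$ (depending on $\theta$ only through $\theta_s$) --- player~II's ``good response''.

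Next I would fix a small deterministic $h\in(0,\tilde\delta^2)$, set $\sigma:=(\tau+h)\wedge\hat\tau\in\cT^\tau$ (so $\sigma>\tau$ on $\Omega_\tau$), and apply the DPP~\eqref{DPP-low} with $\sigma$ in place of $\hat\tau$, giving $V(\tau,\xi)\le\esssup_\theta G^{\tau,\xi;\theta,\mu^\ast(\theta)}_{\tau,\sigma}[V(\sigma,X^{\tau,\xi;\theta,\mu^\ast(\theta)}_\sigma)]$. Using $\phi\in\underline\cG V$ with $\sigma\le\hat\tau$, the comparison $\underline\cE^K\le\underline\cE^L$ (Lemma~\ref{lem-nonlinear}), and the domination \eqref{dominate-G}, one gets $G^{\tau,\xi;\theta,\mu^\ast(\theta)}_{\tau,\sigma}[V(\sigma,X_\sigma)]\le G^{\tau,\xi;\theta,\mu^\ast(\theta)}_{\tau,\sigma}[\phi(\sigma,X_\sigma)]$ on $\Omega_\tau$. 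The It\^o--Wentzell formula (legitimate by the regularity built into $\mathscr C^3_\sF$ and with the sign convention of \eqref{SDE-u}) shows that $\tilde Y_s:=\phi(s,X^{\tau,\xi;\theta,\mu^\ast(\theta)}_s)$ solves the BSDE with terminal value $\phi(\sigma,X_\sigma)$ and driver $f(s,X_s,\tilde Y_s,\tilde Z_s,\theta_s,\mu^\ast(\theta)_s)+\big(-\mathfrak d_s\phi-\mathbb L^{\theta_s,\mu^\ast(\theta)_s}(D^2\phi,D\mathfrak d_\omega\phi,D\phi,\phi,\mathfrak d_\omega\phi)\big)(s,X_s)$, with $\tilde Z_s=\mathfrak d_\omega\phi(s,X_s)+\sigma'(s,X_s,\theta_s,\mu^\ast(\theta)_s)D\phi(s,X_s)$. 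By the choice of $\mu^\ast$ and the Lipschitz bound \eqref{R-Lip-const}, the residual term is $\ge\tfrac\eps2-C\zeta^\phi_s|X_s-\xi|$ on $[\tau,\sigma]$, so the comparison principle for BSDEs (Proposition~\ref{prop-BSDE-comp}) gives $G^{\tau,\xi;\theta,\mu^\ast(\theta)}_{\tau,\sigma}[\phi(\sigma,X_\sigma)]\le\tilde Y_\tau-\underline\cE^L_{\tau,\sigma}\big[\int_\tau^\sigma(\tfrac\eps2-C\zeta^\phi_s|X_s-\xi|)\,ds\big]$ with $\tilde Y_\tau=\phi(\tau,\xi)=V(\tau,\xi)$ on $\Omega_\tau$. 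Chaining the inequalities, $V(\tau,\xi)\le V(\tau,\xi)-\underline\cE^L_{\tau,\sigma}[\int_\tau^\sigma(\tfrac\eps2-C\zeta^\phi_s|X_s-\xi|)\,ds]$ on $\Omega_\tau$, uniformly in $\theta$; taking expectation over $\Omega'_\tau$, using $E_{\sF_\tau}[\sup_{[\tau,\sigma]}|X_s-\xi|^2]\le Ch$ (Lemma~\ref{lem-SDE}, uniform in $\theta$) and the estimate \eqref{est-inf-dominate}, the correction is strictly positive on a subset of $\Omega'_\tau$ of positive probability once $h$ is small --- a contradiction, so \eqref{defn-vis-sub-1} holds.

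The supersolution inequality \eqref{defn-vis-sup-1} is obtained symmetrically. If $-\mathfrak d_s\phi-\bH_-\le-\eps$ near $(\tau,\xi)$ on $\Omega'_\tau$, then $\esssup_\theta\essinf_\gamma\mathbb L^{\theta,\gamma}(\cdots)(s,\xi)\ge-\mathfrak d_s\phi(s,\xi)+\eps$, so the selection theorem provides a \emph{single} control $\theta^\ast\in\Theta$ with $\mathbb L^{\theta^\ast_s,\gamma}(\cdots)(s,\xi)\ge-\mathfrak d_s\phi(s,\xi)+\tfrac\eps2$ for all $\gamma\in\Gamma_0$; one then uses $V(\tau,\xi)=\essinf_\mu\esssup_\theta G^{\tau,\xi;\theta,\mu(\theta)}_{\tau,\sigma}[V(\sigma,X_\sigma)]\ge\essinf_{\gamma\in\Gamma}G^{\tau,\xi;\theta^\ast,\gamma}_{\tau,\sigma}[V(\sigma,X^{\tau,\xi;\theta^\ast,\gamma}_\sigma)]$ (the identity $\{\mu(\theta^\ast):\mu\in\cM\}=\Gamma$ holding because constant strategies are admissible), the domination \eqref{dominate-G} with $\overline\cE^L\le\overline\cE^K$ and $\phi\in\overline\cG V$, and the same It\^o--Wentzell/BSDE-comparison estimate with all inequalities reversed, to produce a contradiction. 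Hence $V$ is a viscosity $L$-solution of \eqref{SHJBI-low}; the same argument with the roles of the players exchanged shows $U$ is a viscosity $L$-solution of \eqref{SHJBI-up}.

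The main obstacle is this last, quantitative step rather than the structural scheme. The test function dominates $V$ only in the nonlinear-expectation ($\underline\cE^K$/$\overline\cE^K$) sense and not pointwise; the intermediate time $\sigma$ must be a control-\emph{independent} stopping time, so the spatial localization cannot be done by stopping $X$ and must be absorbed through the Lipschitz bound \eqref{R-Lip-const} and the uniform moment estimate of Lemma~\ref{lem-SDE}; and the generator excess $\tfrac\eps2$ has to be played off against all of these errors while still leaving a strictly positive remainder on a set of positive probability, uniformly over the opponent's adversarial control. Delivering this uniformity is precisely what the sublinear functionals $\overline\cE^K,\underline\cE^K$ and the uniform SDE/BSDE estimates are tailored for, while turning the pointwise Isaacs inequality into an admissible nonanticipative strategy requires the measurable selection theorem.
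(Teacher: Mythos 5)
Your proposal follows essentially the same route as the paper's proof: the dynamic programming principle combined with Peng's backward semigroup, the It\^o--Kunita formula to turn $G^{\tau,\xi;\theta,\gamma}_{\tau,\sigma}[\phi(\sigma,X_\sigma)]-\phi(\cdot,X_\cdot)$ into a BSDE with the residual generator $F$, freezing the state at $\xi$ with an $o(\delta)$ error controlled through Lemma~\ref{lem-SDE} and \eqref{R-Lip-const} (the paper's Lemmas~\ref{lemdifferenceofY} and \ref{estimateYZ}), and measurable selection to convert the pointwise Isaacs inequality into an admissible strategy or control. The only organizational difference is in the supersolution step, where you select a near-optimal control $\theta^\ast$ and use that constant strategies realize all of $\Gamma$, whereas the paper instead uses the inequality $\esssup_{\theta}\essinf_{\gamma}Y^{2,\theta,\gamma}_\tau\leq\essinf_{\mu}\esssup_{\theta}Y^{2,\theta,\mu(\theta)}_\tau$ together with Lemma~\ref{lemmaonY0} to identify the left-hand side with the solution of the BSDE driven by $F_0$; the two arguments are equivalent.
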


The approaches for the proof mix some BSDE techniques and the obtained dynamic programming principle. For some arbitrarily chosen but fixed test function $\phi\in \mathscr C_{\sF}^3$, we put 
\begin{equation}
\begin{split}
F&(s,x,y,z,\theta,\gamma)=\mathfrak{d}_s\phi(s,x)+tr\left(\frac{1}{2}\sigma\sigma'(s,x,\theta,\gamma)D^2\phi(s,x)+\sigma(s,x,\theta,\gamma)D\mathfrak{d}_\omega\phi(s,x)\right)\\&+b'(s,x,\theta,\gamma)D\phi(s,x)+f(s,x,y+\phi(s,x),z+\sigma'(s,x,\theta,\gamma)D\phi(s,x)+\mathfrak{d}_\omega\phi(s,x),\theta,\gamma)\,. 
\end{split}
\end{equation}
For each fixed $T_0\in(0,T)$, $0\leq \tau \leq T_0$, $\theta\in \Theta$, $\gamma\in\Gamma$, and $\delta\in (0,1)$, we consider the following BSDE defined on the interval $[\tau,\tau+\delta\wedge T_0] $: 
\begin{equation}\label{bsdeY1}
\left\{
\begin{split}
-dY_s^{1,\theta,\gamma}&=F(s,X_s^{\tau,\xi;\theta,\gamma},Y_s^{1,\theta,\gamma},Z_s^{1,\theta,\gamma},\theta_s,\gamma_s)ds-Z_s^{1,\theta,\gamma}dW_s\,,\\
Y_{\tau+\delta\wedge T_0}^{1,\theta,\gamma}&=0\,.
\end{split}
\right.
\end{equation}
It is easy to check that $F(s,X_s^{\tau,\xi;\theta,\gamma},y,z,\theta_s,\gamma_s)$ is uniformly Lipschitz in $(y,z)$ for each $\theta\in \Theta$, $\gamma\in\Gamma$, and $F(\cdot,\cdot,0,0,\cdot,\cdot)\in\mathcal{L}^2([0,T_0];\mathbb{R})$. Thus, BSDE \eqref{bsdeY1} has a unique solution by the standard BSDE theory (see \cite{Hu_2002,ParPeng_90} for instance). On the other hand, we would note that the function $F(s,x,y,z,\theta_s,\gamma_s)$ is not uniformly Lipschitz in $x$ but there exists some $\zeta^{\phi}\in \cL^4_{\text{loc}}([0,T);
\bR)$ (recalling \eqref{R-Lip-const}) such that for all $x,\bar x \in \bR^d, y\in\bR, z\in\bR^m, $ and all $(\theta,\gamma)\in\Theta\times \Gamma$,
\begin{align}
|F(t,x,y,z,\theta_t,\gamma_t)-F(t,\bar x,y,z,\theta_t,\gamma_t)|\leq \zeta^{\phi}_t |x-\bar x|, \quad\text{for a.e. }(
\omega,t)\in \Omega\times [0,T). \label{Lip-const}
\end{align}
\begin{lem}\label{relationshipYGphi}
For any stopping time $\tau$ and any $\xi\in L^2(\Omega,\sF_\tau;\mathbb{R}^d)$, for any $s\in[\tau,\tau+\delta\wedge T_0]$, we have the following relationship: for $0\leq \tau\leq T_0$,
\begin{equation}
Y_s^{1,\theta,\gamma}=G_{s,\tau+\delta\wedge T_0}^{\tau,\xi;\theta,\gamma}[\phi(\tau+\delta\wedge T_0,X_{\tau+\delta\wedge T_0}^{\tau,\xi;\theta,\gamma})]-\phi(s,X_s^{\tau,\xi;\theta,\gamma}), \quad \text{a.s.}
\end{equation}
\end{lem}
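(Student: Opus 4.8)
The plan is to verify that both sides of the claimed identity, viewed as processes in $s$ on $[\tau,\tau+\delta\wedge T_0]$, solve the same BSDE, and then invoke uniqueness. First I would introduce the candidate process
$$
\widetilde Y_s := G_{s,\tau+\delta\wedge T_0}^{\tau,\xi;\theta,\gamma}\bigl[\phi(\tau+\delta\wedge T_0,X_{\tau+\delta\wedge T_0}^{\tau,\xi;\theta,\gamma})\bigr]-\phi(s,X_s^{\tau,\xi;\theta,\gamma}),
$$
and compute its dynamics. By definition of the backward semigroup in \eqref{semigroup}--\eqref{BSDE-semigroup}, the first term $\overline Y_s:=G_{s,\tau+\delta\wedge T_0}^{\tau,\xi;\theta,\gamma}[\,\cdot\,]$ together with some $\overline Z$ satisfies
$$
-d\overline Y_s = f\bigl(s,X_s^{\tau,\xi;\theta,\gamma},\overline Y_s,\overline Z_s,\theta_s,\gamma_s\bigr)\,ds-\overline Z_s\,dW_s,\qquad \overline Y_{\tau+\delta\wedge T_0}=\phi(\tau+\delta\wedge T_0,X_{\tau+\delta\wedge T_0}^{\tau,\xi;\theta,\gamma}).
$$
For the second term I would apply the It\^o--Wentzell (generalized It\^o) formula to $\phi(s,X_s^{\tau,\xi;\theta,\gamma})$, using that $\phi\in\mathscr C_{\sF}^3$ has the semimartingale decomposition in Definition \ref{defn-testfunc} with $ds$-drift $-\mathfrak d_s\phi$ and $dW_s$-coefficient $-\mathfrak d_\omega\phi$, while $X^{\tau,\xi;\theta,\gamma}$ is driven by $b$ and $\sigma$ from \eqref{state-proces-control}. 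This produces
$$
d\phi(s,X_s) = \Bigl(-\mathfrak d_s\phi + b'D\phi + \tfrac12\mathrm{tr}(\sigma\sigma'D^2\phi) + \mathrm{tr}(\sigma D\mathfrak d_\omega\phi)\Bigr)(s,X_s)\,ds + \bigl(\sigma'D\phi+\mathfrak d_\omega\phi\bigr)(s,X_s)\,dW_s,
$$
with arguments $(s,X_s,\theta_s,\gamma_s)$ suppressed in $b,\sigma$.

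Subtracting, $\widetilde Y$ has the $dW_s$-coefficient $\widetilde Z_s:=\overline Z_s-(\sigma'D\phi+\mathfrak d_\omega\phi)(s,X_s)$ and terminal value $\widetilde Y_{\tau+\delta\wedge T_0}=0$ (since $\phi$ cancels there), matching the terminal condition in \eqref{bsdeY1}. For the drift, I would collect the $ds$-terms: the part coming from $\phi$ contributes $\mathfrak d_s\phi - b'D\phi - \tfrac12\mathrm{tr}(\sigma\sigma'D^2\phi) - \mathrm{tr}(\sigma D\mathfrak d_\omega\phi)$, and the part from $\overline Y$ contributes $f(s,X_s,\overline Y_s,\overline Z_s,\theta_s,\gamma_s)$. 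Writing $\overline Y_s = \widetilde Y_s + \phi(s,X_s)$ and $\overline Z_s = \widetilde Z_s + (\sigma'D\phi+\mathfrak d_\omega\phi)(s,X_s)$, the $f$-term becomes $f\bigl(s,X_s,\widetilde Y_s+\phi(s,X_s),\widetilde Z_s+\sigma'D\phi(s,X_s)+\mathfrak d_\omega\phi(s,X_s),\theta_s,\gamma_s\bigr)$. Comparing with the definition of $F$ just before \eqref{bsdeY1}, the drift of $\widetilde Y$ is exactly $F(s,X_s,\widetilde Y_s,\widetilde Z_s,\theta_s,\gamma_s)$, so $(\widetilde Y,\widetilde Z)$ solves \eqref{bsdeY1}. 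By the well-posedness of \eqref{bsdeY1} recorded right after its statement (the generator is Lipschitz in $(y,z)$ and $F(\cdot,\cdot,0,0,\cdot,\cdot)\in\mathcal L^2([0,T_0];\bR)$), the solution is unique, hence $\widetilde Y_s=Y_s^{1,\theta,\gamma}$ a.s.\ for all $s$, which is the asserted identity.

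The main obstacle I anticipate is justifying the It\^o--Wentzell formula for $\phi(s,X_s)$ in this non-Markovian setting: $\phi$ is only in $\mathscr C_{\sF}^3$, so $D^2\phi$, $D\mathfrak d_\omega\phi$ lie in the local spaces $\cS^4_{\mathrm{loc}}([0,T);C_b^1)$ etc., and one must check that all the regularity needed (continuity in time, $C_b^2$ in space for the relevant coefficients, square-integrability of the cross-variation term $\mathrm{tr}(\sigma D\mathfrak d_\omega\phi)$) is available on the compact time interval $[\tau,\tau+\delta\wedge T_0]\subset[0,T_0]$ with $T_0<T$; this is precisely why the statement restricts to $[\tau,\tau+\delta\wedge T_0]$ rather than up to $T$. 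A secondary, more routine point is handling the random initial time $\tau$ and the stopping at $\tau+\delta\wedge T_0$, which is standard via the flow property in Lemma \ref{lem-SDE}(i) and localization; I would not dwell on it. Everything else is bookkeeping: matching coefficients term by term and reading off that the combined drift is $F$.
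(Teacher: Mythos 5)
Your approach is exactly the paper's: its proof consists of the single observation that applying the It\^o--Kunita formula to $\phi(s,X_s^{\tau,\xi;\theta,\gamma})$ shows that $\overline Y^{\tau,\xi;\theta,\gamma}-\phi(\cdot,X^{\tau,\xi;\theta,\gamma})$ solves BSDE \eqref{bsdeY1}, whence the identity follows from uniqueness of that BSDE's solution. One bookkeeping caveat: by Definition \ref{defn-testfunc} the forward differential of $\phi$ at fixed $x$ is $+\mathfrak{d}_s\phi\,ds+\mathfrak{d}_\omega\phi\,dW_s$, so the drift of $d\phi(s,X_s)$ carries $+\mathfrak{d}_s\phi$ (not $-\mathfrak{d}_s\phi$), and since $-d\widetilde Y=-d\overline Y+d\phi(s,X_s)$ the generator picks up that drift with a plus sign rather than the negated version you wrote when "collecting the $ds$-terms"; done consistently, the generator is exactly $f+\mathfrak{d}_s\phi+b'D\phi+\tfrac12\mathrm{tr}(\sigma\sigma'D^2\phi)+\mathrm{tr}(\sigma D\mathfrak{d}_\omega\phi)=F$, so your conclusion stands.
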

\begin{proof}
Recalling the definition of $G_{s,\tau+\delta\wedge T_0}^{\tau,\xi;\theta,\gamma}[\phi(\tau+\delta\wedge T_0,X_{\tau+\delta\wedge T_0}^{\tau,\xi;\theta,\gamma})]$, we only have to prove that $Y_s^{\tau,\xi;\theta,\gamma}-\phi(s,X_s^{\tau,\xi;\theta,\gamma})=Y_s^{1,\theta,\gamma}$. This can be obtained by applying the It\^o-Kunita formula (see Lemma 4.1 in \cite{qiu2017viscosity}) to $\phi(s,X_s^{\tau,\xi;\theta,\gamma})$. 
\end{proof}

Now we consider the following BSDE in which the driving process $X^{\tau,\xi;\theta,\gamma}$ is replaced by the initial value $\xi$: 
\begin{equation}\label{bsdeY2}
\left\{
\begin{split}
-dY_s^{2,\theta,\gamma}&=F(s,\xi,Y_s^{2,\theta,\gamma},Z_s^{2,\theta,\gamma},\theta_s,\gamma_s)ds-Z_s^{2,\theta,\gamma}dW_s,\quad s\in[\tau,\tau+\delta\wedge T_0),\\
Y_{\tau+\delta\wedge T_0}^{2,\theta,\gamma}&=0, 
\end{split}
\right.
\end{equation}
where $\theta\in\Theta$ and $\gamma\in\Gamma$. 
For the difference of $Y^{1,\theta,\gamma}$ and $Y^{2,\theta,\gamma}$ we have the following estimate whose proof is postponed to the Appendix.
\begin{lem}\label{lemdifferenceofY}
For each $\theta\in\Theta$ and $\gamma\in\Gamma$, we have 
\begin{equation}\label{differenceofY}
\big|Y_\tau^{1,\theta,\gamma}-Y_\tau^{2,\theta,\gamma}\big|\leq  \delta^{\frac{5}{4}} \cdot C(1+|\xi|) \left( E_{\sF_\tau}\left[ \int_0^{T_0}\big| \zeta^{\phi}_t\big|^4  dt  \right]\right)^{1/4},\quad \text{a.s.},
\end{equation}
where $C$ is independent of $\delta$, $T_0$, and the control processes $\theta$ and $\gamma$. 
\end{lem}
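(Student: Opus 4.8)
The plan is to write the difference $\big(Y^{1,\theta,\gamma}-Y^{2,\theta,\gamma},\,Z^{1,\theta,\gamma}-Z^{2,\theta,\gamma}\big)$ as the solution of a single BSDE on $[\tau,\tau+\delta\wedge T_0]$ with zero terminal value, whose generator is Lipschitz in its own $(y,z)$-arguments with constant $L$ and whose free term measures precisely the effect of freezing the forward process $X^{\tau,\xi;\theta,\gamma}$ at $\xi$; the free term is then estimated by combining the $1/2$--Hölder time regularity of $X^{\tau,\xi;\theta,\gamma}$ from Lemma~\ref{lem-SDE} with the bound \eqref{Lip-const} (where $\zeta^{\phi}\in\cL^4_{\text{loc}}([0,T);\bR)$). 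All estimates are carried out conditionally on $\sF_\tau$, under which $\xi$ acts as a constant, so that the conditional moment bounds of Lemma~\ref{lem-SDE} apply a.s.\ for every exponent even though only $\xi\in L^2$ is assumed.

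Subtracting \eqref{bsdeY1} from \eqref{bsdeY2} and decomposing
\begin{align*}
&F(s,X_s^{\tau,\xi;\theta,\gamma},Y_s^{1,\theta,\gamma},Z_s^{1,\theta,\gamma},\theta_s,\gamma_s)-F(s,\xi,Y_s^{2,\theta,\gamma},Z_s^{2,\theta,\gamma},\theta_s,\gamma_s)\\
&\quad=\Big[F(s,X_s^{\tau,\xi;\theta,\gamma},Y_s^{1,\theta,\gamma},Z_s^{1,\theta,\gamma},\theta_s,\gamma_s)-F(s,X_s^{\tau,\xi;\theta,\gamma},Y_s^{2,\theta,\gamma},Z_s^{2,\theta,\gamma},\theta_s,\gamma_s)\Big]\\
&\qquad+\underbrace{\Big[F(s,X_s^{\tau,\xi;\theta,\gamma},Y_s^{2,\theta,\gamma},Z_s^{2,\theta,\gamma},\theta_s,\gamma_s)-F(s,\xi,Y_s^{2,\theta,\gamma},Z_s^{2,\theta,\gamma},\theta_s,\gamma_s)\Big]}_{=:\,\ell_s},
\end{align*}
the first bracket is bounded in absolute value by $L\big(|Y_s^{1,\theta,\gamma}-Y_s^{2,\theta,\gamma}|+|Z_s^{1,\theta,\gamma}-Z_s^{2,\theta,\gamma}|\big)$ thanks to the uniform Lipschitz continuity of $F$ in $(y,z)$, and $|\ell_s|\le\zeta^{\phi}_s\,|X_s^{\tau,\xi;\theta,\gamma}-\xi|$ by \eqref{Lip-const}. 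The standard a priori estimate for BSDEs (cf.\ Appendix~B and the references therein), taken conditionally on $\sF_\tau$ on $[\tau,\tau+\delta\wedge T_0]$, then yields
$$
\big|Y_\tau^{1,\theta,\gamma}-Y_\tau^{2,\theta,\gamma}\big|^2\le C\,E_{\sF_\tau}\!\left[\Big(\int_\tau^{\tau+\delta\wedge T_0}\zeta^{\phi}_s\,\big|X_s^{\tau,\xi;\theta,\gamma}-\xi\big|\,ds\Big)^{2}\right],\qquad\text{a.s.},
$$
with $C=C(L,T)$ independent of $\delta$, $T_0$, $\theta$, and $\gamma$.

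It remains to estimate the right-hand side. Hölder's inequality in time with exponents $(4,\tfrac{4}{3})$ followed by Jensen's inequality gives, pathwise,
$$
\Big(\int_\tau^{\tau+\delta\wedge T_0}\zeta^{\phi}_s\,\big|X_s^{\tau,\xi;\theta,\gamma}-\xi\big|\,ds\Big)^{2}\le\delta^{1/2}\Big(\int_\tau^{\tau+\delta\wedge T_0}|\zeta^{\phi}_s|^4\,ds\Big)^{1/2}\int_\tau^{\tau+\delta\wedge T_0}\big|X_s^{\tau,\xi;\theta,\gamma}-\xi\big|^2\,ds;
$$
taking $E_{\sF_\tau}[\cdot]$, using the conditional Cauchy--Schwarz inequality, Jensen's inequality once more, and the bound $E_{\sF_\tau}\big[|X_s^{\tau,\xi;\theta,\gamma}-\xi|^4\big]\le C(1+|\xi|^4)(s-\tau)^2$ from part~(iii) of Lemma~\ref{lem-SDE} (valid since $X_\tau^{\tau,\xi;\theta,\gamma}=\xi$), one obtains
$$
E_{\sF_\tau}\!\left[\Big(\int_\tau^{\tau+\delta\wedge T_0}\zeta^{\phi}_s\,\big|X_s^{\tau,\xi;\theta,\gamma}-\xi\big|\,ds\Big)^{2}\right]\le C\,\delta^{5/2}\,(1+|\xi|^2)\,\Big(E_{\sF_\tau}\!\Big[\int_0^{T_0}|\zeta^{\phi}_t|^4\,dt\Big]\Big)^{1/2}.
$$
Substituting this into the previous display and taking square roots gives \eqref{differenceofY}.

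This lemma involves no deep difficulty; the one point requiring care is that $F$ is \emph{not} uniformly Lipschitz in the spatial variable, so it cannot be handled as a standard Lipschitz driver. The remedy, built into the decomposition above, is to push the spatial perturbation into the free term of a linear BSDE and then carry the random, merely $\cL^4$-in-time coefficient $\zeta^{\phi}$ through the estimates; the precise bookkeeping of the Hölder and integrability exponents against the $O((s-\tau)^{1/2})$ control of $\big(E_{\sF_\tau}[|X_s^{\tau,\xi;\theta,\gamma}-\xi|^2]\big)^{1/2}$ is what forces the exponent $5/4$. Checking that all constants depend only on $L$ and $T$ (hence are independent of $\delta$, $T_0$, $\theta$, $\gamma$) is immediate from the corresponding uniformity in Lemma~\ref{lem-SDE} and in the $(y,z)$-Lipschitz constant of $F$.
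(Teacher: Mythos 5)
Your proof is correct and follows essentially the same route as the paper: the same splitting of the generator difference into an $L$-Lipschitz part in $(y,z)$ and a free term bounded by $\zeta^{\phi}_s|X_s^{\tau,\xi;\theta,\gamma}-\xi|$ via \eqref{Lip-const}, the same conditional moment bound on the forward increment from Lemma \ref{lem-SDE}, and the same H\"older bookkeeping producing $\delta^{5/4}$. The only (cosmetic) difference is that you invoke the pointwise part $|y^1_t-y^2_t|^2\le\cdots$ of Proposition \ref{prop-BSDE-comp}(iii) directly, whereas the paper first derives the $\cL^2$-in-time bound on the differences and then estimates $Y^{1,\theta,\gamma}_\tau-Y^{2,\theta,\gamma}_\tau$ as a conditional expectation of the generator difference, absorbing an extra $\delta^{7/4}$ term.
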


\begin{lem}\label{lemmaonY0}
Let $(Y^0,Z^0)$ be the solution of the following BSDE: 
\begin{equation}\label{Y0}
\left\{
\begin{split}&-dY^0_s=F_0(s,\xi,Y_s^0,Z_s^0)ds-Z_s^0dW_s,\quad s\in[\tau,\tau+\delta \wedge T_0),\\
&Y^0_{\tau+\delta \wedge T_0 }=0,
\end{split}
\right.
\end{equation}
where $\xi\in L^2(\Omega,\sF_\tau;\mathbb{R}^d)$ and $F_0$ is defined as 
\begin{equation}\label{F0}
F_0(s,x,y,z)=\esssup_{\theta\in\Theta_0}\essinf_{\gamma\in\Gamma_0}F(s,x,y,z,\theta,\gamma).
\end{equation}
Then,  
\begin{equation}
\esssup_{\theta\in\Theta}\essinf_{\gamma\in\Gamma}Y_\tau^{2,\theta,\gamma}=Y^0_\tau,\quad \text{a.s.}
\end{equation}
\end{lem}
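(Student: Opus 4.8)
### Proof Proposal for Lemma \ref{lemmaonY0}

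The plan is to prove the two inequalities $\esssup_{\theta\in\Theta}\essinf_{\gamma\in\Gamma}Y_\tau^{2,\theta,\gamma}\le Y^0_\tau$ and $\esssup_{\theta\in\Theta}\essinf_{\gamma\in\Gamma}Y_\tau^{2,\theta,\gamma}\ge Y^0_\tau$ separately, exploiting that in BSDE \eqref{bsdeY2} the driver depends on the controls only through their pointwise (measurable) values at each time $s$, with the state frozen at $\xi$. The key structural fact is that $F_0(s,\xi,y,z)=\esssup_{\theta\in\Theta_0}\essinf_{\gamma\in\Gamma_0}F(s,\xi,y,z,\theta,\gamma)$ is realized \emph{pointwise} in $(\omega,s)$, so there is no strategy-versus-control subtlety here: the inner sup/inf is over the constant sets $\Theta_0,\Gamma_0$, not over processes with a nonanticipativity constraint.

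\textbf{Upper bound.} For the inequality $\le$, first I would show that for each fixed $\theta\in\Theta$ and $\gamma\in\Gamma$, the driver satisfies $F(s,\xi,y,z,\theta_s,\gamma_s)\le F_0(s,\xi,y,z)$ for a.e.\ $(\omega,s)$, since $F(s,\xi,y,z,\theta_s,\gamma_s)\le \esssup_{\theta'\in\Theta_0}F(s,\xi,y,z,\theta',\gamma_s)$ and then taking the essential infimum over $\gamma'\in\Gamma_0$ only decreases the right side while $\gamma_s\in\Gamma_0$. Wait — that direction is the wrong way for the inf. The correct argument: $\essinf_{\gamma'\in\Gamma_0}F(s,\xi,y,z,\theta_s,\gamma')\le F(s,\xi,y,z,\theta_s,\gamma_s)$ is false in general; instead use $F(s,\xi,y,z,\theta_s,\gamma_s)\le \esssup_{\theta'\in\Theta_0}F(s,\xi,y,z,\theta',\gamma_s)$, and then $\essinf$ over $\gamma'$ gives $\esssup_{\theta'}\essinf_{\gamma'}F\le\esssup_{\theta'}F(\cdot,\gamma_s)$ is again the wrong direction. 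The clean route: fix $\theta\in\Theta$; then $F(s,\xi,y,z,\theta_s,\gamma_s)\le \esssup_{\theta'\in\Theta_0}\essinf_{\gamma'\in\Gamma_0}F(s,\xi,y,z,\theta',\gamma')$ fails because we dropped $\gamma_s$. So one must instead choose $\gamma$ well. Concretely: fix $\theta\in\Theta$. By a measurable selection argument (Appendix A), pick $\gamma^\theta\in\Gamma$ with $F(s,\xi,y,z,\theta_s,\gamma^\theta_s)\le \essinf_{\gamma'\in\Gamma_0}F(s,\xi,y,z,\theta_s,\gamma')+\varepsilon \le F_0(s,\xi,y,z)+\varepsilon$ (the selection must be made simultaneously in $(y,z)$, using the Lipschitz continuity \eqref{Lip-const} and a countable dense argument in $(y,z)$). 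Then the comparison theorem for BSDEs (Proposition \ref{prop-BSDE-comp}) applied to \eqref{bsdeY2} with control $(\theta,\gamma^\theta)$ versus \eqref{Y0} gives $Y_\tau^{2,\theta,\gamma^\theta}\le Y^0_\tau + C\varepsilon$, hence $\essinf_{\gamma\in\Gamma}Y_\tau^{2,\theta,\gamma}\le Y^0_\tau+C\varepsilon$; taking $\esssup$ over $\theta$ and letting $\varepsilon\to0$ yields $\le$.

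\textbf{Lower bound.} For $\ge$, by measurable selection choose $\theta^*\in\Theta$ such that $\essinf_{\gamma'\in\Gamma_0}F(s,\xi,y,z,\theta^*_s,\gamma')\ge F_0(s,\xi,y,z)-\varepsilon$ for a.e.\ $(\omega,s)$ (again a simultaneous selection in $(y,z)$ via the Lipschitz bound and a countable dense set, patched together with an $\sF_s$-measurable partition as in the proof of Theorem \ref{thm-DPP}). Then for \emph{every} $\gamma\in\Gamma$ one has $F(s,\xi,y,z,\theta^*_s,\gamma_s)\ge \essinf_{\gamma'\in\Gamma_0}F(s,\xi,y,z,\theta^*_s,\gamma')\ge F_0(s,\xi,y,z)-\varepsilon$, so the comparison theorem gives $Y_\tau^{2,\theta^*,\gamma}\ge Y^0_\tau-C\varepsilon$ for all $\gamma$, hence $\essinf_{\gamma\in\Gamma}Y_\tau^{2,\theta^*,\gamma}\ge Y^0_\tau-C\varepsilon$ and therefore $\esssup_{\theta}\essinf_{\gamma}Y_\tau^{2,\theta,\gamma}\ge Y^0_\tau-C\varepsilon$; let $\varepsilon\to0$.

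\textbf{Main obstacle.} The routine parts are the BSDE comparison estimates; the delicate point is the measurable selection carried out \emph{uniformly in the unknowns $(y,z)$}. Since $F_0$ is obtained by an essential sup/inf over the compact sets $\Theta_0,\Gamma_0$ with a driver that is Lipschitz in $(y,z)$ uniformly in $(\theta,\gamma)$, one should first select on a countable dense subset $D\subset\bR\times\bR^m$ (using the measurable selection theorem of Appendix A for each point of $D$, then intersecting the null sets), and then extend to all $(y,z)$ by the Lipschitz continuity \eqref{Lip-const}, absorbing the discretization error into $\varepsilon$. One also needs the selected control to lie in $\Theta$ (resp.\ $\Gamma$), i.e.\ to be $\{\sF_t\}$-adapted and $\Theta_0$-valued, which is exactly what the predictable measurable selection theorem delivers. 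Once this is in place, the comparison theorem closes both inequalities and the proof is complete.
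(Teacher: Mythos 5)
Your overall architecture (two one-sided bounds via the comparison theorem plus measurable selection) is sound, but the step you flag as the ``main obstacle'' is not merely delicate --- as you state it, it is false, and this is a genuine gap. You ask for a single control process $\gamma^\theta\in\Gamma$ satisfying $F(s,\xi,y,z,\theta_s,\gamma^\theta_s)\le \essinf_{\gamma'\in\Gamma_0}F(s,\xi,y,z,\theta_s,\gamma')+\varepsilon$ \emph{simultaneously for all} $(y,z)$ (and likewise a $\theta^*$ that is $\varepsilon$-maximizing uniformly in $(y,z)$). No such control exists in general: the minimizer over $\Gamma_0$ genuinely depends on $(y,z)$ (think of a driver containing a term like $\gamma\cdot z$ with $\gamma$ ranging over a symmetric set), and your proposed remedy --- select on a countable dense set $D$ of $(y,z)$-values and ``extend by Lipschitz continuity'' --- does not produce a single $(y,z)$-free process; it produces a different selected control for each point of $D$, i.e.\ precisely a measurable map of $(s,y,z)$, not an element of $\Gamma$. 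Intersecting null sets and absorbing discretization errors cannot remove the $(y,z)$-dependence.

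The missing idea, which is how the paper closes this step, is to keep the $(y,z)$-dependence in the selection and then \emph{compose with the solution of the aggregated BSDE}. Concretely, the paper first treats the inner infimum: with $F_1(s,x,y,z,\theta)=\essinf_{\gamma\in\Gamma_0}F(s,x,y,z,\theta,\gamma)$ and $(Y^{3,\theta},Z^{3,\theta})$ the solution of the BSDE driven by $F_1$, Theorem \ref{thm-MS} (with $\Gamma_0$ compact, so the infimum is attained exactly, no $\varepsilon$ needed) yields a measurable $\hat\mu(s,x,y,z,\theta)$ realizing the infimum; setting $\tilde\mu_s:=\hat\mu(s,\xi,Y^{3,\theta}_s,Z^{3,\theta}_s,\theta_s)$ gives an admissible $\tilde\mu\in\Gamma$ along which the two drivers coincide \emph{on the solution path}, so uniqueness of BSDE solutions gives $Y^{3,\theta}=Y^{2,\theta,\tilde\mu}$ and hence $Y^{3,\theta}_\tau=\essinf_{\gamma}Y^{2,\theta,\gamma}_\tau$ (the $\le$ half being the easy comparison $F_1\le F$). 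The same two-stage argument with $F_0=\esssup_{\theta\in\Theta_0}F_1$ finishes the proof. If you prefer your $\varepsilon$-comparison route, it can be repaired the same way: note that Proposition \ref{prop-BSDE-comp}(ii)--(iii) only requires the driver inequality \emph{along the solution of one of the two equations}, so it suffices to define $\gamma^\theta_s=\hat\mu(s,\xi,Y^0_s,Z^0_s,\theta_s)$ (resp.\ $\theta^*_s=\hat\theta(s,\xi,Y^0_s,Z^0_s)$) and verify the inequality only at $(y,z)=(Y^0_s,Z^0_s)$ --- but the uniform-in-$(y,z)$ selection must be abandoned.
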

\begin{proof}
It is obvious that $F_0(s,x,y,z)$ is Lipschitz in $(y,z)$ uniformly w.r.t. $(s,x)$, a.s. Hence, BSDE \eqref{Y0} admits a unique solution. We first introduce the function 
\begin{equation}\label{F1}
F_1(s,x,y,z,\theta)=\essinf_{\gamma\in\Gamma_0}F(s,x,y,z,\theta,\gamma),\ (s,x,y,z,\theta)\in[0,T_0]\times\bR^d\times\bR\times\bR^m\times\Theta_0,
\end{equation}
and consider the BSDE
\begin{equation}\label{Y3}\left\{\begin{split}
&-dY^{3,\theta}_s=F_1(s,\xi,Y_s^{3,\theta},Z_s^{3,\theta},\theta_s)ds-Z_s^{3,\theta}dW_s,\quad s\in[\tau,\tau+\delta \wedge T_0),\\&Y_{\tau+\delta \wedge T_0}^{3,\theta}=0,\end{split}\right.
\end{equation}
for $\theta\in\Theta$. Since, for every $\theta\in\Theta$, $F_1(s,x,y,z,\theta)$ is Lipschitz in $(y,z)$, the solution $(Y^{3,\theta},Z^{3,\theta})$ uniquely exists. Moreover, 
$$Y^{3,\theta}_\tau=\essinf_{\gamma\in\Gamma}Y_\tau^{2,\theta,\gamma},\ \text{a.s.},\ \mbox{for any}\ \theta\in\Theta.$$
Indeed, from the definition of $F_1$ and Proposition \ref{prop-BSDE-comp}(ii) (comparison theorem), we have 
$$Y^{3,\theta}_\tau\leq\essinf_{\gamma\in\Gamma}Y_\tau^{2,\theta,\gamma},\ \text{a.s.},\ \mbox{for any}\ \theta\in\Theta.$$
On the other hand, by Theorem \ref{thm-MS} there exists a measurable function $\hat{\mu}:\Omega\times [\tau,\tau+\delta\wedge T_0]\times\bR^d\times\bR\times\bR^m\times\Theta\rightarrow\Gamma$ such that 
$$F_1(s,x,y,z,\theta)=F(s,x,y,z,\theta,\hat{\mu}(s,x,y,z,\theta)),\ \ \mbox{for any}\ (s,x,y,z,\theta).$$
Put $$\tilde{\mu}_s:=\hat{\mu}(s,\xi,Y_s^{3,\theta},Z_s^{3,\theta},\theta_s),\quad s\in[\tau,\tau+\delta\wedge T_0],$$
and we observe that $\tilde{\mu}\in\Gamma$ and 
$$F_1(s,\xi,Y_s^{3,\theta},Z_s^{3,\theta},\theta_s)=F(s,\xi,Y_s^{3,\theta},Z_s^{3,\theta},\theta_s,\tilde{\mu}_s),\quad s\in[\tau,\tau+\delta \wedge T _0].$$
Then, the uniqueness of solution to the BSDE yields that $(Y^{3,\theta},Z^{3,\theta})=(Y^{2,\theta,\tilde{\mu}},Z^{2,\theta,\tilde{\mu}})$, a.s., and in particular, $Y^{3,\theta}_\tau=Y^{2,\theta,\tilde{\mu}}_\tau$, a.s. for any $\theta\in\Theta$. This further implies that 
$$Y^{3,\theta}_\tau=\essinf_{\gamma\in\Gamma}Y_\tau^{2,\theta,\gamma},\ \text{a.s.},\ \mbox{for any}\ \theta\in\Theta.$$
Finally, since $F_0(s,x,y,z)=\esssup_{\theta\in\Theta_0}F_1(s,x,y,z,\theta)$, a similar argument as above gives that 
$$Y^0_\tau=\esssup_{\theta\in\Theta}Y^{3,\theta}_\tau=\esssup_{\theta\in\Theta}\essinf_{\gamma\in\Gamma}Y_\tau^{2,\theta,\gamma},\quad \text{a.s.}$$
The proof is complete. 
\end{proof}

\begin{lem}\label{estimateYZ}
For each $\theta\in\Theta$ and $\gamma\in\Gamma$, there holds
\begin{equation*}
E_{\sF_\tau}\left[\int_\tau^{\tau+\delta\wedge T_0}\big|Y_s^{2,\theta,\gamma}\big|ds\right]+E_{\sF_\tau}\left[\int_\tau^{\tau+\delta\wedge T_0}\big|Z_s^{2,\theta,\gamma}\big|ds\right]\leq  \delta^{\frac{5}{4}} \cdot C 
\left( E_{\sF_\tau}\left[ \int_0^{T_0}\big| \zeta^{\phi}_t\big|^4 dt   \right]\right)^{1/4}
\text{a.s.,}
\end{equation*}
where $C$ is a constant independent of $\delta,\,T_0,$ and the controls $\theta$ and $\gamma$. Moreover, we have
\begin{equation*}
E_{\sF_\tau}\left[\int_\tau^{\tau+\delta\wedge T_0}\big|Y_s^{0}\big|ds\right]+E_{\sF_\tau}\left[\int_\tau^{\tau+\delta\wedge T_0}\big|Z_s^{0 }\big|ds\right]\leq  \delta^{\frac{5}{4}} \cdot C 
\left( E_{\sF_\tau}\left[\int_0^{T_0}\big| \zeta^{\phi}_t\big|^4dt\right]\right)^{1/4}
\text{a.s.,}
\end{equation*}
where $C$ is independent of $\delta$ and $T_0$.
\end{lem}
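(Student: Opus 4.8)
The plan is to reduce the whole statement to a single conditional a priori estimate for the BSDE \eqref{bsdeY2} (and its analogue \eqref{Y0}), combined with a couple of Hölder-type inequalities in the time variable. First I would record two structural facts about the generator $F$. Evaluating its definition at $y=z=0$ gives
\[
F(s,x,0,0,\theta,\gamma)=\mathfrak{d}_s\phi(s,x)+\mathbb{L}^{\theta,\gamma}\bigl(D^2\phi,D\mathfrak{d}_{\omega}\phi,D\phi,\phi,\mathfrak{d}_{\omega}\phi\bigr)(s,x),
\]
so \eqref{H-bounded} yields $\bigl|F(s,\xi,0,0,\theta_s,\gamma_s)\bigr|\le\zeta^{\phi}_s$ for a.e.\ $(\omega,s)$ and every $\theta\in\Theta$, $\gamma\in\Gamma$; moreover, as already noted after \eqref{bsdeY1} and in the proof of Lemma \ref{lemmaonY0}, both $F(s,\xi,\cdot,\cdot,\theta_s,\gamma_s)$ and $F_0(s,\xi,\cdot,\cdot)$ are Lipschitz in $(y,z)$ with a constant depending only on $L$, and $\bigl|F_0(s,\xi,0,0)\bigr|\le\esssup_{\theta\in\Theta_0,\gamma\in\Gamma_0}\bigl|F(s,\xi,0,0,\theta,\gamma)\bigr|\le\zeta^{\phi}_s$. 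Hence it suffices to prove the bound for $(Y^{2,\theta,\gamma},Z^{2,\theta,\gamma})$, since the bound for $(Y^0,Z^0)$ then follows by the identical computation with $F(\cdot,\cdot,\cdot,\cdot,\theta_s,\gamma_s)$ replaced by $F_0$.

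Write $\sigma:=\tau+\delta\wedge T_0$. Applying the standard conditional a priori estimate for BSDEs with zero terminal value, in the form that controls the solution by the $L^1$-in-time norm of the free term (see \cite{El-Karoui-Peng-Quenez-2001,ParPeng_90,Peng-DPP-1997}), I obtain
\[
E_{\sF_\tau}\!\left[\sup_{s\in[\tau,\sigma]}\bigl|Y^{2,\theta,\gamma}_s\bigr|^2+\int_\tau^\sigma\bigl|Z^{2,\theta,\gamma}_s\bigr|^2\,ds\right]\le C\,E_{\sF_\tau}\!\left[\Bigl(\int_\tau^\sigma\zeta^{\phi}_s\,ds\Bigr)^{2}\right],
\]
with $C$ depending only on $L$ and $T$; to keep $C$ uniform over $\delta\in(0,1)$ one carries the exponential weight $e^{\beta s}$, $\beta=\beta(L)$, in the It\^o computation rather than absorbing the Lipschitz terms through a Gronwall factor. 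Two applications of Hölder's inequality in $s$ give
\[
\Bigl(\int_\tau^\sigma\zeta^{\phi}_s\,ds\Bigr)^{2}\le(\sigma-\tau)^{3/2}\Bigl(\int_\tau^\sigma|\zeta^{\phi}_s|^{4}\,ds\Bigr)^{1/2}\le\delta^{3/2}\Bigl(\int_0^{T_0}|\zeta^{\phi}_s|^{4}\,ds\Bigr)^{1/2},
\]
so that, by the conditional Jensen inequality, the right-hand side of the a priori estimate is bounded by $C\,\delta^{3/2}\bigl(E_{\sF_\tau}[\int_0^{T_0}|\zeta^{\phi}_s|^{4}\,ds]\bigr)^{1/2}$.

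It then remains to convert these $L^2$-in-time bounds into $L^1$-in-time bounds by Cauchy--Schwarz. For the diffusion term,
\[
E_{\sF_\tau}\!\left[\int_\tau^\sigma\bigl|Z^{2,\theta,\gamma}_s\bigr|\,ds\right]\le\delta^{1/2}\Bigl(E_{\sF_\tau}\!\left[\int_\tau^\sigma\bigl|Z^{2,\theta,\gamma}_s\bigr|^2\,ds\right]\Bigr)^{1/2}\le\bigl(C\,\delta^{3/2}\bigr)^{1/2}\delta^{1/2}\Bigl(E_{\sF_\tau}\!\left[\int_0^{T_0}|\zeta^{\phi}_s|^{4}\,ds\right]\Bigr)^{1/4},
\]
which is the claimed $\delta^{5/4}$ bound; for the state term, $\int_\tau^\sigma|Y^{2,\theta,\gamma}_s|\,ds\le(\sigma-\tau)\sup_{s\in[\tau,\sigma]}\bigl|Y^{2,\theta,\gamma}_s\bigr|$ gives, after taking $E_{\sF_\tau}[\cdot]$ and conditional Jensen, a bound of order $\delta^{7/4}\bigl(E_{\sF_\tau}[\int_0^{T_0}|\zeta^{\phi}_s|^{4}\,ds]\bigr)^{1/4}$, which is $\le\delta^{5/4}\bigl(E_{\sF_\tau}[\int_0^{T_0}|\zeta^{\phi}_s|^{4}\,ds]\bigr)^{1/4}$ since $\delta<1$. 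Summing the two estimates gives the first assertion, and the second follows verbatim from \eqref{Y0}. The only genuine subtlety is keeping the a priori constant $C$ independent of $\delta$ (hence the exponential weight rather than a Gronwall estimate) and ordering the Hölder and Jensen steps so that the final power of $\delta$ comes out as exactly $\tfrac54$ with the $\zeta^{\phi}$-integral entering to the power $\tfrac14$; beyond that the argument is routine BSDE bookkeeping together with the handling of the stochastic interval $[\tau,\sigma]$.
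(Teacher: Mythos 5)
Your proposal is correct and follows essentially the same route as the paper: the conditional a priori BSDE estimate with the free term $|F(s,\xi,0,0,\theta_s,\gamma_s)|\leq\zeta^{\phi}_s$ (from \eqref{H-bounded}), Hölder in time to produce $\delta^{3/2}\big(\int_0^{T_0}|\zeta^{\phi}_s|^4\,ds\big)^{1/2}$, and a final Cauchy--Schwarz/Jensen step yielding the $\delta^{5/4}$ rate, with the $F_0$ case handled verbatim. The only cosmetic difference is that you bound $\int_\tau^\sigma|Y^{2,\theta,\gamma}_s|\,ds$ by $\delta\sup_s|Y^{2,\theta,\gamma}_s|$ (getting $\delta^{7/4}$) where the paper uses Cauchy--Schwarz on $\int|Y|^2$ (getting $\delta^{5/4}$ directly); both are fine.
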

The proof of Lemma \ref{estimateYZ} may be found in the appendix and now we come to the proof of Theorem \ref{existence}. 

\begin{proof}[Proof of Theorem \ref{existence}]
\textbf{Step 1.}  We show that $V$ is a viscosity {\bf supersolution}.
 First, in view of Lemma \ref{reg-value-funct}, we have $V\in\mathcal{S}^\infty(C_b(\bR^d))$. And it is obvious that $V(T,x)=\Phi(x)$ for all $x\in\mathbb{R}^d$, a.s. Let $K\geq L$ with $L$ being the constant in Assumption $\textbf{(A1)}$. For each $\phi\in\overline{\mathcal{G}}V(\tau,\xi;\Omega_\tau,K)$ with $\tau\in\mathcal{T}^0$, $\Omega_\tau\in\sF_\tau$, $P(\Omega_\tau)>0$, and $\xi\in L^0(\Omega_\tau,\sF_\tau;\bR^d)$,  by Remark \ref{rmk-eq-def-vs}, it is sufficient to verify that there holds for almost all $\omega\in\Omega_\tau$, 
$$\text{ess}\!\liminf_{s\rightarrow\tau^+}\{\mathfrak{d}_{s}\phi+\bH_-(D^2\phi,D\mathfrak{d}_{\omega}\phi, D\phi, \phi, \mathfrak{d}_{\omega}\phi )\}(s,\xi)\leq 0,$$
i.e.,
$$\text{ess}\!\liminf_{s\rightarrow\tau^+}F_0(s,\xi,0,0)\leq 0.$$
Let $\hat \tau $ be the stopping time corresponding to the fact $\phi\in \overline{\mathcal{G}}V(\tau,\xi;\Omega_\tau,K)$. 
We may choose $T_0\in (0,T)$ such that $\bP(\{\tau<T_0\}\cap \Omega_{\tau})>0$.  W.l.o.g., we assume $\hat\tau =T_0$ and $\Omega_{\tau}= \{\tau<T_0\}=\Omega$.

Thanks to Theorem \ref{thm-DPP}, we have for $\delta\in (0,1)$,
$$\phi(\tau,\xi)=V(\tau,\xi)=\essinf_{\mu\in\mathcal{M}}\esssup_{\theta\in\Theta}G_{\tau,\tau+\delta\wedge T_0 }^{\tau,\xi;\theta,\mu(\theta)}\left[V(\tau+\delta\wedge T_0 ,X_{\tau+\delta \wedge T_0 }^{\tau,\xi;\theta,\mu(\theta)})\right],\quad \text{a.s.}$$
In view of the relation  \eqref{dominate-G} and $\phi\in \overline{\mathcal{G}}V(\tau,\xi;\Omega_\tau,K)$, we have
\begin{align}
&
G_{\tau,\tau+\delta\wedge T_0 }^{\tau,\xi;\theta,\mu(\theta)}\left[
\phi(\tau+\delta\wedge T_0 ,X_{\tau+\delta\wedge T_0 }^{\tau,\xi;\theta,\mu(\theta)})
\right] 
-G_{\tau,\tau+\delta\wedge T_0 }^{\tau,\xi;\theta,\mu(\theta)}\left[V(\tau+\delta\wedge T_0 ,X_{\tau+\delta \wedge T_0 }^{\tau,\xi;\theta,\mu(\theta)})\right]
\nonumber \\
\leq&\,
\overline\cE^L_{\tau,\tau+\delta\wedge T_0 } \left[
\phi(\tau+\delta\wedge T_0 ,X_{\tau+\delta\wedge T_0 }^{\tau,\xi;\theta,\mu(\theta)})
-
V(\tau+\delta\wedge T_0 ,X_{\tau+\delta \wedge T_0 }^{\tau,\xi;\theta,\mu(\theta)})
\right]
\nonumber \\
\leq&\,
\overline\cE^L_{\tau,\tau+\delta\wedge T_0 } \left[\sup_{y\in\bR^d}
\left( \phi(\tau+\delta\wedge T_0 ,y)
-
V(\tau+\delta\wedge T_0 , y)
\right)
\right]
\nonumber \\
\leq&\,
\overline\cE^K_{\tau,\tau+\delta\wedge T_0 } \left[\sup_{y\in\bR^d}
\left( \phi(\tau+\delta\wedge T_0 ,y)
-
V(\tau+\delta\wedge T_0 , y)
\right)
\right]
\nonumber \\
\leq&\,
\phi(\tau,\xi)
-V(\tau,\xi)
\nonumber \\
=&\,0,\quad \text{a.s.} \label{compr-V-1}
\end{align}
Therefore, it holds that
\begin{align}
\essinf_{\mu\in\mathcal{M}}\esssup_{\theta\in\Theta}\big\{G_{\tau,\tau+\delta\wedge T_0 }^{\tau,\xi;\theta,\mu(\theta)}[
\phi(\tau+\delta\wedge T_0 ,X_{\tau+\delta\wedge T_0 }^{\tau,\xi;\theta,\mu(\theta)})
]-\phi(\tau,\xi)\big\}\leq 0,\quad \text{a.s.} \label{compr-phi-1}
\end{align}
Thus, by Lemma \ref{relationshipYGphi}, it follows that
$$\essinf_{\mu\in\mathcal{M}}\esssup_{\theta\in\Theta}Y_\tau^{1,\theta,\mu(\theta)}\leq 0,\quad \text{a.s.},$$
and further, by Lemma \ref{lemdifferenceofY}, we have   
$$
\essinf_{\mu\in\mathcal{M}}\esssup_{\theta\in\Theta}Y_\tau^{2,\theta,\mu(\theta)}\leq \delta^{\frac{5}{4}} \cdot C(1+|\xi|) \left( E_{\sF_\tau}\left[ \int_0^{T_0}\big| \zeta^{\phi}_t\big|^4 dt   \right]\right)^{1/4} ,\quad \text{a.s.}
$$
Since $\essinf_{\gamma\in\Gamma}Y_\tau^{2,\theta,\gamma}\leq Y_\tau^{2,\theta,\mu(\theta)}$, we obtain   
\begin{align*}
\esssup_{\theta\in\Theta}\essinf_{\gamma\in\Gamma}Y_\tau^{2,\theta,\gamma}
&
\leq \essinf_{\mu\in\mathcal{M}}\esssup_{\theta\in\Theta}Y_\tau^{2,\theta,\mu(\theta)}
\\
&\leq \delta^{\frac{5}{4}} \cdot C(1+|\xi|) \left( E_{\sF_\tau}\left[ \int_0^{T_0}\big| \zeta^{\phi}_t\big|^4 dt   \right]\right)^{1/4},\quad \text{a.s.,}
\end{align*}
and Lemma \ref{lemmaonY0} implies  that
\begin{equation}\label{EstimateY0}
Y^0_\tau
	\leq 
		\delta^{\frac{5}{4}} \cdot C(1+|\xi|) \left( E_{\sF_\tau}\left[ \int_0^{T_0}\big| \zeta^{\phi}_t\big|^4 dt   \right]\right)^{1/4},\quad \text{a.s.,}
\end{equation} 
where $(Y^0,Z^0)$ is the solution to BSDE \eqref{Y0}. Combining \eqref{EstimateY0} and the following relation $$Y^0_\tau=E_{\sF_\tau}\left[\int_\tau^{\tau+\delta \wedge T_0}F_0(s,\xi,Y_s^0,Z_s^0)\,ds\right],$$
we have  
\begin{align}
\text{ess}\!\liminf_{\delta\rightarrow 0^+}\frac{1}{\delta}
E_{\sF_\tau}
	\left[\int_\tau^{\tau+\delta \wedge T_0}
F_0(s,\xi,Y_s^0,Z_s^0)\,ds \right]
&\leq
	\lim_{\delta\rightarrow 0^+}\delta^{\frac{1}{4}} \cdot C(1+|\xi|) \left( E_{\sF_\tau}\left[ \int_0^{T_0}\big| \zeta^{\phi}_t\big|^4 dt   \right]\right)^{1/4}
		\nonumber\\
	&
		=0, \quad\text{a.s.} \label{lim-0}
\end{align}
Then, one can easily deduce that 
\begin{align}
\text{ess}\!\liminf_{s\rightarrow\tau^+} F_0(s,\xi,0,0)\leq 0,\quad \text{a.s.} 
\label{eqn-relation-supersolution}
\end{align}
If this is not true, there exists a $\Omega'_{\tau} \in \sF_{\tau}$, $\epsilon>0$, and $\delta_0\in(0,1)$, such that   $\mathbb P(\Omega'_{\tau})>0$ and for all $\delta\in (0,\delta_0]$,
$$\essinf_{s\in [\tau,\tau+\delta \wedge T_0 ]} F_0(s,\xi,0,0)
>\epsilon,\quad \text{a.e. on } \Omega'_{\tau}.$$
Then, by Lipschitz condition and Lemma \ref{estimateYZ}, we have a.e. on $\Omega'_{\tau}$,
\begin{equation*}\begin{split}
&\frac{1}{\delta}E_{\sF_\tau}\left[
\int_\tau^{\tau+\delta \wedge T_0 }F_0(s,\xi,Y^0_s,Z^0_s)\,ds \right]
\\
&> {\epsilon}\cdot E_{\sF_\tau} \left[ 
\frac{1}{\delta}\cdot\big|((\tau+\delta)\wedge T_0)-\tau\big| \right]
-\frac{C}{\delta}\cdot \left\{ E_{\sF_\tau}\left[\int_\tau^{\tau+\delta \wedge T_0 }|Y_s^0|ds\right]
+E_{\sF_\tau}\left[ \int_\tau^{\tau+\delta\wedge T_0}|Z_s^0|ds\right]\right\}
\\
&
	>{\epsilon}\cdot E_{\sF_\tau}\left[  
	\frac{1}{\delta}\cdot\big|((\tau+\delta)\wedge T_0)-\tau\big|\right]
	- \delta^{\frac{1}{4}} \cdot C \cdot\left( E_{\sF_\tau}\left[ \int_0^{T_0}\big| \zeta^{\phi}_t\big|^4 dt    \right]\right)^{1/4},
\end{split}\end{equation*}
which leads to a contradiction with \eqref{lim-0} as $\delta$ tends to zero. Therefore, we have relation \eqref{eqn-relation-supersolution} hold which together with the arbitrariness of $\phi$ implies that the function $V$ is a viscosity {\bf supersolution} of BSPDE \eqref{SHJBI-low}. 

\textbf{Step 2.} We prove that $V$ is a viscosity {\bf subsolution}: for each $\phi\in\underline{\mathcal{G}}V(\tau,\xi;\Omega_\tau,K)$ with $K\geq L$, $\tau\in\mathcal{T}^0$, $\Omega_\tau\in\sF_\tau$, $P(\Omega_\tau)>0$, and $\xi\in L^0(\Omega_\tau,\sF_\tau;\bR^d)$, 
there holds for almost all $\omega\in\Omega_\tau$, 
\begin{align}
\text{ess}\!\limsup_{s\rightarrow\tau^+} 
\{\mathfrak{d}_{s}\phi+\bH_-(D^2\phi,D\mathfrak{d}_{\omega}\phi, D\phi, \phi, \mathfrak{d}_{\omega}\phi )\}(s,\xi)\geq 0. \label{ineq-subvis}
\end{align}
 Let $\hat \tau $ be the stopping time corresponding to $\phi\in \underline{\mathcal{G}}V(\tau,\xi;\Omega_\tau,K)$.   
 
Let us suppose that the relation \eqref{ineq-subvis}  is not true. 
Then there exists some $k>0$, $\Omega'_{\tau} \in \sF_{\tau}$, and $\delta_0\in (0,1)$, such that $\Omega'_{\tau} \subset \Omega_{\tau}$, $\mathbb P(\Omega'_{\tau})>0$, and there holds a.e. on $\Omega_{\tau}'$,
\begin{equation}
\esssup_{s\in [\tau,\tau+\delta_0 \wedge T]}    F_0(s,\xi,0,0)  
= \esssup_{s\in [\tau,\tau+\delta_0\wedge T]} \left[
\esssup_{\theta\in\Theta}\essinf_{\gamma\in\Gamma}
F(s,\xi,0,0,\theta,\gamma)
\right]
\leq -k<0.
\end{equation}
Again,  we may choose $T_0\in (0,T)$ such that $\bP(\{\tau<T_0\}\cap \Omega'_{\tau})>0$.  W.l.o.g., we assume $\hat\tau =T_0$ and $\Omega_{\tau}=\Omega_{\tau}'=\Omega$. The measurable selection theorem (see Theorem \ref{thm-MS}) allows us to find a measurable function $\psi: \Theta\rightarrow\Gamma$ such that
\begin{align}
F(s,\xi,0,0,\theta,\psi(\theta))
\leq -\frac{k}{2},\quad \text{a.e. on }  [\tau,\tau+\delta_0\wedge T_0],\ \ \mbox{for\ all\ }\theta\in\Theta.\label{F-ineq}
\end{align}
For each $\delta\in (0,\delta_0)$, by Theorem \ref{thm-DPP}, we have
$$
\phi(\tau,\xi)=V(\tau,\xi)=\essinf_{\mu\in\mathcal{M}}\esssup_{\theta\in\Theta}G_{\tau,\tau+\delta \wedge T_0 }^{\tau,\xi;\theta,\mu(\theta)}[V(\tau+\delta \wedge T_0,X_{\tau+\delta \wedge T_0}^{\tau,\xi;\theta,\mu(\theta)})], \text{ a.s.}
$$
Meanwhile, by the relation  \eqref{dominate-G} and $\phi\in \underline{\mathcal{G}}V(\tau,\xi;\Omega_\tau,K)$, there holds 
\begin{align}
&
G_{\tau,\tau+\delta\wedge T_0 }^{\tau,\xi;\theta,\mu(\theta)}\left[
\phi(\tau+\delta\wedge T_0 ,X_{\tau+\delta\wedge T_0 }^{\tau,\xi;\theta,\mu(\theta)})
\right] 
-G_{\tau,\tau+\delta\wedge T_0 }^{\tau,\xi;\theta,\mu(\theta)}\left[V(\tau+\delta\wedge T_0 ,X_{\tau+\delta \wedge T_0 }^{\tau,\xi;\theta,\mu(\theta)})\right]
\nonumber \\
\geq&\,
\underline\cE^L_{\tau,\tau+\delta\wedge T_0 } \left[
\phi(\tau+\delta\wedge T_0 ,X_{\tau+\delta\wedge T_0 }^{\tau,\xi;\theta,\mu(\theta)})
-
V(\tau+\delta\wedge T_0 ,X_{\tau+\delta \wedge T_0 }^{\tau,\xi;\theta,\mu(\theta)})
\right]
\nonumber \\
\geq&\,
\underline\cE^L_{\tau,\tau+\delta\wedge T_0 } \left[\inf_{y\in\bR^d}
\left( \phi(\tau+\delta\wedge T_0 ,y)
-
V(\tau+\delta\wedge T_0 , y)
\right)
\right]
\nonumber \\
\geq&\,
\underline\cE^K_{\tau,\tau+\delta\wedge T_0 } \left[\inf_{y\in\bR^d}
\left( \phi(\tau+\delta\wedge T_0 ,y)
-
V(\tau+\delta\wedge T_0 , y)
\right)
\right]
\nonumber \\
\geq&\, 
\phi(\tau,\xi)
-V(\tau,\xi)
\nonumber \\
=&\,0,\quad \text{a.s.} \label{compr-V-2}
\end{align}
Therefore, we have
\begin{align}
\essinf_{\mu\in\mathcal{M}}\esssup_{\theta\in\Theta}\big\{G_{\tau,\tau+\delta \wedge T_0}^{\tau,\xi;\theta,\mu(\theta)}[\phi(\tau+\delta \wedge T_0,X_{\tau+\delta \wedge T_0}^{\tau,\xi;\theta,\mu(\theta)})]-\phi(\tau,\xi)\big\}\geq 0,\quad \text{a.s.} \label{compr-phi-2}
\end{align}
Thus, by Lemma \ref{relationshipYGphi}, it follows that
$$\essinf_{\mu\in\mathcal{M}}\esssup_{\theta\in\Theta}Y_\tau^{1,\theta,\mu(\theta)}\geq 0, \text{a.s.,}$$
and in particular, for each $\psi\in \mathcal M$ there holds
$$\esssup_{\theta\in\Theta}Y_\tau^{1,\theta,\psi(\theta)}\geq 0,\quad \text{a.s.}$$
Given an arbitrary $\epsilon>0$, we can choose $\theta^\epsilon\in\Theta$ such that $Y_\tau^{1,\theta^\epsilon,\psi(\theta^\epsilon)}\geq-\epsilon\delta.$
From Lemma \ref{lemdifferenceofY}, we further have
\begin{equation}\label{Y2geqCdelta}
Y_\tau^{2,\theta^\epsilon,\psi(\theta^\epsilon)}\geq-\epsilon\delta - \delta^{\frac{5}{4}} \cdot C(1+|\xi|) \left( E_{\sF_\tau}\left[ \int_0^{T_0}\big| \zeta^{\phi}_t\big|^4 dt   \right]\right)^{1/4}
,\quad \text{a.s.}
\end{equation}
Notice that 
$$Y_\tau^{2,\theta^\epsilon,\psi(\theta^\epsilon)}=E_{\sF_\tau}\left[\int_\tau^{\tau+\delta\wedge T_0}F(s,\xi,Y_s^{2,\theta^\epsilon,\psi(\theta^\epsilon)},Z_s^{2,\theta^\epsilon,\psi(\theta^\epsilon)},\theta^\epsilon,\psi(\theta^\epsilon))ds\right].$$
This together with the Lipschitz continuity of $F$, relation \eqref{F-ineq}, and Lemma \ref{estimateYZ} indicates that
\begin{align}
Y_\tau^{2,\theta^\epsilon,\psi(\theta^\epsilon)}\leq&\,E_{\sF_\tau}\left[\int_\tau^{\tau+\delta \wedge T_0 }\left(L|Y_s^{2,\theta^\epsilon,\psi(\theta^\epsilon)}|+L|Z_s^{2,\theta^\epsilon,\psi(\theta^\epsilon)}|+F(s,\xi,0,0,\theta^\epsilon,\psi(\theta^\epsilon)\right)ds\right]
\nonumber \\
\leq&\,-\frac{1}{2}k\delta
+ \delta^{\frac{5}{4}} \cdot C(1+|\xi|) \left( E_{\sF_\tau}\left[ \int_0^{T_0}\big| \zeta^{\phi}_t\big|^4 dt   \right]\right)^{1/4}
,\qquad \text{a.s.} \label{Y2leqCdelta}
\end{align}
Combining \eqref{Y2geqCdelta} and \eqref{Y2leqCdelta} and dividing both sides by $\delta$, we have
 \begin{align*}
 &-\delta^{\frac{1}{4}} \cdot C(1+|\xi|) \left( E_{\sF_\tau}\left[ \int_0^{T_0}\big| \zeta^{\phi}_t\big|^4 dt   \right]\right)^{1/4}  	-\epsilon\\
 & 
 \leq \delta^{\frac{1}{4}} \cdot C(1+|\xi|) \left( E_{\sF_\tau}\left[ \int_0^{T_0}\big| \zeta^{\phi}_t\big|^4 dt   \right]\right)^{1/4}
 	-\frac{1}{2}k.
 \end{align*}
 Letting $\delta\downarrow0$ and then $\epsilon\downarrow0$, we obtain $k\leq0$, which incurs a contradiction. 
 
 Hence, the value function $V$  is a viscosity solution of the stochastic HJBI equation \eqref{SHJBI-low}. Analogously, we prove that the value function $U$  is a viscosity solution of the stochastic HJBI equation  \eqref{SHJBI-up}.
 \end{proof}

\begin{rmk}
In the above proof we have actually shown that  the value function $V$ (resp., $U$) is indeed a viscosity $L$-solution of the stochastic HJBI equation \eqref{SHJBI-low} (resp., \eqref{SHJBI-up}). On the other hand,  in \eqref{compr-V-1} and \eqref{compr-V-2}, one may see that the sublinear functionals $\overline\cE^K$ and $\underline\cE^K$ are used to reach \eqref{compr-phi-1} and \eqref{compr-phi-2}; this explains why we employ the sublinear functionals $\overline\cE^K$ and $\underline\cE^K$ for test functions in $\underline{\mathcal{G}}u(\tau,\xi;\Omega_{\tau},K)$ and $\overline{\mathcal{G}}u(\tau,\xi;\Omega_{\tau},K)$ in the definition of viscosity solutions. 
\end{rmk}

\section{Uniqueness of the viscosity solution}

\subsection{A comparison theorem}
 Under Assumption $\textbf{(A1)}$, we have proven that the value function is a viscosity solution; however, due to the non-convexity of the games, the value function cannot be expected to be the minimal or maximal one among all the viscosity solution candidates  as in  \cite{qiu2017viscosity}. Instead, the proof herein is based on a comparison result, which is strategically different from  \cite{qiu2017viscosity}.

In Definition \ref{defn-viscosity}, we use the test functions from $\mathscr C_{\sF}^3$ that are logically finer than those in \cite{qiu2017viscosity}, to overcome the difficulties from the non-convexity  of the game and the nonlinear dependence of function $f$ on unknown variables. In principle, the stronger regularity of test functions increases the difficulties for the uniqueness. To reduce such an impact, we adopt some approximations and introduce the following function space.

\begin{defn}\label{defn-testfunc-2}
For $u\in \cS^{2} (C_b(\bR^d))$, 
we say $u\in \mathscr C_{\sF}^{2,\text{Lip}}$ if it holds that:\\
(i) there is $L^u>0$ such that a.s., $|u(t,x)-u(t,\bar x)|\leq L^u |x-\bar x|$ for all $(t,x,\bar x)\in [0,T]\times \bR^d\times \bR^d$;\\
(ii) there exist $\bar\alpha\in (0,1)$ and a finite partition $0=\underline t_0<\underline t_1<\ldots<\underline t_n=T$ for some integer $n\geq 1$ such that on each subinterval $[\underline t_j,\underline t_{j+1}]$ for $j=0,\dots,n-1$, we have 
$Du \in\cS^4_{\text{loc}}([\underline t_j,\underline t_{j+1});C_b^{1+\bar\alpha}(\bR^d))$, 
\begin{align*}
&\mathfrak{d}_tu \in 
\left\{h^1+h^2:\, h^1\in \cL^4_{\text{loc}}([\underline t_j,\underline t_{j+1}); \bR),\,h^2\in \cS^{\infty}_{\text{loc}}([\underline t_j,\underline t_{j+1});C_b^{\bar\alpha}(\bR^d)) \right\},
\\
&
\mathfrak{d}_{\omega}u \in 
	\left\{h^1+h^2:\, h^1\in \cL^{4,2}_{\text{loc}}([\underline t_j,\underline t_{j+1}); \bR),\,h^2\in \cS^{\infty}_{\text{loc}}([\underline t_j,\underline t_{j+1});C_b^{1+\bar\alpha}(\bR^d)) \right\},
\end{align*}
satisfying for each $x\in \bR^d$ and for all $0\leq r\leq T_0<\underline t_{j+1}$,
\begin{align*}
u(r,x)=u(T_0,x)-\int_r^{T_0} \mathfrak{d}_su(s,x)\,ds -\int_r^{T_0}\mathfrak{d}_{\omega}u(s,x)\,dW_s,\quad \text{a.s.}
\end{align*}
\end{defn}

\begin{thm}\label{weakcomparisonthm}
Let $\textbf{({A}1)}$ hold and $u$ be a viscosity subsolution (resp. supersolution) of BSPDE \eqref{SHJBI-low}  and $\phi\in\sC^{2,\text{Lip}}_{\sF}$ with 
$(u-\phi)^+\in\mathcal{S}^{\infty}(C_b(\mathbb{R}^d))$ (resp. $(\phi-u)^+\in\mathcal{S}^{\infty}(C_b(\mathbb{R}^d))$). Suppose   $\phi(T,x)\geq  (\text{resp.}\,\leq) \Phi(x)$, for all $x\in\bR^d$ a.s. and with probability 1,
\begin{align*}
&\text{ess}\liminf_{s\rightarrow t^+} \left\{-\mathfrak{d}_s\phi(s,y)
-\bH_{-}(D^2\phi,D\mathfrak{d}_\omega\phi,D\phi,\phi,\mathfrak{d}_\omega\phi)(s,y)\right\}\geq 0\\
(&\text{resp., }\text{ess}\limsup_{s\rightarrow t^+} \left\{-\mathfrak{d}_s\phi(s,y)-\bH_{-}(D^2\phi,D\mathfrak{d}_\omega\phi,D\phi,\phi,\mathfrak{d}_\omega\phi )(s,y)\right\}\leq 0),
\end{align*}
for all $(t,y)\in[0,T)\times\bR^d$. Then it holds that $u(t,x)\leq(resp.\ \geq)\phi(t,x)$, a.s. $\forall (t,x)\in[0,T]\times\bR^d$. 
\end{thm}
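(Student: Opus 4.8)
The plan is to treat the viscosity subsolution case (the supersolution case being symmetric) and to argue by contradiction, via the optimal–stopping construction from the proof of Theorem~\ref{thm-stability} combined with a strict–monotonicity trick that absorbs the Snell corrector. Say that \emph{comparison fails} if $\bP\big(\sup_{(t,x)\in[0,T]\times\bR^d}(u-\phi)(t,x)>0\big)>0$.

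\textbf{Reductions and a perturbed strict supersolution.} By the change of unknowns $u\mapsto e^{\lambda t}u$, $\phi\mapsto e^{\lambda t}\phi$ of Remark~\ref{rmk-defn-timechange}, with $\lambda<0$ chosen so that the transformed Hamiltonian satisfies $\bH_-(t,x,A,B,p,y,z)-\bH_-(t,x,A,B,p,y',z)\le-\lambda_0(y-y')$ for $y\ge y'$ with $\lambda_0\ge K_0$ — a transformation preserving the ordering $u\le\phi$, the terminal inequality, the pointwise supersolution inequality and the viscosity $K_0$–subsolution property — I may assume $\bH_-$ is strictly decreasing in $y$ at rate $\lambda_0\ge K_0$, and set $K:=K_0$. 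Since $\phi\in\sC^{2,\text{Lip}}_{\sF}$ has only $C^{1+\bar\alpha}$ spatial regularity, I mollify it in $x$, $\phi_\kappa(t,x):=\int_{\bR^d}\phi(t,x-z)\rho_\kappa(z)\,dz$; because $\mathfrak{d}_t,\mathfrak{d}_\omega$ are the time–decomposition operators they commute with spatial convolution, so, working subinterval by subinterval of the partition in Definition~\ref{defn-testfunc-2}, one verifies $\phi_\kappa\in\mathscr C^3_{\sF}$, $\rho'_\kappa:=\|\phi_\kappa-\phi\|_{\cS^\infty(C_b(\bR^d))}\le CL^\phi\kappa\to0$, and, from the Lipschitz continuity of $b,\sigma,f$ in \textbf{(A1)} and the bounds on $Dg,D^2g$ in \eqref{h-linear-growth}, $\text{ess}\liminf_{s\to t^+}\{-\mathfrak{d}_s\phi_\kappa-\bH_-(D^2\phi_\kappa,D\mathfrak{d}_\omega\phi_\kappa,D\phi_\kappa,\phi_\kappa,\mathfrak{d}_\omega\phi_\kappa)\}(s,y)\ge-\eta(\kappa)$ for all $(t,y)$, with $\eta(\kappa)\to0$. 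Fixing $\beta\in(0,1)$, then $\kappa$ small and $\eps$ smaller, set $\psi(t,x):=\phi_\kappa(t,x)+\beta(T-t)+\eps\,g(x)+\rho'_\kappa$. Using $g\ge0$ and $u(T,\cdot)\le\Phi\le\phi(T,\cdot)$ one gets $(u-\psi)(T,\cdot)\le-\eps g(\cdot)\le0$ a.s.; the linear growth of $g$ in \eqref{h-linear-growth} makes $x\mapsto(u-\psi)(t,x)$ coercive, so $\sup_x(u-\psi)(t,x)$ is attained at an $\sF_t$–measurable $\xi_t$ with $|\xi_t|\le C(1+\eps^{-1})$ (Theorem~\ref{thm-MS}); and since $\bH_-$ is decreasing in $y$ and the $\eps g$–corrections to the $D^2\phi,D\phi$ slots are $O(\eps)$, we get $-\mathfrak{d}_s\psi-\bH_-(D^2\psi,D\mathfrak{d}_\omega\psi,D\psi,\psi,\mathfrak{d}_\omega\psi)(s,x)\ge\beta-\eta(\kappa)-O(\eps)\ge\beta/2$ for a.e.\ $(s,x)\in Q$, a.s.\ (the pointwise $\text{ess}\liminf$ lower bound upgrading to an a.e.\ bound by a right–Lebesgue–point argument with Fubini).

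\textbf{Optimal stopping, the test function, and the contradiction.} Assume comparison fails; then for $\beta$ (hence $\kappa,\eps$) small one still has $\bP\big(\sup_{(t,x)\in[0,T_0]\times\bR^d}(u-\psi)(t,x)>0\big)>0$ for some $T_0<T$. Let $Y_t:=\inf_x(\psi-u)(t\wedge T_0,x)$, a bounded continuous adapted process with $\inf_{[0,T_0]}Y<0$ on a set of positive probability, let $Z$ be its lower Snell envelope under $\underline\cE^K$ and $\tau_*:=\inf\{t:Y_t=Z_t\}$. A standard device (raising $Y$ near $T_0$ so the optimal stopping strictly avoids $T_0$) together with the optimal stopping theory of \cite{El_Karoui-reflec-1997}, carried out as in the proof of Theorem~\ref{thm-stability}, yields $\tau_*<T$ and an event $\Omega_{\tau_*}\in\sF_{\tau_*}$ with $\bP(\Omega_{\tau_*})>0$ on which $Y_{\tau_*}=Z_{\tau_*}<0$. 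Define
\[
\widehat\psi(t,x):=\psi(t,x)+E_{\sF_t}\!\left[\overline\cE^K_{\tau_*,\,t\vee\tau_*}\big[-Y_{\tau_*}\big]\right]=:\psi(t,x)+c_t .
\]
The corrector $c$ is $x$–independent, lies in $\mathscr C^3_{\sF}$ by Lemma~\ref{lem-nonlinear}(ii) and standard BSDE estimates, and for $s>\tau_*$ equals $c_s=(-Y_{\tau_*})e^{K(s-\tau_*)}>0$ on $\Omega_{\tau_*}$, with $\mathfrak{d}_s c_s=Kc_s$ and $\mathfrak{d}_\omega c_s=0$ there. By the Snell–envelope identities, $(\widehat\psi-u)(\tau_*,\xi_{\tau_*})1_{\Omega_{\tau_*}}=0=\essinf_{\bar\tau\in\mathcal T^{\tau_*}}\underline\cE^K_{\tau_*,\bar\tau\wedge T_0}\big[\inf_y(\widehat\psi-u)(\bar\tau\wedge T_0,y)\big]1_{\Omega_{\tau_*}}$ a.s., so after the truncation of Lemma~\ref{lem-linear-growth} applied to the $x$–dependent part $\phi_\kappa+\eps g$ (whose linear growth comes from $g$), a truncated version of $\widehat\psi$ lies in $\underline{\cG}u(\tau_*,\xi_{\tau_*};\Omega'_{\tau_*},K)$ for some $\Omega'_{\tau_*}\subset\Omega_{\tau_*}$, $\bP(\Omega'_{\tau_*})>\tfrac12\bP(\Omega_{\tau_*})>0$. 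Since $\widehat\psi-\psi=c_s$ is $x$–independent and positive on $\Omega'_{\tau_*}$ for $s>\tau_*$, and $\bH_-$ is strictly decreasing in $y$ at rate $\lambda_0\ge K$, one has for a.e.\ such $(s,x)$ on $\Omega'_{\tau_*}$
\[
-\mathfrak{d}_s\widehat\psi-\bH_-\big(D^2\widehat\psi,D\mathfrak{d}_\omega\widehat\psi,D\widehat\psi,\widehat\psi,\mathfrak{d}_\omega\widehat\psi\big)(s,x)\ \ge\ \big(-\mathfrak{d}_s\psi-\bH_-(\cdots\psi\cdots)(s,x)\big)+(\lambda_0-K)c_s\ \ge\ \tfrac{\beta}{2}\,,
\]
using $-\mathfrak{d}_s c_s=-Kc_s$ and the monotonicity gain $+\lambda_0 c_s$. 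Hence $\text{ess}\liminf_{s\to\tau_*^+}\{-\mathfrak{d}_s\widehat\psi-\bH_-(\cdots\widehat\psi\cdots)\}(s,\xi_{\tau_*})\ge\beta/2>0$ a.e.\ on $\Omega'_{\tau_*}$, contradicting the viscosity $K_0$–subsolution property of $u$ (Definition~\ref{defn-viscosity}, $K\ge K_0$) since $\widehat\psi\in\underline{\cG}u(\tau_*,\xi_{\tau_*};\Omega'_{\tau_*},K)$. Therefore $u\le\psi$ a.s.; letting $\eps\to0^+$, then $\kappa\to0^+$, then $\beta\to0^+$ gives $u(t,x)\le\phi(t,x)$ for all $(t,x)$, a.s. The supersolution case is symmetric, working with $\overline{\cG}u$, the upper Snell envelope under $\overline\cE^K$, the perturbation $\phi\mapsto\phi-\beta(T-t)-\eps g-\rho'_\kappa$, and Remark~\ref{rmk-linear-growth}.

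\textbf{Main obstacle.} Two steps carry the difficulty. The first is the regularity bridging: turning $\phi\in\sC^{2,\text{Lip}}_{\sF}$ into a bona fide $\mathscr C^3_{\sF}$ test function by mollification while keeping the approximate supersolution inequality and the integrability demanded by Definition~\ref{defn-testfunc}. The second, and the genuinely new point compared with Theorem~\ref{thm-stability}, is the optimal–stopping step: one must produce $\tau_*<T$ at which the $\underline\cE^K$–touching occurs on a positive–probability event \emph{and} with $Y_{\tau_*}<0$, and then exploit the strictly decreasing $y$–dependence (secured by choosing $\lambda_0\ge K_0=K$) so that the Snell corrector $c_s$ — which here is \emph{not} small — does not destroy the strict lower bound $\beta/2$.
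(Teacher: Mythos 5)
Your architecture is essentially the paper's: mollify $\phi$, add the coercive penalty $\eps g$, run an optimal-stopping argument under the nonlinear expectation to locate a touching time $\tau_*<T$ on a positive-probability event, attach the $x$-independent corrector $E_{\sF_t}[\overline\cE^K_{\tau_*,t\vee\tau_*}[-Y_{\tau_*}]]$ to manufacture a test function in $\underline{\cG}u(\tau_*,\xi_{\tau_*};\Omega'_{\tau_*},K)$, and contradict the subsolution property. Your one genuinely different ingredient is how you absorb the corrector's drift $-Kc_s$: you rescale by $e^{\lambda t}$ aggressively enough that $\bH_-$ becomes strictly decreasing in $y$ at rate $\lambda_0\geq K$, so the monotonicity gain $+\lambda_0 c_s$ cancels it. The paper instead subdivides time so that $\Delta_n\leq C(K)$ and inserts the linear drift $\frac{\alpha e^{-K(T-t)}(s-t)}{(2+\kappa)(T-t)}$, whose time derivative dominates $K Y_\tau$; your variant is legitimate and arguably cleaner, since it removes the smallness constraint \eqref{Delta-n} on the subintervals.

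There is, however, a genuine gap in your regularity-bridging step. You fix $\kappa$ small so that $\psi=\phi_\kappa+\beta(T-t)+\eps g+\rho'_\kappa$ is a strict supersolution with \emph{uniform} margin $\beta/2$ for a.e.\ $(s,x)$, a.s. But the mollification error in the Hamiltonian is bounded by $\kappa^{\bar\alpha}$ times the $C^{\bar\alpha}$/$C^{1+\bar\alpha}$ seminorms of $D^2\phi(s,\cdot)$, $D\mathfrak{d}_\omega\phi(s,\cdot)$, etc., and by Definition \ref{defn-testfunc-2} these are only in $\cS^4_{\text{loc}}$ or $\cS^{\infty}_{\text{loc}}$ on each subinterval: they are random and need not be bounded uniformly in $(s,\omega)$. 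So no deterministic (or even $\sF_0$-measurable) choice of $\kappa$ yields $\eta(\kappa)\leq\beta/4$ everywhere, and making $\kappa$ depend on $\sup_s C(s,\omega)$ would destroy adaptedness of $\phi_\kappa$. The paper avoids exactly this by never claiming a uniform margin: it keeps the mollification parameter $\delta$ alive through the whole optimal-stopping construction, proves the lower bound \eqref{est-tau-T} on $\bP(\Omega_\tau)$ \emph{uniformly in} $\delta$, and only sends $\delta\to0^+$ inside the final $\text{ess}\liminf$, where $\text{ess}\limsup_{s\to\tau^+}|\eta_s(\delta)-\eta_s|\to0$ a.s.\ suffices. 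Your scheme can be repaired along those lines, but as written the step "$\geq\beta-\eta(\kappa)-O(\eps)\geq\beta/2$" is not justified. A secondary, more organizational point: since $\phi_\kappa$ lies in $\mathscr C^3_{\sF}$ only on each subinterval $[\underline t_j,\underline t_{j+1})$ of the partition in Definition \ref{defn-testfunc-2}, the entire stopping argument (not merely the mollification) must be run recursively backward over $[\underline t_{n-1},T],[\underline t_{n-2},\underline t_{n-1}],\dots$, with the terminal inequality at $\underline t_{j+1}$ supplied by the previous step; your single pass on $[0,T_0]$ using the terminal data at $T$ does not cover the case $n\geq2$.
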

\begin{rmk}
Due to the  nonanticipativity constraints on the unknown variables, the classical variable-doubling techniques for deterministic nonlinear partial differential equations are inapplicable to the concerned BSPDEs \eqref{SHJBI-low} and \eqref{SHJBI-up}; in fact, when constructing auxiliary functions, we need not just take care of the regularity but also take into account the adaptedness/nonanticipativity.  For the regularity, notice that the function $\phi\in\sC^{2,\text{Lip}}_{\sF}$ has less regularity than the ones in $\sC^{3}_{\sF}$, so in order to construct a test function for viscosity semisolutions, the function $\phi$ will get mollified via identity  approximations.   
\end{rmk}
\begin{proof}[Proof of Theorem \ref{weakcomparisonthm}]
The proof by contradiction will be only given for the case of viscosity  $K$-subsolution for some $K\geq 0$, since for the case of viscosity supersolution, the proof will follow similarly.
Furthermore, letting $0=\underline t_0<\underline t_1<\ldots<\underline t_n=T$ be the partition associated to $\phi\in\sC^{2,\text{Lip}}_{\sF}$  as in Definition \ref{defn-testfunc-2}, we shall first prove the assertion on time subinterval $[\underline t_{n-1},T]$ and then recursively on the intervals $[\underline t_{n-2},\underline t_{n-1}],\cdots,[0,\underline t_1]$  we may complete the proof with similar arguments. In this way,  w.l.o.g. we may assume that $\Delta_n=T-\underline t_{n-1}$ is smaller than some desired strictly positive number $C(K)$   (depending only on $K$, see \eqref{Delta-n} below); otherwise, we may further divide $[\underline t_{n-1},T]$ into smaller subintervals.

The following proof is given on the subinterval $[\underline t_{n-1},T]$ and it is divided into two steps. 

\textbf{Step 1.} We figure out a test function in some set $\underline{\cG} u(\tau,\xi_{\tau};\Omega_{\tau},K)$ by assuming the opposite. 

Put
$$
\Lambda
	=\|  (u-\phi)^+ \|_{\mathcal S ^{\infty}([\underline t_{n-1},T];C_b(\mathbb{R}^d))} .$$
To the contrary, suppose $\Lambda >0$, i.e., with a positive probability, $u(t,\bar x)> \phi(t,\bar x)+\frac{2\Lambda}{3}$  at some point $(t,\bar x)\in [\underline t_{n-1},T)\times \bR^d$. Define $\phi_{\delta}$ as in \eqref{I-approx}:
\begin{align*}
\phi_{\delta}(s,x)
&= \int_{\bR^d}  \phi(s,z) \rho\left(\frac{x-z}{\delta}\right)\cdot \frac{1}{|\delta|^d}\,dz,\quad (s,x, \delta)\in [ \underline t_{n-1},T]\times \bR^d\times (0,\infty).
\end{align*}
Notice that a.s. $|\phi(s,x)-\phi_{\delta}(s, x)|\leq L^{\phi} \delta$ for all $(s,x, \delta)\in [\underline t_{n-1},T]\times \bR^d\times (0,\infty)$. 
 Accordingly, set
 $$\phi^{(\delta)}(s,x):=\phi_{\delta}(s,x) + \delta L^{\phi} ,\quad (s,x)\in[\underline t_{n-1},T]\times\bR^d.$$
 Then there exists $(\delta_0,\eps_0)\in (0,1)\times (0,\infty)$, such that $\mathbb P(u(t,\bar x)>\phi^{(\delta)}(t,\bar x))>\eps_0$ for all $\delta \in (0,\delta_0)$.    Moreover, there exists $\overline \Omega_t\in\sF_t$ such that $\bP( \overline\Omega_t)>0$ with $\overline\Omega_t \subset \{u(t,\bar x)-\phi^{(\delta)}(t,\bar x) >\frac{\Lambda}{2}  \}$ for all $\delta \in (0,\delta_0)$. Let $g$ be the differentiable nonnegative convex function defined in \eqref{g-defined}-\eqref{h-linear-growth}. Then  for each $\eps \in (0,1)$, there exists $\xi_t\in L^0(\overline\Omega_t,\sF_t;\bR^d)$ such that
$$
\alpha:=u(t,\xi_t)-\phi^{(\delta)}(t,\xi_t)-\eps g(\xi_t-\bar x )=\max_{x\in\bR^d} \{  u(t,x)-\phi^{(\delta)}(t,x) -\eps g(x-\bar x )\}\geq \frac{\Lambda}{2}\text{ a.e. on } \overline\Omega_t,
$$ 
where the existence and the measurability of $\xi_t$ follow from the measurable selection (see Theorem \ref{thm-MS}), the linear growth of function $g$ (see \eqref{h-linear-growth}) and the fact  that $(u-\phi^{(\delta)})^+\in\mathcal{S}^{\infty}([\underline t_{n-1},T];C_b(\mathbb{R}^d))$ (because $(u-\phi)^+\in\mathcal{S}^{\infty}(C_b(\mathbb{R}^d))$). 

Note that $t$ and $\overline\Omega_t$ may be chosen to be independent of $(\eps,\delta)$, and that a.s. $\phi^{(\delta)} (r,x) \geq \phi(r,x)$ for all $(r,x)\in [\underline t_{n-1},T]\times \bR^d$ with $\phi^{(\delta)}\in \mathscr C_{\sF}^3([\underline t_{n-1},T])$. W.l.o.g, we take $\overline\Omega_t=\Omega$ in what follows. It is obvious that $\alpha \leq \|  (u-\phi^{(\delta)})^+ \|_{\mathcal S ^{\infty}([\underline t_{n-1},T];C_b(\mathbb{R}^d))} \leq \Lambda $.

For each $s\in(t,T]$, choose an $\sF_s$-measurable variable $\xi_s$ such that  
\begin{align}
\left( u(s,\xi_s)-\phi^{(\delta)}(s,\xi_s) -\eps g( \xi_s-\bar x )\right)^+
&=\max_{x\in\bR^d}   \left(u(s,x)-\phi^{(\delta)}(s,x)-\eps g(x-\bar x )\right)^+ \notag\\
&=\max_{x\in\bR^d}   \left((u(s,x)-\phi^{(\delta)}(s,x))^+-\eps g(x-\bar x )\right)^+  . \label{eq-maxima}
\end{align}  
Set
\begin{align*}
Y_s
&=
	(u(s,\xi_s) -\phi^{(\delta)}(s,\xi_s)-\eps g(\xi_s-\bar x ))^++\frac{\alpha e^{-K(T-t)}(s-t)}{(2+\kappa)(T-t)};\\
Z_s
&= 
	\esssup_{\tau\in\cT^s} \overline\cE^{K}_{s,\tau} [Y_{\tau}],
\end{align*}
where $\kappa\geq 0$ is to be determined later. Then for $s\geq t$,
$$ Y_s\leq 2\Lambda \quad \text{and}\quad  Z_s \leq \Lambda  \left(1+e^{\Delta_n}\right), \quad \text{a.s.}$$
 As $(u-\phi^{(\delta)})^+\in\mathcal{S}^{\infty}([\underline t_{n-1},T];C_b(\mathbb{R}^d))$, it follows obviously the time-continuity of  
$$
	\max_{x\in\bR^d}   \left((u(s,x)-\phi^{(\delta)}(s,x))^+-\eps g(x-\bar x )\right)^+
$$ 
and thus that of
$\left( u(s,\xi_s)-\phi^{(\delta)}(s,\xi_s) -\eps g(\xi_s-\bar x )\right)^+$. Therefore, the process $(Y_s)_{t\leq s \leq T}$ has continuous trajectories. 
Define $\tau=\inf\{s\geq t:\, Y_s=Z_s\}$. Obviously, we have $\bP(\tau\leq T)=1$; further, in view of the optimal stopping theory, observing that
$$
\overline \cE^K_{t,T}Y_{T}=\frac{\alpha }{2+\kappa}
	<\alpha=Y_t\leq Z_t=\overline\cE^K_{t,\tau}Y_{\tau} =\overline\cE^K_{t,\tau}Z_{\tau},
$$
we have $\bP(\tau<T)>0$.
Indeed, this together with the relation \eqref{est-sup-dominate} yields that
\begin{align*}
\frac{\alpha^2e^{-2K(T-t)}}{(2+\kappa)^2} + \left(2\Lambda  \right)^2  \bP(\tau<T) 
\geq E\left[ E_{\sF_t} [|Y_{\tau}|^2]\right]
\geq \frac{1}{e^{K(K+2)\Delta_n}}   E \left [ \left|\overline \cE^K_{t,\tau} \left[ Y_{\tau}\right] \right|^2 \right]
\geq \frac{\alpha^2}{e^{K(K+2)\Delta_n}}  .
\end{align*}
Setting
$$\kappa=2 e^{\frac{K(K+2) \Delta_n}{2}},$$
and noticing that $\frac{\Lambda}{2} \leq \alpha\leq \Lambda  $, 
we obtain an estimate independent of $(\delta,\eps)$:
 \begin{align}
 \bP(\tau<T) \geq   
 \frac{3}{64\,e^{K(K+2) \Delta_n}   } 
 >0.
 \label{est-tau-T}
 \end{align}

Notice that 
\begin{align}
(u(\tau,\xi_{\tau}) -\phi^{(\delta)}(\tau,\xi_{\tau})-\eps g(\xi_{\tau}-\bar x))^++\frac{\alpha e^{-K(T-t)} (\tau-t)}{(2+\kappa)(T-t)}
=Z_{\tau}\geq \overline \cE^K_{{\tau},T}[Y_{T}]= \frac{\alpha e^{-K(\tau-t)}}{2+\kappa}. \label{relation-tau}
\end{align}
 Define 
$$\hat\tau=\inf\{s\geq\tau:\, (u(s,\xi_{s}) -\phi^{(\delta)}(s,\xi_{s})-\eps g(\xi_{s}-\bar x))^+\leq 0\}.$$ Obviously, $\tau\leq\hat\tau\leq T$.
 Put $\Omega_{\tau}=\{\tau<\hat\tau\}$. Then $\Omega_{\tau}\in \sF_{\tau}$. In view of relation \eqref{relation-tau} and the definition of $\hat\tau$, we have further $\Omega_{\tau}=\{\tau<\hat\tau\}=\{\tau<T\}$ with $\bP(\Omega_{\tau}) >0$ as in \eqref{est-tau-T}.
 

Set 
$$\gamma^{(\delta)}(s,x)=\phi^{(\delta)}(s,x)+\eps g(x-\bar x) 
-\frac{\alpha e^{-K(T-t)}(s-t)}{(2+\kappa)(T-t)}+E_{\sF_s}\left[Y_{\tau} e^{K(s-\tau)} \right]
.$$ 
 For each $\bar\tau\in\cT^{\tau}$, \footnote{Recall that $\cT^{\tau}$ denotes the set of stopping times $\zeta$ satifying $\tau\leq \zeta\leq T$ as defined in Section 4.} we have for almost all $\omega\in\Omega_{\tau}$, 
\begin{align*}
\left( \gamma^{(\delta)}-u\right)(\tau,\xi_{\tau})
=0=Y_{\tau}-Z_{\tau}
\leq 
Y_{\tau}- \overline\cE^K_{{\tau}, \bar\tau\wedge \hat{\tau}}\left[Y_{\bar\tau\wedge \hat{\tau}} \right] 
&=
 \underline\cE^K_{{\tau}, \bar\tau\wedge \hat{\tau}}\left[ Y_{\tau} e^{K( \bar\tau\wedge \hat{\tau}-\tau)}\right]+ \underline\cE^K_{{\tau}, \bar\tau\wedge \hat{\tau}}\left[-Y_{\bar\tau\wedge \hat{\tau}} \right] 
\\
&
\leq
 \underline\cE^K_{{\tau}, \bar\tau\wedge \hat{\tau}}\left[ Y_{\tau} e^{K( \bar\tau\wedge \hat{\tau}-\tau)} -Y_{\bar\tau\wedge \hat{\tau}} \right] 
\\
&\leq \underline\cE^K_{{\tau}, \bar\tau\wedge \hat{\tau} } \left[ \inf_{y\in\bR^d} (\gamma^{(\delta)}-u)(\bar\tau\wedge\hat\tau,y)   \right].
\end{align*}
Together with the arbitrariness of $\bar\tau$ and the construction of $\gamma^{(\delta)}$, this implies by Lemma \ref{lem-linear-growth} that $\gamma^{(\delta)}$ admits a truncated version (denoted by itself) lying in $  \underline{\cG} u(\tau,\xi_{\tau};\Omega'_{\tau},K)$ for some $\Omega'_{\tau}\subset \Omega_{\tau}$ with 
\begin{align}
\bP(\Omega'_{\tau})>\frac{\bP(\Omega_{\tau})}{2}>0. 
\label{Omega-prime}
\end{align}

\textbf{Step 2.} We prove that the  assertion holds on time interval $[\underline t_{n-1},T]$. There is a constant $C=C(K)>0$ (only depending $K$) such that  whenever $\Delta_n=T-\underline t_{n-1} \leq C(K)$ it holds that 
\begin{align}
\frac{ e^{-K\Delta_n}}{2(2+\kappa)\Delta_n}
=\frac{ e^{-K\Delta_n}}{2(2+2 e^{\frac{K(K+2) \Delta_n}{2}})\Delta_n}
 \geq 4K. \label{Delta-n}
\end{align}
We take $\Delta_n=C(K)$ in what follows.

As discussed in Remark \ref{rmk-defn-timechange}, w.l.o.g., we may assume that $\mathbb{H}_-(t,x,A,B,p,y,z)$ is decreasing in $y$. Also, we notice that $2\Lambda \geq Y_{\tau}\geq \frac{\alpha e^{-K(T-t)(\tau-t)}}{(2+\kappa)(T-t)}$ a.e. on $\Omega'_{\tau}$.
 As $u$ is a viscosity $K_0$-subsolution, it holds that for almost all $\omega\in\Omega'_{\tau}$, 
\begin{align}
0
&\geq
	 \text{ess}\!\liminf_{s\rightarrow \tau^+}
	  \left\{ -\mathfrak{d}_{s}\gamma^{(\delta)}(s,\xi_{\tau})
	-\bH_{-}(D^2\gamma^{(\delta)},D\mathfrak{d}_\omega\gamma^{(\delta)},D\gamma^{(\delta)},\gamma^{(\delta)},\mathfrak{d}_\omega\gamma^{(\delta)})(s,\xi_{\tau})\right\}
	\notag \\
&=
	\frac{\alpha e^{-K(T-t)}}{(2+\kappa)(T-t)} -KY_{\tau}
\notag \\
&\quad
	+ \text{ess}\!\liminf_{s\rightarrow \tau^+}
	 \Big\{  
	-\mathfrak{d}_{s} \phi^{(\delta)}(s,\xi_{\tau})
	-\bH_{-}(D^2\phi^{(\delta)}+\eps D^2g,D\mathfrak{d}_\omega\phi^{(\delta)},D\phi ^{(\delta)}+\eps Dg,
\nonumber\\
&
	\quad\quad\quad\quad\quad\phi^{(\delta)}+\eps g  + Y_{\tau}e^{K(s-\tau)}-\frac{\alpha e^{-K(T-t)} (s-t)}{(2+\kappa)(T-t)},\mathfrak{d}_\omega\phi^{(\delta)}\Big)(s,\xi_{\tau})\bigg\}
\notag \\
&\geq
	\frac{\Lambda e^{-K\Delta_n}}{2(2+\kappa)\Delta_n} -2K \Lambda
\notag \\
&\quad
	+ \text{ess}\!\liminf_{s\rightarrow \tau^+}
	  \Big\{  
	-\mathfrak{d}_{s} \phi^{(\delta)}(s,\xi_{\tau})
	-\bH_{-}(D^2\phi^{(\delta)}+\eps D^2g,D\mathfrak{d}_\omega\phi^{(\delta)},D\phi^{(\delta)} +\eps Dg,\nonumber\\
	&\quad\quad\quad\quad\quad
		\phi^{(\delta)},\mathfrak{d}_\omega\phi^{(\delta)})(s,\xi_{\tau})\Big\}
\label{eq-appl-monotone}\\
&\geq
	 \text{ess}\!\liminf_{s\rightarrow \tau^+}
	  \left\{  
	-\mathfrak{d}_{s}\phi^{(\delta)}(s,\xi_{\tau})
	-\bH_{-}(D^2\phi^{(\delta)},D\mathfrak{d}_\omega\phi^{(\delta)},D\phi^{(\delta)},\phi^{(\delta)},\mathfrak{d}_\omega\phi^{(\delta)})(s,\xi_{\tau})\right\}
	\notag\\
	&
	\quad\quad\quad + \frac{\Lambda e^{-K\Delta_n}}{4(2+\kappa)\Delta_n} - \eps \, C(L)\alpha_0
\label{eq-app-lip} 
\\
&
\geq 
		 \text{ess}\!\liminf_{s\rightarrow \tau^+}
	 \left\{  
	-\mathfrak{d}_{s}\phi^{(\delta)}(s,\xi_{\tau})
	-\bH_{-}(D^2\phi^{(\delta)},D\mathfrak{d}_\omega\phi^{(\delta)},D\phi^{(\delta)},\phi^{(\delta)},\mathfrak{d}_\omega\phi^{(\delta)})(s,\xi_{\tau})\right\}
	\notag\\
	&
	\quad\quad\quad + \frac{\kappa e^{-K\Delta_n}}{8(2+\kappa)\Delta_n}, \label{inequ-phi-delta}
\end{align}
where we set $\eps=\frac{1}{2}\wedge \frac{\Lambda e^{-K\Delta_n}}{8(2+\kappa)\Delta_n C(L)\alpha_0}$ and we note that the relation \eqref{eq-app-lip}  is based on the Lipchitz-continuity of $\mathbb{H}_-(t,x,A,B,p,y,z)$ with respect to $A$ and $p$ while in \eqref{eq-appl-monotone} we use the monotonicity of $\mathbb{H}_-(t,x,A,B,p,y,z)$ in $y$ instead of the Lipchitz-continuity due to the unboundedness of function $g$. Here, the constant $\alpha_0$ is from \eqref{h-linear-growth}. 

Set
\begin{align*}
\eta_s(\delta)
&= 
	-\mathfrak{d}_{s}\phi^{(\delta)}(s,\xi_{\tau})
	-\bH_{-}(D^2\phi^{(\delta)},D\mathfrak{d}_\omega\phi^{(\delta)},D\phi^{(\delta)},\phi^{(\delta)},\mathfrak{d}_\omega\phi^{(\delta)})(s,\xi_{\tau}) ,
	\\
\eta_s
&=	   
	-\mathfrak{d}_{s}\phi (s,\xi_{\tau})
	-\bH_{-}(D^2\phi ,D\mathfrak{d}_\omega\phi ,D\phi ,\phi ,\mathfrak{d}_\omega\phi )(s,\xi_{\tau}) .
\end{align*}
%
In view of the identity approximations, we straightforwardly check that
$$
(D\phi_{\delta},D^2\phi_{\delta},\mathfrak{d}_{s}\phi_{\delta},\mathfrak{d}_\omega\phi_{\delta}, D\mathfrak{d}_\omega\phi_{\delta})
=(D\phi ,D^2\phi ,\mathfrak{d}_{s}\phi ,\mathfrak{d}_\omega\phi , D\mathfrak{d}_\omega\phi)_{\delta},
$$ where we use the notation in \eqref{I-approx}.
As $\phi\in\sC^{2,\text{Lip}}_{\sF}$, applying the Lipschitz-continuity of $\bH_-(t,x,A,B,p,y,z)$ in $(A,B,p,y,z)$ gives that a.e. on $\Omega'_{\tau}$ for each $\tau<s<T$, 
\begin{align*}
|\eta_s(\delta)-\eta_s|&\leq (L+1) \cdot 
\Big( |D^2\phi -(D^2\phi)_{\delta} | + |D\phi -(D\phi_{\delta}) |
+ |\phi -\phi_{\delta} |\\
&\quad\quad\quad\quad\quad\quad\quad
+ |D\mathfrak{d}_\omega\phi- (D\mathfrak{d}_\omega\phi)_{\delta}|
+ |\mathfrak{d}_\omega\phi- (\mathfrak{d}_\omega\phi)_{\delta}|
+|\mathfrak{d}_{s}\phi-( \mathfrak{d}_{s}\phi)_{\delta} |
\Big)(s,\xi_{\tau})
\notag\\
&
\leq
C(s,\phi,L) |\delta|^{\bar\alpha}
, 
\notag
\end{align*}
where $\bar\alpha$ is the H\"older exponent in (ii) of Definition \ref{defn-testfunc-2} for $\phi\in\sC^{2,\text{Lip}}_{\sF}$ and the constant $C(s,\phi,L)$ may be chosen to be increasing in $s$ and depends on $s,L$, and quantities related to $\phi$, being independent of $\delta$. Therefore, it holds that 
\begin{align}
 \lim_{\delta\rightarrow 0^+} \text{ess}\!\limsup_{s\rightarrow \tau^+} |\eta_s(\delta)-\eta_s| 1_{\Omega'_{\tau}}=0,\text{ a.s.}\label{limlimsup}
\end{align}
However, recalling $\Omega'_{\tau}\subset \Omega_{\tau}=\{\tau<T\}$, \eqref{inequ-phi-delta}, and \eqref{limlimsup}, we have
\begin{align*}
&-  \frac{\Lambda e^{-K\Delta_n}}{8(2+\kappa)\Delta_n}
\cdot \text{ess}\!\limsup_{\delta\rightarrow 0^+} 
1_{\Omega'_{\tau}} 
\\
\geq&\,
  \text{ess}\!\liminf_{\delta\rightarrow 0^+}\text{ess}\!\liminf_{s\rightarrow \tau^+}\eta_s(\delta) 1_{\Omega'_{\tau}} 
\\
\geq&\,
\text{ess}\!\liminf_{\delta\rightarrow 0^+} 
\text{ess}\!\liminf_{s\rightarrow \tau^+}\eta_s 1_{\Omega'_{\tau}} 
- \lim_{\delta\rightarrow 0^+} 
\text{ess}\!\limsup_{s\rightarrow \tau^+} |\eta_s(\delta)-\eta_s| 1_{\Omega'_{\tau}}  
\\
\geq&\, 0,
\end{align*}
which implies that $\text{ess}\!\limsup_{\delta\rightarrow 0^+}  1_{\Omega'_{\tau}} \leq 0$. 
This incurs a contradiction because by  \eqref{est-tau-T} and \eqref{Omega-prime} it holds that
\begin{align*}
E\left[ \text{ess}\!\limsup_{\delta\rightarrow 0^+}  1_{\Omega'_{\tau}}\right]
\geq  \text{ess}\!\limsup_{\delta\rightarrow 0^+} E\left[  1_{\Omega'_{\tau}}\right]
\geq  
 \frac{3 }{128  \cdot e^{K(K+2) \Delta_n}   }  
>0.
\end{align*}
Hence, it holds that $u(t,x)\leq \phi(t,x)$, a.s. $\forall (t,x)\in[\underline t_{n-1},T]\times\bR^d$.

 Recursively on the intervals $[\underline t_{n-2},\underline t_{n-1}],\cdots,[0,\underline t_1]$, we finally prove that with probability 1, $u(t,x)\leq \phi(t,x)$, $\forall (t,x)\in[0,T]\times\bR^d$. The proof for the case of viscosity supersolution follows analogously. 
\end{proof}

\begin{rmk}
Due to the lack of spatial (global) integrability of $u$ (or $u$ possibly being nonzero at infinity), we introduce a penalty function $g$ in the above proof to ensure the existence of extreme points (for instance, in \eqref{eq-maxima}).
\end{rmk}

\subsection{Uniqueness of the viscosity solution}
We first discuss the uniqueness under an assumption allowing for possibly degenerate diffusion coefficient $\sigma$: 
 \medskip
 \\
\textbf{(A2)} the \textit{diffusion} coefficient $\sigma:[0,T] \rightarrow\bR^{d\times m}$  does not depend on $(\omega,x,\theta,\gamma)\in \Omega\times \bR^d\times \Theta_0\times\Gamma_0$. 

\vspace{2mm}

We note that under assumptions \textbf{(A1)} and \textbf{(A2)}, we have for $(t,x,A,B,p,y,z)\in [0,T]\times\bR^d\times\bR^{d\times d}\times \bR^{m\times d} \times   \bR^d\times \bR \times \bR^m$,
\begin{align*}
\mathbb{H}_-(t,x,A,B,p,y,z)
=&\,\text{tr}\left(\frac{1}{2}\sigma (t)\sigma'(t) A+\sigma(t) B\right)  
\\
&+  \esssup_{\theta\in\Theta_0}\essinf_{\gamma\in \Gamma_0} \bigg\{
       b'(t,x,\theta,\gamma)p 
       +f(t,x,y,z+\sigma'(t)p,\theta,\gamma)
                \bigg\}, \\
 \mathbb{H}_+(t,x,A,B,p,y,z)
=&\,\text{tr}\left(\frac{1}{2}\sigma(t) \sigma'(t) A+\sigma(t) B\right) \\
&+ \essinf_{\gamma\in \Gamma_0}\esssup_{\theta\in\Theta_0} \bigg\{
       b'(t,x,\theta,\gamma)p 
       +f(t,x,y,z+\sigma'(t)p,\theta,\gamma)
                \bigg\}.               
\end{align*}


\begin{thm}
Let assumptions $\textbf{({A}1)}$ and $\textbf{({A}2)}$ hold. The viscosity solution to  BSPDE \eqref{SHJBI-low} is unique in $\mathcal{S}^\infty(C_b(\mathbb{R}^d))$. 
\end{thm}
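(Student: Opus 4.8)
The plan is to prove that every viscosity solution $u\in\cS^\infty(C_b(\bR^d))$ of \eqref{SHJBI-low} coincides with the value function $V$, which by Theorem \ref{existence} is a viscosity $L$-solution; since $u$ is simultaneously a viscosity sub- and supersolution, it suffices to show $u\le V$ and $u\ge V$ a.s. on $[0,T]\times\bR^d$. The only comparison tool available is Theorem \ref{weakcomparisonthm}, which requires one of the two competitors to lie in $\sC^{2,\text{Lip}}_{\sF}$; as $V$ need not be that regular under \textbf{(A1)} alone, I would compare $u$ not with $V$ itself but with the value functions $V^\eps$ of suitably regularized games, and then pass to the limit $\eps\to0$.

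\textbf{Regularization.} Mollifying $b,f,\Phi$ in the space variable via the identity approximation \eqref{I-approx} while leaving $\sigma=\sigma(\cdot)$ untouched (legitimate by \textbf{(A2)}), one obtains coefficients $b^\eps,f^\eps,\Phi^\eps$ that are smooth in $x$, uniformly bounded and uniformly Lipschitz in $(x,y,z)$, with $|b^\eps-b|+|f^\eps-f|+|\Phi^\eps-\Phi|\le L\eps$; the corresponding Hamiltonian $\bH_-^\eps$ then satisfies \eqref{apprx-eps} with $\rho(\eps)=C\eps$, as already observed in the remark following Theorem \ref{thm-stability}. By Theorem \ref{existence}, the lower value function $V^\eps$ of the $\eps$-game is a viscosity $L$-solution of \eqref{SHJBI-low} with $\bH_-$ replaced by $\bH_-^\eps$, and the argument of Lemma \ref{reg-value-funct} gives $V^\eps\in\cS^\infty(C_b(\bR^d))$ with a bound uniform in $\eps$. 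The decisive point, which I expect to be the main obstacle, is the following regularity claim: under \textbf{(A2)} the second-order part $\text{tr}\big(\tfrac12\sigma\sigma'(t)D^2\cdot+\sigma(t)D\mathfrak{d}_\omega\cdot\big)$ is a fixed, non-random, control-free operator, so the $\eps$-equation is a semilinear BSPDE with smooth coefficients; invoking the regularity theory for such equations (after, if needed, an additional non-degenerate perturbation $\sigma\sigma'\mapsto\sigma\sigma'+\eps'I$ and a passage to the limit in $\eps'$), one shows that $V^\eps\in\sC^{2,\text{Lip}}_{\sF}$ and that $V^\eps$ is in fact a \emph{strong} (pointwise a.e.) solution of the $\eps$-equation, so that $\{-\mathfrak{d}_sV^\eps-\bH_-^\eps(D^2V^\eps,D\mathfrak{d}_\omega V^\eps,DV^\eps,V^\eps,\mathfrak{d}_\omega V^\eps)\}(s,y)\to0$ as $s\downarrow t$ for every $(t,y)\in[0,T)\times\bR^d$.

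\textbf{Comparison and limit.} By Remark \ref{rmk-defn-timechange} (applied with one $\lambda$ uniform in $\eps$) we may assume $\bH_-$ and $\bH_-^\eps$ are decreasing in $y$. Setting $\Psi^\eps_r:=\|V^\eps(r)\|_{C_b^2(\bR^d)}+\|D\mathfrak{d}_\omega V^\eps(r)\|_{C_b(\bR^d)}+M$, which has the regularity required of a test process and is bounded in $\eps$ in the relevant norms, and
\begin{align*}
\phi^\eps(s,x):=V^\eps(s,x)+L\eps+E_{\sF_s}\Big[\int_s^T\rho(\eps)\,\Psi^\eps_r\,dr\Big],
\end{align*}
the last two summands form a bounded space-invariant process in $\mathscr C^3_{\sF}$, so $\phi^\eps\in\sC^{2,\text{Lip}}_{\sF}$, $\phi^\eps(T,\cdot)\ge\Phi$, and $(u-\phi^\eps)^+\in\cS^\infty(C_b(\bR^d))$; moreover, since $D^2\phi^\eps=D^2V^\eps$, $D\phi^\eps=DV^\eps$, $D\mathfrak{d}_\omega\phi^\eps=D\mathfrak{d}_\omega V^\eps$, $\phi^\eps\ge V^\eps$, and $-\mathfrak{d}_s\phi^\eps=-\mathfrak{d}_sV^\eps+\rho(\eps)\Psi^\eps_s$, combining the monotonicity in $y$, the strong-solution property, and \eqref{apprx-eps} yields $\text{ess}\!\liminf_{s\to t^+}\{-\mathfrak{d}_s\phi^\eps-\bH_-(D^2\phi^\eps,D\mathfrak{d}_\omega\phi^\eps,D\phi^\eps,\phi^\eps,\mathfrak{d}_\omega\phi^\eps)\}(s,y)\ge0$ for all $(t,y)\in[0,T)\times\bR^d$. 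Theorem \ref{weakcomparisonthm} then gives $u\le\phi^\eps$ a.s. Letting $\eps\to0$, the coefficient-perturbation estimates used in the proof of Theorem \ref{thm-cont} give $\|V^\eps-V\|_{\cS^2(C_b(\bR^d))}\to0$ while the correction terms vanish, whence $u\le V$ a.s. The reverse inequality follows symmetrically by comparing $u$, now as a viscosity supersolution, with the subsolution competitor $\tilde\phi^\eps(s,x):=V^\eps(s,x)-L\eps-E_{\sF_s}[\int_s^T\rho(\eps)\Psi^\eps_r\,dr]$, for which $\tilde\phi^\eps(T,\cdot)\le\Phi$. Therefore $u=V$, proving uniqueness.

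In short, I expect the genuine difficulty to be confined to the regularity claim of the Regularization step, namely that under \textbf{(A2)} the value function of the space-mollified game lies in $\sC^{2,\text{Lip}}_{\sF}$ (matching the $h^1+h^2$ decomposition and finite time-partition of Definition \ref{defn-testfunc-2}) and solves its stochastic HJBI equation in the strong sense; once this is in hand, the monotonization, the construction of $\phi^\eps$ and $\tilde\phi^\eps$, the two applications of Theorem \ref{weakcomparisonthm}, and the passage $\eps\to0$ are routine.
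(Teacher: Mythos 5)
Your overall architecture is the same as the paper's: sandwich an arbitrary viscosity solution $u$ between regularized value functions lying in $\sC^{2,\text{Lip}}_{\sF}$, apply Theorem \ref{weakcomparisonthm} twice, and let the regularization parameter go to zero. The gap is exactly at the point you flag as ``the main obstacle,'' and the means you propose to close it do not work. Spatial mollification of $b,f,\Phi$ via \eqref{I-approx} leaves the coefficients as general random fields that are merely $\sP$-measurable in $(\omega,t)$; there is no regularity theory for the resulting stochastic HJBI equation that delivers membership in $\sC^{2,\text{Lip}}_{\sF}$ (in particular $DV^\eps\in\cS^4_{\text{loc}}(C_b^{1+\bar\alpha})$, the $h^1+h^2$ decomposition of $\mathfrak{d}_tV^\eps$ and $\mathfrak{d}_\omega V^\eps$, and the strong-solution property), because the obstruction is the roughness in $(\omega,t)$, not the lack of spatial smoothness. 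Your fallback of perturbing $\sigma\sigma'\mapsto\sigma\sigma'+\eps'I$ and ``passing to the limit in $\eps'$'' also fails: the parabolic estimates degenerate as $\eps'\to0$, and a nondegenerate perturbation of the second-order coefficient alone does not correspond to any controlled dynamics consistent with the original filtration, so the perturbed object is no longer a value function to which Theorem \ref{existence} applies.

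What the paper does instead is the missing idea. It uses Lemma \ref{lem-approx} to approximate the \emph{random} coefficients by smooth deterministic functions of finitely many Wiener increments $W_{t_1},\dots,W_{t_N}$, which makes the game Markovian on each subinterval $[t_j,t_{j+1})$ with $(x,W_t)$ as state; simultaneously it enlarges the probability space with an independent Brownian motion $B$ and perturbs the state dynamics by $\delta_N\,dB_t$, so that the resulting deterministic HJBI equation \eqref{markoviantype} is uniformly superparabolic and classical parabolic theory yields $C^{1+\bar\alpha/2,2+\bar\alpha}$ regularity; the It\^o--Kunita formula then shows $V^\eps(t,x-\delta_N B_t)$ solves its BSPDE in the strong sense and lies in $\sC^{2,\text{Lip}}_{\sF}$. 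Because the Lemma \ref{lem-approx} errors $b^\eps,f^\eps,\Phi^\eps$ are only controlled in $\cL^4$ (not uniformly in $t$, so your $\rho(\eps)=L\eps$ bound is unavailable for them), the paper's correction terms are the BSDE solutions $Y^\eps$ (driver $f^\eps_t+Mb^\eps_t$) and $\delta_N\bar M y_s$ (absorbing the $B$-shift), rather than your deterministic-in-form $L\eps+E_{\sF_s}[\int_s^T\rho(\eps)\Psi^\eps_r\,dr]$; your correction would be fine \emph{if} your regularization produced a $\sC^{2,\text{Lip}}_{\sF}$ strong solution, but it does not. Once the paper's $\overline V^\eps,\underline V^\eps$ are in hand, the comparison and the $L^2$-convergence $V^\eps\to V$ proceed essentially as you describe.
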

\begin{proof}
Define
{\small
\begin{align*}
\overline{\mathscr V}=\Big\{\phi\in\mathscr C_{\sF}^{2,\text{Lip}} &: \phi^-\in\mathcal{S}^\infty(C_b(\mathbb{R}^d)), \phi(T,x) \geq \Phi(x) ,\ \forall x\in\mathbb{R}^d , \text{a.s., and for each } (t,y)\in [0,T)\times\mathbb{R}^d,
\\ 
& \text{ess}\!\liminf_{s\rightarrow t^+} \left\{-\mathfrak{d}_s\phi-\bH_{-}(D^2\phi,D\mathfrak{d}_\omega\phi,D\phi,\phi,\mathfrak{d}_\omega\phi)\right\}(s,y)\geq0,
\text{ a.s.} \Big\},\\
\underline{\mathscr V}=\Big\{\phi\in\mathscr C_{\sF}^{2,\text{Lip}} &: \phi^+\in\mathcal{S}^\infty(C_b(\mathbb{R}^d)),  \Phi(x)\geq \phi(T,x),\ \forall x\in\mathbb{R}^d, \text{a.s., and for each } (t,y)\in [0,T)\times\mathbb{R}^d, 
\\ & \text{ess}\!\limsup_{s\rightarrow t^+}\left\{-\mathfrak{d}_s\phi-\bH_{-}(D^2\phi,D\mathfrak{d}_\omega\phi,D\phi,\phi,\mathfrak{d}_\omega\phi)\right\}(s,y)\leq0, \text{ a.s.} \Big\},
\end{align*}
}
and set $$\overline{u}=\essinf_{\phi\in\overline{\mathscr V}}\phi,\quad\underline{u}=\esssup_{\phi\in\underline{\mathscr V}}\phi.$$
Notice that for each $(\overline \phi,\,\underline\phi)\in \overline{\mathscr V} \times  \underline{\mathscr V}$, we have $\overline \phi^-\in \mathcal{S}^\infty(C_b(\mathbb{R}^d))$ and $\underline \phi^+\in \mathcal{S}^\infty(C_b(\mathbb{R}^d))$. For  each viscosity solution $u\in\mathcal{S}^\infty(C_b(\mathbb{R}^d))$, we have $(u-\overline \phi)^+\in \mathcal{S}^\infty(C_b(\mathbb{R}^d))$ and $(\underline \phi-u)^+\in \mathcal{S}^\infty(C_b(\mathbb{R}^d))$, and Theorem \ref{weakcomparisonthm} indicates that $\underline{u}\leq u\leq\overline{u}$. Therefore, for the uniqueness of viscosity solution, it is sufficient to verify $\underline{u}= V= \overline{u}$.

For each fixed $\eps\in(0,1)$, select $(\Phi^{\eps},\,f^{\eps},\,b^{\eps})$ and $(\Phi_N,f_N,b_N)$ as in Lemma \ref{lem-approx}; we do not need the approximations for $\sigma$ here. 

Let $(\Omega',\sF',\{\sF'_t\}_{t\geq 0}, \bP')$ be another complete filtered probability space which carries a $d-$dimensional standard Brownian motion $B=\{B_t\, :\, t\geq 0\}$ with $\{\sF'_t\}_{t\geq 0}$ generated by $B$ and augmented by all the $\bP'$-null sets in $\sF'$. Set
$$
(\bar\Omega,\bar\sF,\{\bar\sF_t\}_{t\geq 0},\bar\bP)=
(\Omega\times\Omega',\sF \otimes\sF',\{\sF_t\otimes\sF'_t \}_{t\geq 0},\bP \otimes\bP').
$$
Then $B$ and $W$ are independent on $(\bar\Omega,\bar\sF,\{\bar\sF_t\}_{t\geq 0},\bar\bP)$ and it is easy to see that all the theory established in previous sections still hold on the enlarged probability space.

Recalling the standard theory of BSDEs (see \cite{Hu_2002} for instance), let the  pairs  $(Y^{\eps},Z^{\eps})\in \cS^2_{\sF}(\bR)\times \cL^2_{\sF}(\bR^{m})$ and $(y,z)\in  \cS^2_{\sF'}(\bR)\times \cL^2_{\sF'}(\bR^{d})$   be the solutions  of backward SDEs
$$
Y_s^{\eps}=\Phi^{\eps}+
	\int_s^T\left(f^{\eps}_t+M b^{\eps}_t\right)\,dt
		-\int_s^TZ^{\eps}_t\,d W_t,
$$
 and 
 $$
 y_s=
 |B_T|+\int_{s}^T |B_t| \,dt-\int_s^T z_t\,dB_t,
 $$
 respectively, and for each $(s,x)\in[0,T)\times\bR^d$,  set
\begin{align*}
{V}^{\eps}(s,x)
&=\essinf_{\mu\in\cM}\esssup_{\theta\in\Theta}  G_{s,T}^{N,s,x;\theta,\mu(\theta)} \bigg[
	\Phi_N\left( W_{t_1},\cdots, W_{t_N},X^{s,x;\theta,\mu(\theta),N}_T\right)
		\bigg].
\end{align*}
Here, the value of constant $M$ is to be determined, the process $X^{s,x;\theta,\mu(\theta),N}$ satisfies SDE
\begin{equation*}\label{state-proces-contrl}
\left\{
\begin{split}
&dX_t^{s,x;\theta,\mu(\theta),N}=b_N( W_{t_1\wedge t},\cdots, W_{t_N\wedge t},t,X_t^{s,x;\theta,\mu(\theta),N},\theta_t,\mu(\theta)(t))dt\\&\hspace{3cm}+\sigma(t)\,dW_t+\delta_N\,dB_t,\ 
\ t\in[s,T]; \\
& X_s^{s,x;\theta,\mu(\theta),N}=x,
\end{split}
\right.
\end{equation*}
with $\delta_N$ being a positive constant, and we adopt the (backward) semigroup
 \begin{align*}
G_{t,T}^{N,s,x;\theta,\gamma}[\eta]:=\overline Y_t^{s,x;\theta,\gamma},\quad t\in [s,T],
\end{align*}
with $\overline Y^{s,x;\theta,\gamma}$ together with $\overline Z^{s,x;\theta,\gamma}$ and $\tilde Z^{s,x;\theta,\gamma}$ satisfies the following BSDE:
\begin{equation}\label{BSDE-semigroup-N}
  \left\{
  \begin{split}
  -d\overline Y_t^{s,x;\theta,\gamma}&=\,f_N\left( W_{t_1\wedge t},\cdots, W_{t_N\wedge t},t,X^{s,x;\theta,\mu(\theta),N}_t,\overline Y_t^{s,x;\theta,\gamma},\overline Z_{t}^{s,x;\theta,\gamma},\theta_t,\gamma_t\right)\,dt 
  \\
  &\quad -\overline Z_{t}^{s,x;\theta,\gamma}\,dW_t
  -\tilde Z_{t}^{s,x;\theta,\gamma}\,dB_t,\quad t\in[s, T];\\
 \overline Y_{T}^{s,x;\theta,\gamma}&=\eta.
    \end{split}
  \right.
\end{equation}
 
%
The theory of stochastic differential games  (see \cite{buckdahnLi-2008-SDG-HJBI}) yields that when $s\in[t_{N-1},T)$, we have $V^{\eps}(s,x)=\tilde V^{\eps}(s,x,  W_{t_1},\cdots,  W_{t_{N-1}}, W_s)$ with
\begin{align*}
&\tilde{V}^{\eps}(s,x, W_{t_1},\cdots,  W_{t_{N-1}},y)\\
=&\essinf_{\mu\in\cM}\esssup_{\theta\in\Theta} 
G_{s,T}^{N,s,x;\theta,\mu(\theta)}\left[   \Phi_N\left(  W_{t_1},\cdots,  W_{t_N},X^{s,x;\theta,\mu(\theta),N}_T\right)   \right] \Big |_{W_s=y}
\end{align*}
satisfying the following semilinear superparabolic HJBI equation: 
\begin{equation}\label{markoviantype}
\left\{\begin{split} 
-D_tu(t,x,y)&=
\frac{1}{2}tr(D^2_{yy}u(t,x,y))+\frac{\delta_N^2}{2}tr(D^2_{xx}u(t,x,y))  
\\
&\quad+  tr\Big(\frac{1}{2}\sigma\sigma'(t)D^2_{xx}u(t,x,y)+{\sigma}(t)D^2_{xy}u(t,x,y)\Big)
\\
&\quad
+ \esssup_{\theta\in\Theta_0}\essinf_{\gamma\in\Gamma_0}\Big\{
b'_N( W_{t_1},\cdots,  W_{t_{N-1}},y,t,x,\theta,\gamma)D_xu(t,x,y)
\\
&\quad+
f_N( W_{t_1},\cdots,  W_{t_{N-1}},y,t, x,,u(t,x,y),(D_yu+\sigma'D_xu)(t,x,y), \theta,\gamma)\Big\},
\\
&\quad\quad (t,x,y)\in[t_{N-1},T)\times\mathbb{R}^d\times\mathbb{R}^{m};
\\u(T,x,y)&=\Phi_N( W_{t_1},\cdots, W_{t_{N-1}},y,x),\quad (x,y)\in\mathbb{R}^d\times\mathbb{R}^{m},
\end{split}\right.
\end{equation}
and the theory of parabolic PDEs gives
$$\tilde{V}^\eps(\cdot,\cdot, W_{t_1},\cdots,  W_{t_{N-1}},\cdot)\in L^\infty\left(\Omega,{\sF}_{t_{N-1}}; C_b^{1+\frac{\bar{\alpha}}{2},2+\bar{\alpha}}([t_{N-1},T)\times\mathbb{R}^d)\cap C_b([t_{N-1},T]\times\mathbb{R}^d)\right),$$
for some $\bar{\alpha}\in(0,1)$, where the time-space H\"{o}lder space $C_b^{1+\frac{\bar{\alpha}}{2},2+\bar{\alpha}}([t_{N-1},T)\times\mathbb{R}^d)$ is defined as usual. 
We can make similar arguments on time interval $[t_{N-2},t_{N-1})$ taking the obtained $V^\eps(t_{N-1},x)$ as the terminal value and recursively on the intervals $[t_{N-3},t_{N-2}),\cdots,[0,t_1)$. Then, applying the It\^o-Kunita formula to $\tilde{V}^{\eps}(s,x, W_{t_1},\cdots,  W_{t_{N-1}},y)$ on $[t_{N-1},T]$ yields that 
 \begin{equation}\label{SHJBI-N}
  \left\{\begin{array}{l}
  \begin{split}
  -dV^{\eps}(t,x-\delta_N B_t)&= tr\Big(\frac{1}{2}\sigma(t)\sigma'(t)D^2_{xx}V^{\eps}(t,x-\delta_N B_t)
  \\
  &\quad\quad+{\sigma}(t)D^2_{xy}  \tilde{V}^{\eps}(t,x-\delta_NB_t,  W_{t_1},\cdots, W_{t_{N-1}},W_t)       \Big)dt
\\
+
& \esssup_{\theta\in\Theta_0}\essinf_{\gamma\in\Gamma_0}\Big\{
b'_N( W_{t_1},\cdots,  W_{t_{N-1}},W_t,t,x-\delta_NB_t,\theta,\gamma)D_xV^{\eps}(t,x-\delta_N B_t)
\\
+&
f_N( W_{t_1},\cdots,  W_{t_{N-1}},W_t,t,x-\delta_N B_t,V^{\eps},(D_y\tilde V^{\eps}+\sigma'D_xV^{\eps}), \theta,\gamma)\Big\}dt
\\
-&
D_{y} \tilde{V}^{\eps}(t,x-\delta_N B_t,  W_{t_1},\cdots, W_{t_{N-1}},W_t) \,d W_t +\delta_N D_{x} V^{\eps}(t,x-\delta_N B_t)\,dB_t,
\\
V^{\eps}(T,x-\delta_N B_T)&=\Phi_N( W_{t_1},\cdots, W_{t_{N-1}},W_T,x-\delta_N B_T).
    \end{split}
  \end{array}\right.
\end{equation}

In view of the approximations in Lemma \ref{lem-approx} and with an analogy to Lemma \ref{reg-value-funct}, there exists $\tilde{L}>0$ such that 
$$
\max_{(t,x)\in[0,T]\times \bR^d}\left\{ |DV^{\eps}(t,x)|   \right\}
\leq \tilde L,\,\,\,\text{a.s.}
$$
with $\tilde L$ being independent of $\eps$ and $N$. Set $M=\tilde L$ and
\begin{align*}
\overline{V}^{\eps}(s,x)
&=
	V^{\eps}(s,x-\delta_N B_s)+Y^{\eps}_s+\delta_N \bar M y_s ,\\
\underline{V}^{\eps}(s,x)
&=
	V^{\eps}(s,x-\delta_N B_s)-Y^{\eps}_s-\delta_N \bar M y_s,
\end{align*}
with $\bar M=4L(\tilde L+1)$ and $L$ the constant in  $\textbf{({A}1)}$.

By Assumption  $\textbf{({A}1)}$, we have
\begin{align*}
|b(t,x,\theta,\gamma)-b(t,x-\delta_N B_t,\theta,\gamma)|
+|f(t,x,y,p,\theta,\gamma)-f(t,x-\delta_N B_t,y,p,\theta,\gamma)| 
& \leq 2\delta_N L |B_t|,\\
|G(x)-G(x-\delta_NB_T)|
&\leq \delta_N L |B_T|.
\end{align*}
Also, by Remark \ref{rmk-defn-timechange}, we may, w.l.o.g., assume that $\mathbb{H}_-(t,x,A,B,p,y,z)$ is decreasing in $y$.
Then for $\overline V^{\eps}$ on $[t_{N-1},T)$, omitting the inputs for some involved functions, we have
\begin{align}
&-\mathfrak{d}_{t}\overline V^{\eps}-\bH_{-}(D^2\overline V^{\eps},D\mathfrak{d}_\omega\overline V^{\eps},D\overline V^{\eps},\overline V^{\eps},\mathfrak{d}_\omega\overline V^{\eps})
\nonumber\\
=&\,
	-\mathfrak{d}_{t}\overline V^{\eps}
	-tr\Big(\frac{1}{2}\sigma\sigma'D^2_{xx}\overline V^{\eps} +{\sigma}D_{x}\mathfrak{d}_\omega \overline V^{\eps}\Big)
	\notag   \\
&\,
	-\esssup_{\theta\in\Theta_0}\essinf_{\gamma\in \Gamma_0} \bigg\{
  		     (b_N)'D\overline V^{\eps}
	  +f^N+f^{\eps}+ \tilde L  b^{\eps}\nonumber+\delta_N\bar M |B_t|\\
&
	  \,
	  +   \left(b-b_N\right)' D\overline V^{\eps}       -b^{\eps} \tilde L
	  +f-f^N-f^{\eps}-\delta_N \bar M |B_t|
	   \bigg\}
\nonumber\\
\geq&\,
	-\mathfrak{d}_{t}\overline V^{\eps}
	-tr\Big(\frac{1}{2}\sigma\sigma'D^2_{xx}\overline V^{\eps} +{\sigma}D_{x}\mathfrak{d}_\omega \overline V^{\eps}\Big)
	\notag \\
&\,
	-\esssup_{\theta\in\Theta_0}\essinf_{\gamma\in \Gamma_0} \bigg\{
  		     (b_N)'D\overline V^{\eps}
	  +f^N+f^{\eps}+ \tilde L  b^{\eps}\nonumber+\delta_N\bar M |B_t|
	   \bigg\}
	 \label{est-A4}    \\
=&\, 0,
\end{align}
and it follows similarly on intervals $[t_{N-2},t_{N-1})$, $\dots$, $[0,t_1)$ that
$$
-\mathfrak{d}_{t}\overline V^{\eps}-\bH_{-}(D^2\overline V^{\eps},D\mathfrak{d}_\omega\overline V^{\eps},D\overline V^{\eps},\overline V^{\eps},\mathfrak{d}_\omega\overline V^{\eps})
		 \geq 0,
$$
which together with the obvious relation  $\overline V^{\eps}(T)=\Phi_N+\Phi^{\eps}+ \delta\bar M |B_T|  \geq \Phi $ indicates that $\overline V^{\eps}\in \overline {\mathscr V}$. Analogously, $\underline V^{\eps}\in \underline {\mathscr V}$.

Now let us measure the distance between $\underline V^{\eps}$, $\overline V^{\eps}$ and $V$. By the estimates for solutions of BSDEs (see \cite[Proposition 3.2]{Hu_2002} for instance), we first have 
\begin{align*}
\|Y^{\eps}\|_{\cS^4(\bR)} + \|Z^{\eps}\|_{\cL^4(\bR^{m})}
&
	\leq  C \left(  \|\Phi^{\eps}\|_{L^4(\Omega,\sF_T;\bR)} + \|f^{\eps}+\tilde L b^{\eps}\|_{\cL^4(\bR)}  \right)
\\
&
	\leq  C(1+\tilde L)\eps,
\end{align*}
with the constant $C$ independent of $N$ and $\eps$.  Fix some $(s,x)\in[0,T)\times \bR^d$. In view of the approximation in Lemma \ref{lem-approx}, using It\^o's formula, Burkholder-Davis-Gundy's inequality, and Gronwall's inequality, we have through standard computations that for any $(\theta,\mu)\in\Theta\times\mathcal M$,
\begin{align*}
&
	E_{\sF_s} \left [  \sup_{s\leq t\leq T} \left|  X^{s,x;\theta,\mu(\theta),N}_t-X^{s,x;\theta,\mu(\theta)}_t   \right| ^2  \right]
\\
&\leq
	\tilde C\bigg(\delta_N ^2
	+ E_{\sF_s}\int_s^T\left| b_N\left(\tilde W_{t_1\wedge t},\cdots,\tilde W_{t_N\wedge t},t,X^{s,x;\theta,\mu(\theta),N}_t,\theta_t,\mu(\theta)(t)\right) \right.
	\\
&\qquad\qquad\qquad\qquad\quad
	\left.-b\left(t,X^{s,x;\theta,\mu(\theta),N}_t,\theta_t,\mu(\theta)(t)\right) \right|^2\,dt\bigg)
	\\
&\leq
\tilde C \left(\delta_N^2+  E_{\sF_s}\int_s^T   \left| b^{\eps}_t\right|^2\,dt\right) ,
\end{align*}
with $\tilde C$ being independent of $s,\, x,\,N$, $\eps$, and $(\theta,\mu)$. Then the standard estimates for solutions of BSDEs indicate that
\begin{align*}
&E \left[  \left| V^{\eps}(s,x)-V(s,x)\right|^2\right]
\\\leq&\, C
		E \bigg[ \esssup_{(\theta,\mu)\in\Theta\times\cM}
		E_{\sF_s}\bigg[ 
		\int_s^T\Big( |f^{\eps}_t|^2 +L^2 \Big| X^{s,x;\theta,\mu(\theta),N}_t - X^{s,x;\theta,\mu(\theta)}_t \Big|^2    \Big)\,dt
\\&\quad\quad\quad
		+\left|\Phi^{\eps}\right|^2 
			+L^2\Big| X^{s,x;\theta,\mu(\theta),N}_T -X^{s,x;\theta,\mu(\theta)}_T \Big|^2
		\bigg] \bigg]
\\\leq &\, C
		E \bigg[ \esssup_{(\theta,\mu)\in\Theta\times\cM}
		E_{\sF_s}\bigg[ 
		\int_s^T    \left( \left| f^{\eps}_t\right|^2+ \left| b^{\eps}_t\right|^2\right)\,dt
		+\left|\Phi^{\eps} \right|^2+|\delta_N|^2
		\bigg] \bigg]
	\\ \leq&\, C \left(     
	E\left[\int_s^T    \left( \left| f^{\eps}_t\right|^4+ \left| b^{\eps}_t\right|^4\right)\,dt
		+\left|\Phi^{\eps} \right|^4 \right]\right)^{1/2}
	 + C |\delta_N|^2 
\\\leq&\, K_0 \left(\eps^2+|\delta_N|^2\right),
\end{align*}
with the constant $K_0$ being independent of $N$, $\eps$, and $(s,x)$. Furthermore, in view of the definitions of $\overline V^{\eps}$ and $\underline V^{\eps}$, there exists some constant $C_1$ independent of $\eps$ and $N$ such that
\begin{align*}
E\left| \overline V^{\eps}(s,x)-V(s,x)\right|^2
+E\left| \underline V^{\eps}(s,x)-V(s,x)\right|^2
\leq C_1 \left(\eps^2+|\delta_N|^2\right), \quad \forall \, (s,x)\in [0,T]\times\bR^d.
\end{align*}
The arbitrariness of $(\eps,\delta_N)$ together with the relation $\overline V^{\eps}\geq V \geq \underline V^{\eps}$ finally implies that $\underline u=V= \overline u$.  
\end{proof}

\begin{rmk}\label{rmk-unique}
In view of the above proof, we may see that the condition \textbf{(A2)} is assumed because of the possible degenerateness and the lack of certain estimates for $D_{xx}^2V$ and $D_x\mathfrak{d}_\omega V$. The lack of such estimates and the superparabolicity prevents us from using the perturbations of $\sigma$, which is why $\sigma$ may not depend on $(\omega,x,\theta,\gamma)$ in \textbf{(A2)}. 
\end{rmk}

We may now consider the superparabolic cases. In a similar way to \cite{qiu2017viscosity}, we first decompose the Wiener process $W=(\tilde{W},\bar{W})$ with $\tilde{W}$ and $\bar{W}$ being two mutually independent and respectively, $m_0$ and $m_1$($=m-m_0$) dimensional  Wiener processes. We also adopt the decomposition $\sigma=(\tilde{\sigma},\bar{\sigma})$ with $\tilde{\sigma}$ and $\bar{\sigma}$ valued in $\bR^{d\times m_0}$ and $\bR^{d\times m_1}$ respectively for the controlled diffusion coefficient $\sigma$, and associated with $(\tilde{W}, \bar{W})$. 
Denote by $\{\tilde{\sF}_t\}_{t\geq0}$ the natural filtration generated by $\tilde{W}$ and augmented by all the
$\bP$-null sets. Then we say the \textit{superparabolicity} holds if
\medskip\\
 (i)  For each $(t,x,y,z,{\theta},\gamma)\in [0,T]\times\bR^d\times \bR\times \bR^m\times \Theta_0\times \Gamma_0$,  $\Phi(x)$ is $\tilde{\sF}_T$-measurable and for the  random variables $h=b^i(t,x,\theta,\gamma),f(t,x,y,z,\theta,\gamma)$, $i=1,\cdot, d$,
$$
  h:~\Omega\rightarrow \bR
\text{ is } \tilde{\sF}_t \text{-measurable;}
$$
 (ii) The \textit{diffusion} coefficient $\sigma=(\tilde \sigma,\bar\sigma):~[0,T]\times\bR^d\times \Theta_0\times \Gamma_0\rightarrow\bR^{d\times m}$ is continuous and  does not depend on $\omega$, and there exists $\lambda\in(0,\infty)$  such that
   \begin{align*}
        \sum_{i,j=1}^d\sum_{k=1}^{m_1} \bar\sigma^{ik}\bar\sigma^{jk}(t,x,\theta,\gamma)\xi^i\xi^j\geq \lambda |\xi|^2\quad \,\,\forall\, (t,x,\theta,\gamma,\xi)\in [0,T]\times\bR^d \times \Theta_0\times \Gamma_0\times\bR^d.
   \end{align*}

In fact, instead of \textbf{(A2)}, we may assume that there hold the superparabolicity and either of the following three conditions: 
 \\
\textbf{(A2$^*$)} the \textit{diffusion} coefficient $\sigma:[0,T]\times \bR^d \times \Theta_0 \rightarrow\bR^{d\times m}$  does not depend on $(\omega,\gamma)\in \Omega \times\Gamma_0$.
\\
\textbf{(A2$^{**}$)} the \textit{diffusion} coefficient $\sigma:[0,T]\times \bR^d \times \Gamma_0 \rightarrow\bR^{d\times m}$  does not depend on $(\omega,\theta)\in \Omega \times\Theta_0$.
\\ 
\textbf{(A2$^{***}$)} $d\leq 2$ and the \textit{diffusion} coefficient $\sigma:[0,T]\times \bR^d \times\Theta_0\times \Gamma_0 \rightarrow\bR^{d\times m}$  does not depend on $\omega\in \Omega$.
\\
In either of these three cases,  one does not need to enlarge the probability space to introduce another independent Wiener process $B$. Instead, for the cases of \textbf{(A2$^{*}$)} and \textbf{(A2$^{**}$)}, one may utilize the $C^{1+\frac{\bar\alpha}{2},2+\bar\alpha}-$estimate (for some $\bar\alpha \in (0,1)$) of viscosity solutions to deterministic HJB equations (see  \cite[Proposition 3.7]{wang1992-I} and \cite[Theorem 1.1]{krylov1982boundedly} for instance), while for the case of \textbf{(A2$^{***}$)}, one may use the $C^{1,2}-$estimate (that is actually sufficient) of viscosity solutions to deterministic  HJBI equations (see \cite[Lemma 6.5]{pham2014two} for instance). The proofs will then follow in a similar way to that of \cite[Theorem 5.6]{qiu2017viscosity}, and they are omitted.\\

 \begin{rmk}
Similar results on the uniqueness hold for BSPDE \eqref{SHJBI-up} (equivalently \eqref{SHJB-up-eqiv}). 
\end{rmk}

Finally, assume the Isaacs condition: 
\begin{equation}\label{Isaasccondition}
\mathbb{H}_-(t,x,A,B,p,y,z)=\mathbb{H}_+(t,x,A,B,p,y,z)=:\mathbb{H}(t,x,A,B,p,y,z), 
\end{equation}
for all $(t,x,A,B,p,y,z)\in [0,T]\times\bR^d\times\bR^{d\times d}\times \bR^{m\times d} \times   \bR^d\times \bR \times \bR^m$,
and consider the following BSPDE:
\begin{equation}\label{IsaacsBSPDE}
  \left\{
  \begin{split}
  -du(t,x)=\,& 
 \mathbb{H}(t,x,D^2u,D\zeta,Du,u,\zeta) 
 \,dt -\zeta(t,x)\, dW_{t}, \quad
                     (t,x)\in Q;\\
    u(T,x)=\, &\Phi(x), \quad x\in\bR^d.
    \end{split}
  \right.
\end{equation}

\begin{thm}\label{mainresultbis}
Let Assumption {\bf (A1)} and the Isaacs condition \eqref{Isaasccondition} hold. Assume further that the uniqueness of viscosity solution to BSPDE \eqref{IsaacsBSPDE} holds. Then the game value exists, i.e., $V=U=:u$ with $u$ being the unique viscosity solution to BSPDE \eqref{IsaacsBSPDE}. 
\end{thm}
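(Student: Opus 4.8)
The plan is to combine the existence result (Theorem~\ref{existence}) with the assumed uniqueness for BSPDE~\eqref{IsaacsBSPDE}. By Theorem~\ref{existence}, the lower value function $V$ is a viscosity $L$-solution of the stochastic HJBI equation~\eqref{SHJBI-low}, and the upper value function $U$ is a viscosity $L$-solution of~\eqref{SHJBI-up}. Under the Isaacs condition~\eqref{Isaasccondition}, the Hamiltonians $\bH_-$ and $\bH_+$ coincide with the single Hamiltonian $\bH$, so both~\eqref{SHJBI-low} and~\eqref{SHJBI-up} are literally the same equation~\eqref{IsaacsBSPDE}. Hence $V$ and $U$ are both viscosity solutions (indeed viscosity $L$-solutions) of~\eqref{IsaacsBSPDE} belonging to $\cS^{\infty}(C_b(\bR^d))$ by Lemma~\ref{reg-value-funct}.

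The key step is then to invoke the assumed uniqueness: since the definition of viscosity solution for~\eqref{IsaacsBSPDE} is exactly the common specialization of Definitions~\ref{defn-viscosity} applied to~\eqref{SHJBI-low} and~\eqref{SHJBI-up} under~\eqref{Isaasccondition}, and since both $V$ and $U$ lie in the class $\cS^{\infty}(C_b(\bR^d))\subset\cS^2(C_b(\bR^d))$ in which uniqueness is postulated to hold, we conclude $V=U$ a.s. on $[0,T]\times\bR^d$. Setting $u:=V=U$, this $u$ is the unique viscosity solution of~\eqref{IsaacsBSPDE}, and the existence of the game value follows by definition: $V(t,x)=\esssup_{\alpha\in\cA}\essinf_{\gamma\in\Gamma}J(t,x;\alpha(\gamma),\gamma)=\essinf_{\mu\in\cM}\esssup_{\theta\in\Theta}J(t,x;\theta,\mu(\theta))=U(t,x)$.

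One small technical point I would check carefully is that the notion of "viscosity solution of BSPDE~\eqref{IsaacsBSPDE}" is unambiguous — that is, that applying Definition~\ref{defn-viscosity} with $\bH_-$ replaced by $\bH$ yields the same class whether one arrives at it from the subsolution/supersolution conditions of~\eqref{SHJBI-low} or from those of~\eqref{SHJBI-up}. Under~\eqref{Isaasccondition} this is immediate since the test-function inequalities in~\eqref{defn-vis-sub}--\eqref{defn-vis-sup} only involve the Hamiltonian through the symbol $\bH_-(\cdot)=\bH(\cdot)$, and the test-function sets $\underline{\cG}V$ and $\overline{\cG}V$ do not depend on the Hamiltonian at all. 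I would also note that the number $K_0$ associated to the viscosity solution plays no role in the identification, as uniqueness is assumed within $\cS^\infty(C_b(\bR^d))$ without reference to $K_0$.

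There is essentially no obstacle here: the theorem is a direct corollary, and the entire content has been shifted into the hypothesis "the uniqueness of viscosity solution to BSPDE~\eqref{IsaacsBSPDE} holds," which is precisely what the preceding subsection establishes under the additional structural assumptions~\textbf{(A2)} or the superparabolicity conditions~\textbf{(A2$^*$)}/\textbf{(A2$^{**}$)}/\textbf{(A2$^{***}$)}. Accordingly, the proof I would write is just two or three sentences assembling Theorem~\ref{existence}, the Isaacs condition, and the uniqueness hypothesis, and then reading off $V=U=u$.
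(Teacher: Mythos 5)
Your proposal is correct and follows exactly the same route as the paper: invoke Theorem \ref{existence} to see that $V$ and $U$ are viscosity solutions of \eqref{SHJBI-low} and \eqref{SHJBI-up}, note that under the Isaacs condition \eqref{Isaasccondition} both equations are the single BSPDE \eqref{IsaacsBSPDE}, and conclude $V=U$ from the assumed uniqueness. The paper's own proof is the same two-line argument, so nothing further is needed.
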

\begin{proof}
By Theorem \ref{existence} and the uniqueness of viscosity solutions, we obtain immediately that $V=U$ and it is the unique viscosity solution to BSPDE \eqref{IsaacsBSPDE}. 
\end{proof}



\begin{appendix}
\section{Measurable selection theorem}
The following measurable selection theorem is referred to \cite{wagner1977survey}.
\begin{thm}\label{thm-MS}
Let $(\Lambda,\mathscr M)$ be a measurable space equipped with a nonnegative measure $\mu$ and let $(\cO,\cB(\cO))$ be a polish space. Suppose $F$ is a set-valued function from $\Lambda$ to $\cB(\cO)$ satisfying: (i) for $\mu$-a.e. $\lambda \in\Lambda$, $F(\lambda)$ is a closed nonempty subset of $\cO$; (ii) for any open set $O\subset \cO$, $\{\lambda:\, F(\lambda)\cap O\neq \emptyset\}\in \mathscr M$. Then there exists a  measurable function $f$: $(\Lambda,\mathscr M)\rightarrow (\cO,\cB(\cO))$ such that for $\mu$-a.e. $\lambda\in\Lambda$, $f(\lambda)\in F(\lambda)$. 
\end{thm}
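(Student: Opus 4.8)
The plan is to prove this via the classical Kuratowski--Ryll-Nardzewski scheme of successive approximation by countably-valued measurable maps. First I would discard a $\mu$-null set and work on the full-measure set $\Lambda_0\subset\Lambda$ on which $F(\lambda)$ is a closed nonempty subset of $\cO$; off $\Lambda_0$ the selector $f$ can be set to an arbitrary fixed point of $\cO$, which affects nothing up to $\mu$-a.e. equality. Fix a complete metric $d$ compatible with the Polish topology on $\cO$, assumed bounded by $1$ without loss of generality, and fix a countable dense set $\{x_n\}_{n\geq1}\subset\cO$. The only structural input beyond this is hypothesis (ii), from which I would first record the workhorse fact: for every $x\in\cO$ and $r>0$ the ball $B(x,r)$ is open, so $\{\lambda:\,F(\lambda)\cap B(x,r)\neq\emptyset\}\in\mathscr M$; forming finite and countable Boolean combinations of such sets, and intersecting them with previously-built measurable pieces, is what keeps the whole construction inside $\mathscr M$.

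Next I would build inductively a sequence of measurable maps $f_k:\Lambda\to\cO$, each taking values in the countable set $\{x_n\}$, such that for every $\lambda\in\Lambda_0$ one has $d\big(f_k(\lambda),F(\lambda)\big)<2^{-k}$ and $d\big(f_{k+1}(\lambda),f_k(\lambda)\big)<2^{-k+1}$. For the base step, let $n(\lambda)$ be the least index $n$ with $F(\lambda)\cap B(x_n,2^{-1})\neq\emptyset$ (which exists by density and nonemptiness), and set $f_1=\sum_n x_n\,\mathbf 1_{E_n}$ on the disjoint sets $E_n=\{\lambda:\,n(\lambda)=n\}$, each of which is a finite Boolean combination of the measurable sets $\{\lambda:\,F(\lambda)\cap B(x_m,2^{-1})\neq\emptyset\}$ and hence lies in $\mathscr M$. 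For the inductive step, on the piece where $f_k=x_n$ I would select, again by the least-index rule, a dense point $x_m$ with $F(\lambda)\cap B(x_m,2^{-(k+1)})\neq\emptyset$ and $d(x_m,x_n)<2^{-k+1}$: such an $x_m$ exists because $d(x_n,F(\lambda))<2^{-k}$ gives a point $y\in F(\lambda)$ with $d(x_n,y)<2^{-k}$, and density provides $x_m$ with $d(x_m,y)<2^{-(k+1)}$, whence the triangle inequality yields both bounds. The resulting refinement is measurable for the same Boolean-combination reason, the index condition $d(x_m,x_n)<2^{-k+1}$ being a constant constraint on indices.

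Then I would pass to the limit. By construction $(f_k(\lambda))_{k\geq1}$ is Cauchy in $(\cO,d)$ for each $\lambda\in\Lambda_0$, since $\sum_{j\geq k}2^{-j+1}<\infty$, so by completeness it converges to some $f(\lambda)$, and $f$ is measurable as a pointwise limit of $\mathscr M$-measurable maps into the metric space $\cO$. Finally, since $d\big(f_k(\lambda),F(\lambda)\big)<2^{-k}\to0$ and $F(\lambda)$ is closed, the limit satisfies $f(\lambda)\in F(\lambda)$ for all $\lambda\in\Lambda_0$, that is, for $\mu$-a.e. $\lambda$, which is the desired selection. The step I expect to require the most care is the inductive construction: one must check at each stage both that a valid closer point of the dense set can be chosen (the quantitative density estimate combined with the triangle inequality above) and that the sets prescribing where each value $x_m$ is taken remain in $\mathscr M$, which is precisely where hypothesis (ii) is invoked repeatedly through the measurability of the ball-hitting sets.
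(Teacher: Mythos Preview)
Your proof is the classical Kuratowski--Ryll-Nardzewski scheme and is correct as sketched. Note, however, that the paper does not itself prove this statement: it simply records the result in an appendix and refers the reader to Wagner's survey \cite{wagner1977survey} on measurable selection. So there is no paper proof to compare against; you have supplied a self-contained argument where the authors chose to cite the literature.

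One small point worth making explicit in your write-up: to ensure $f$ is genuinely $\mathscr{M}$-measurable (and not merely measurable with respect to the $\mu$-completion), you should run the least-index construction on the set $\{\lambda:\,F(\lambda)\neq\emptyset\}$, which lies in $\mathscr{M}$ by hypothesis (ii) applied to $O=\cO$, and define $f$ to be a fixed point on its complement. The closedness hypothesis from (i) is then only invoked at the very end to conclude $f(\lambda)\in F(\lambda)$, and since that hypothesis holds $\mu$-a.e., the conclusion holds $\mu$-a.e., as required.
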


\section{A result on BSDEs}
The following proposition summarizes the standard wellposedness and comparison principle of BSDEs, whose proofs can be found in \cite{Hu_2002,El-Karoui-Peng-Quenez-2001,ParPeng_90}.

\begin{prop}\label{prop-BSDE-comp}
Assume 
\\
(a) $g:\Omega\times[0,T]\times\bR\times\bR^m \rightarrow \bR$  {is}
$\sP\otimes\cB(\bR)\otimes\cB(\bR^m) \text{-measurable}$ with $g(\cdot,0,0)\in\cL^1(\bR)\cap L^2(\Omega; L^1(0,T;\bR))$;
\\
(b) there exists $\tilde L_0\geq 0$ such that for almost all $(\omega,t)\in \Omega\times [0,T]$, $y_1,y_2\in\bR$ and $z_1,z_2\in\bR^m$,
$$
|g(t,y_1,z_1)-g(t,y_2,z_2)|\leq \tilde L_0\left( |y_1-y_2| + |z_1-z_2|   \right).
$$

\noindent For the following BSDE:
\begin{align}
y_t=\xi+\int_t^T g(s,y_s,z_s)\,ds -\int_t^Tz_s\,dW_s, \quad t\in[0,T],  \label{BSDE-g}
\end{align}
we have the following assertions hold.
\\
(i) For each $\xi\in L^2(\Omega,\sF_T;\bR)$, BSDE \eqref{BSDE-g} admits a unique solution $(y,z)\in\cS^2(\bR)\times \cL^2(\bR^m)$ with
$$\|y\|^2_{\mathcal{S}^2(\mathbb{R})}+\|y\|^2_{\mathcal{L}^2(\mathbb{R})} +\|z\|^2_{\mathcal{L}^2(\mathbb{R}^m)}\leq C(T,\tilde L_0)E\left[|\xi|^2+\left(\int_0^T|g(t,0,0)|dt\right)^2\right].$$
\\
(ii) Given two coefficients $g_1$ and $g_2$ satisfying (a) and (b) and two terminal values $\xi_1,\xi_2\in L^2(\Omega,\sF_T;\bR)$, denote by $(y^1,z^1)$ and $(y^1,z^2)$ the solution of BSDE associated with data $(g_1,\xi_1)$ and $(g_2,\xi_2)$ respectively. It holds that:
if $\xi_1\leq \xi_2$ and $g_1(t,y^2_t,z^2_t)\leq g_2(t,y^2_t,z^2_t)$ a.s. for all $t\in[0,T]$, then $y^1_t\leq y^2_t$, a.s. for all $t\in[0,T]$; if we have further $\bP(\xi_1<\xi_2)>0$, then $\bP(y^1_t<y^2_t)>0$, for all $t\in[0,T]$.
\\
(iii) Let 
$$
g_i(t,y_t^i,z_t^i)=g(t,y_t^i,z_t^i) + h_i(t), \quad\text{for almost all }(\omega,t)\in \Omega\times [0,T],\quad i=1,2,
$$
with $h_i\in \cL^1(\bR)\cap L^2(\Omega; L^1(0,T;\bR))$, $i=1,2$. For $\xi_1,\xi_2\in L^2(\Omega,\sF_T;\bR) $, letting $(y^1,z^1)$ and $(y^2,z^2)$ be the solution of BSDE \eqref{BSDE-g} associated with $(g_1,\xi_1)$ and $(g_2,\xi_2)$ respectively, we have for all $t\in[0,T]$,
\begin{align*}
&\left|y^1_t-y^2_t \right|^2
	+ E_{\sF_t} \int_t^T \left( \left|y^1_s-y^2_s\right|^2+ \left|z^1_s-z^2_s\right|^2   \right)ds
\\
&\leq 
	C_0 \left\{
		E_{\sF_t}\left[  \left|\xi_1-\xi_2\right|^2    
			+\left(\int_t^T\left|h_1(s)-h_2(s)\right|\,ds\right)^2
		\right]
\right\}\quad \text{a.s.},
\end{align*}
with the constant $C_0$ depending only on $\tilde L_0$ and $T$.
\end{prop}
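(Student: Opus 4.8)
The plan is to follow the classical route for Lipschitz BSDEs, treating the three assertions in turn; all three rest on Itô's formula combined with the Lipschitz bound (b), as in \cite{Hu_2002,El-Karoui-Peng-Quenez-2001,ParPeng_90}. For the existence and uniqueness in (i), I would set up a contraction mapping on $\cL^2(\bR)\times\cL^2(\bR^m)$ equipped with the weighted norm $\|(y,z)\|_\beta^2:=E\int_0^T e^{\beta s}\left(|y_s|^2+|z_s|^2\right)\,ds$ for a parameter $\beta>0$ to be fixed. Given an input pair $(U,V)$ in this space, assumption (a) together with (b) guarantees that $\xi+\int_0^T g(s,U_s,V_s)\,ds\in L^2(\Omega,\sF_T;\bR)$, so the martingale $M_t:=E_{\sF_t}\left[\xi+\int_0^T g(s,U_s,V_s)\,ds\right]$ is square-integrable and, by the martingale representation theorem, admits $M_t=M_0+\int_0^t z_s\,dW_s$ with $z\in\cL^2(\bR^m)$. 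Setting $y_t:=M_t-\int_0^t g(s,U_s,V_s)\,ds$ defines a map $\Gamma:(U,V)\mapsto(y,z)$ whose fixed points are exactly the solutions of the BSDE. First I would apply Itô's formula to $e^{\beta t}|y_t^1-y_t^2|^2$ for two inputs, invoke (b) with Young's inequality $2ab\le\eps a^2+\eps^{-1}b^2$, and choose $\beta$ large in terms of $\tilde L_0$ so that $\Gamma$ becomes a strict contraction; Banach's theorem then gives a unique solution. The a priori estimate follows from the same Itô computation applied to the solution against the zero process, with the Burkholder--Davis--Gundy inequality used to upgrade the $\cL^2$-bound on $y$ to the $\cS^2$-bound and Gronwall's lemma producing the constant $C(T,\tilde L_0)$.

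For the comparison principle in (ii), I would linearize the driver along the two solutions. Writing $\hat y=y^1-y^2$ and $\hat z=z^1-z^2$, the Lipschitz property (b) lets me introduce bounded adapted processes $a$ (scalar) and $b$ ($\bR^m$-valued) with $|a|,|b|\le\tilde L_0$ such that
$$g_1(s,y_s^1,z_s^1)-g_1(s,y_s^2,z_s^2)=a_s\hat y_s+b_s\cdot\hat z_s,$$
so $\hat y$ solves a linear BSDE with terminal value $\xi_1-\xi_2\le0$ and inhomogeneous term $\Delta_s:=g_1(s,y_s^2,z_s^2)-g_2(s,y_s^2,z_s^2)\le0$. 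Introducing the strictly positive adjoint process $\Gamma_{t,s}=\exp\left(\int_t^s b_r\,dW_r+\int_t^s(a_r-\tfrac12|b_r|^2)\,dr\right)$ and applying Itô's formula to $\Gamma_{t,s}\hat y_s$, the linear terms cancel and I obtain the representation
$$\hat y_t=E_{\sF_t}\left[\Gamma_{t,T}(\xi_1-\xi_2)+\int_t^T\Gamma_{t,s}\bigl(g_1(s,y_s^2,z_s^2)-g_2(s,y_s^2,z_s^2)\bigr)\,ds\right]\le0,\quad\text{a.s.}$$
Since $\Gamma_{t,s}>0$ almost surely, the additional hypothesis $\bP(\xi_1<\xi_2)>0$ forces $\bP(y_t^1<y_t^2)>0$ for each $t$.

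The stability estimate in (iii) uses the same comparison setup with $g_i(t,y,z)=g(t,y,z)+h_i(t)$: I would apply Itô's formula to $|\hat y_t|^2$, use the Lipschitz bound on $g$ and Young's inequality to absorb the quadratic term in $\hat z$, take $E_{\sF_t}[\cdot]$, and close with Gronwall to get the stated constant $C_0=C_0(\tilde L_0,T)$, controlling $\hat y$, $\int|\hat y|^2$ and $\int|\hat z|^2$ simultaneously by $E_{\sF_t}\bigl[|\xi_1-\xi_2|^2+(\int_t^T|h_1-h_2|\,ds)^2\bigr]$. The main obstacle is bookkeeping rather than conceptual: one must calibrate $\beta$ (respectively the Young parameter $\eps$) so that $\tilde L_0$ is strictly dominated, ensuring both the contraction and that all constants depend only on $(\tilde L_0,T)$, and one must justify the measurable construction of $a,b$ (via difference quotients, set to zero where $\hat y,\hat z$ vanish) together with the fact that the adjoint $\Gamma_{t,s}$ has moments of every order under the boundedness of $a,b$, so that the conditional-expectation representation in (ii) is rigorous.
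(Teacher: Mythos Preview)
The paper does not actually prove this proposition: it simply states that the results are standard and refers the reader to \cite{Hu_2002,El-Karoui-Peng-Quenez-2001,ParPeng_90} for the proofs. Your sketch is precisely the classical argument contained in those references---the weighted-norm contraction for (i), the linearization with adjoint process $\Gamma_{t,s}$ for (ii), and the It\^o/Young/Gronwall computation for (iii)---so your proposal is correct and in full agreement with what the paper invokes.
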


%

\section{Proofs of Lemmas \ref{lemdifferenceofY} and \ref{estimateYZ}}
\begin{proof}[Proof of Lemma \ref{lemdifferenceofY}]
Note that $(T_0,\delta) \in (0,T)\times (0,1)$. For all $p\geq 1$, Lemma \ref{lem-SDE} (ii) combined with the estimate
\begin{equation*}
\begin{split}
E_{\sF_\tau}\left[\sup_{\tau\leq s\leq \tau+\delta \wedge T_0}|X_s^{\tau,\xi;\theta,\gamma}-\xi|^p\right]\leq &\, 2^{p-1}E_{\sF_\tau}\left[\sup_{\tau\leq s\leq \tau+\delta \wedge T_0}\left|\int_\tau^sb(r,X_r^{\tau,\xi;\theta,\gamma},\theta_r,\gamma_r)dr\right|^p\right]\\&+2^{p-1}E_{\sF_\tau}\left[\sup_{\tau\leq s\leq \tau+\delta \wedge T_0}\left|\int_\tau^s\sigma(r,X_r^{\tau,\xi;\theta,\gamma},\theta_r,\gamma_r)dW_r\right|^p\right]
\end{split}
\end{equation*}
yields that
\begin{equation}\label{controlXminusxi}
E_{\sF_\tau}\left[\sup_{\tau\leq s\leq \tau+\delta \wedge T_0}|X_s^{\tau,\xi;\theta,\gamma}-\xi|^p\right]\leq C (1+ |\xi|^p) \delta^{\frac{p}{2}}, \quad \text{a.s.},
\end{equation}
uniformly in $\theta\in\Theta$ and $\gamma\in\Gamma$.  Using Proposition \ref{prop-BSDE-comp} combined with \eqref{controlXminusxi} and \eqref{Lip-const} further gives
\begin{equation*}\begin{split}
&E_{\sF_\tau}\left[\int_\tau^{\tau+\delta\wedge T_0}\left(|Y_s^{1,\theta,\gamma}-Y_s^{2,\theta,\gamma}|^2+|Z_s^{1,\theta,\gamma}-Z_s^{2,\theta,\gamma}|^2\right)ds\right]\\
 \leq&\, CE_{\sF_\tau}\left[\left( \int_\tau^{\tau+\delta\wedge T_0} \zeta^{\phi}_s|X_s^{\tau,\xi;\theta,\gamma}-\xi|ds \right)^2\right]\\
  \leq &\,C  \delta^{\frac{3}{2}}\cdot   \sqrt{E_{\sF_\tau}\left[ \int_\tau^{\tau+\delta\wedge T_0} \big| \zeta^{\phi}_s\big|^4   ds \right]\cdot  E_{\sF_\tau}\left[\sup_{\tau\leq s\leq \tau+\delta\wedge T_0}|X_s^{\tau,\xi;\theta,\gamma}-\xi|^4\right]}\\
   \leq &\,C(1+|\xi|^2)  \delta^{\frac{5}{2}}\cdot   \left( E_{\sF_\tau}\left[ \int_0^{T_0}\big| \zeta^{\phi}_s\big|^4 ds \right]\right)^{1/2},\quad \text{a.s.}
\end{split}\end{equation*}

\noindent Therefore, 
\begin{equation*}\begin{split}
&\big|Y_\tau^{1,\theta,\gamma}-Y_\tau^{2,\theta,\gamma}\big|=\Big|E_{\sF_\tau}\big[Y_\tau^{1,\theta,\gamma}-Y_\tau^{2,\theta,\gamma}\big]\Big|\\
=
&
	\left|E_{\sF_\tau}\Big[\int_\tau^{\tau+\delta \wedge T_0}(F(s,X_s^{\tau,\xi;\theta,\gamma},Y_s^{1,\theta,\gamma},Z_s^{1,\theta,\gamma},\theta_s,\gamma_s)-F(s,\xi,Y_s^{2,\theta,\gamma},Z_s^{2,\theta,\gamma},\theta_s,\gamma_s))ds\Big]\right|
\\
\leq
&\,
	CE_{\sF_\tau}\left[\int_\tau^{\tau+\delta \wedge T_0}\left[\zeta_s^{\phi}|X_s^{\tau,\xi;\theta,\gamma}-\xi|+\big|Y_s^{1,\theta,\gamma}-Y_s^{2,\theta,\gamma}\big|+\big|Z_s^{1,\theta,\gamma}-Z_s^{2,\theta,\gamma}\big|\right]ds\right]
\\
\leq&\,
	C\delta^{\frac{3}{4}}\cdot   \sqrt{E_{\sF_\tau}\left[ \Big( \int_\tau^{\tau+\delta\wedge T_0} \big| \zeta^{\phi}_s\big|^4   ds\Big)^{1/2} \right]\cdot  E_{\sF_\tau}\left[\sup_{\tau\leq s\leq \tau+\delta\wedge T_0}|X_s^{\tau,\xi;\theta,\gamma}-\xi|^2\right]}\\
	\\&+C\delta^{\frac{1}{2}} \left( E_{\sF_\tau}\left[\int_\tau^{\tau+\delta \wedge T_0} \Big(\big|Y_s^{1,\theta,\gamma}-Y_s^{2,\theta,\gamma}\big|^2 + \big|Z_s^{1,\theta,\gamma}-Z_s^{2,\theta,\gamma}\big|^2\Big) ds\right]\right)^{\frac{1}{2}}
\\
\leq &\,\left(\delta^{\frac{5}{4}} +\delta^{\frac{7}{4}}\right) C(1+|\xi|) 
 \left( E_{\sF_\tau}\left[\int_0^{T_0}\big| \zeta^{\phi}_s\big|^4 ds \right]\right)^{1/4},\quad \text{a.s.}
\end{split}\end{equation*}
This completes the proof as $0<\delta<1$. 
\end{proof}

\begin{proof}[Proof of Lemma \ref{estimateYZ}]
Since $F(s,x,\cdot,\cdot,\theta,\gamma)$ has a linear growth in $(y,z)$, uniformly in $(\theta,\gamma)$, Proposition \ref{prop-BSDE-comp} gives the following estimates: for any $s\in[\tau,\tau+\delta \wedge T_0]$, 
\begin{align*}
E_{\sF_s}\left[\int_s^{\tau+\delta \wedge T_0 }\big|Y_t^{2,\theta,\gamma}\big|^2 +   \big|Z_t^{2,\theta,\gamma}\big|^2dt\right]
&
\leq
C E_{\sF_s}\left[\left(\int_s^{\tau+\delta \wedge T_0 }\big|  \zeta^{\phi}_t    \big|dt  \right)^{2}\right]
\\
\text{(by H\"{o}lder's inequality)}\quad &
\leq  C\delta^{\frac{3}{2}} 
	\left(E_{\sF_s}\left[\int_s^{\tau+\delta \wedge T_0 }\big|  \zeta^{\phi}_t   \big|^4 dt \right] \right)^{\frac{1}{2}}
, \text{ a.s.}
\end{align*}
Then, the standard application of H\"{o}lder's inequality implies that 
\begin{equation*}
\begin{split}
&E_{\sF_\tau}\left[\int_\tau^{\tau+\delta  \wedge T_0}\big|Y_t^{2,\theta,\gamma}\big|dt\right]+E_{\sF_\tau}\left[\int_\tau^{\tau+\delta \wedge T_0}\big|Z_t^{2,\theta,\gamma}\big|dt\right]
\\
\leq&\,
\sqrt{\delta}\left(E_{\sF_{\tau}}\left[\int_{\tau}^{\tau+\delta \wedge T_0}\big|Y_t^{2,\theta,\gamma}\big|^2dt\right]\right)^{\frac{1}{2}}
+\sqrt{\delta}\left(E_{\sF_{\tau}}\left[\int_{\tau}^{\tau+\delta \wedge T_0}\big|Z_t^{2,\theta,\gamma}\big|^2dt\right]\right)^{\frac{1}{2}}
\\
\leq&\,C \delta^{\frac{5}{4}} \left( E_{\sF_\tau}\left[\int_0^{T_0}\big|  \zeta^{\phi}_t \big|^4 dt \right]\right) ^{1/4},\quad \text{a.s.}
\end{split}
\end{equation*}
Hence, the desired estimate is obtained and the proof for $(Y^0, Z^0)$ follows in a similar way. 
\end{proof}

\end{appendix}

\section*{Acknowledgements}
The authors are very grateful to the editor and the anonymous referees for their very valuable remarks and comments which have made it possible to improve our paper. J. Qiu would also thank Professor Hongjie Dong from Brown University and Jianfeng Zhang from the University of Southern California for helpful communications about classical solutions of deterministic Hamilton-Jacobi-Bellman-Isaacs equations.

\bibliographystyle{siam}

\end{document}